\newcommand\N{\mathbb{N}}
\newcommand\K{\mathfrak{K}}
\renewcommand\L{\mathfrak{L}}
\newcommand\w{\omega}
\renewcommand\O{\mathcal{O}}
\newcommand\J{\mathcal{J}}
\newcommand\dol{\downarrow\!\!}
\DeclareMathOperator{\dom}{dom}
\DeclareMathOperator{\Dom}{Dom}
\DeclareMathOperator{\cod}{cod}
\DeclareMathOperator{\rng}{rng}
\DeclareMathOperator{\id}{id}
\newtheorem{thm}{Theorem}[section]
\newtheorem{cor}[thm]{Corrolary}
\newtheorem{lem}[thm]{Lemma}
\newtheorem{prop}[thm]{Proposition}
\theoremstyle{definition}
\newtheorem{Def}[thm]{Definition}
\newtheorem{rem}[thm]{Remark}
\newtheorem{exa}[thm]{Example}
\newtheorem{obs}[thm]{Observation}
\DeclarePairedDelimiter\abs{\lvert}{\rvert}
\DeclarePairedDelimiter\norm{\lVert}{\rVert}
\let\oldabs\abs
\def\abs{\@ifstar{\oldabs}{\oldabs*}}
\let\oldnorm\norm
\def\norm{\@ifstar{\oldnorm}{\oldnorm*}}
\newcommand\Fraisse{Fra\"\i ss\'e}
\author{Szymon Głąb, Michał Pawlikowski}
\title{An inverse Fraïssé limit for finite posets and duality
for posets and lattices}
\date{ }
\providecommand{\keywords}[1]
{
  \small	
  \textit{Key words and phrases.} #1
}
\providecommand{\subjclass}[1]
{
  \small	
  2020 \textit{Mathematics Subject Classification. } #1
}
\begin{document}
\maketitle

\begin{abstract}
We consider a category of all finite partial orderings with quotient maps as arrows and construct a \text{\Fraisse} sequence in this category. Then we use commonly known relations between partial orders and lattices to construct a sequence of lattices associated with it. Each of these two sequences has a limit object -- an inverse limit, which is an object of our interest as well.

In the first chapter there are some preliminaries considering partial orders, lattices, topology, inverse limits, category theory and \text{\Fraisse} theory, which are used later.
In the second chapter there are our results considering a \text{\Fraisse} sequence in category of finite posets with quotient maps and properties of inverse limit of this sequence.
In the third chapter we investigate connections between posets and order ideals corresponding to them, getting an inductive sequence made of these ideals; then we study properties of the inverse limit of this sequence. 
\end{abstract}

\keywords{partially ordered set, \Fraisse{} sequence, inverse \Fraisse{} limit, inverse limit, order ideals, distributive lattice}

\subjclass{Primary: 06A06; Secondary: 18B35, 06D05}

\tableofcontents

\section{Introduction}
\text{\Fraisse} theory has its beginning in the Georg Cantor's work, where Cantor has shown that any countable, dense linear ordering without endpoints is isomorphic to $(\mathbb{Q},\leq)$. In the 1950s Roland \text{\Fraisse} expanded on the idea of Cantor and formulated it in the model theory framework \cite{fraisse}, which marked the beginning of a theory named after him. Some examples of \text{\Fraisse} limits are well-known objects, such as
$(\mathbb{Q},\leq)$ -- a limit of all linear orderings; the random graph -- a limit of all finite graphs; and some are completely new, like for example a limit of all finite partially ordered sets, called the random poset, as well as many more countable, ultrahomogeneous structures. In late 1970s James Schmerl in \cite{schmerl} characterized all possible countable ultrahomogeneous posets. Up to isomorphism, there are only countably many ultrahomogeneous countable posets, each of one of four types. Since ultrahomogeneous countable structures are \text{\Fraisse} limits of a finite structures family of the same type (this family is called age of the structure), Schmerl characterized all possible \text{\Fraisse} limits for finite posets.   

Due to Birkhoff Theorem (Theorem \ref{Birkhoff's_Theorem}) there is duality between finite posets and finite distributive lattices. Therefore one could expect the similar characterization to that of Schmerl for lattices. However Abogatma and Truss in \cite{AT} proved that there are continuum many pairwise non-isomorphic ultrahomogeneous lattices.  On the other hand if one demand families of finite lattices to be varieties, then by result \cite{DJ} of Day and Je\v{z}ek there are only three possible non-isomorphic \text{\Fraisse} limits.  

In XXI century new variants of \text{\Fraisse} theory were introduced by Sławomir Solecki, Trevor Irwin \cite{IS} and Wiesław Kubiś \cite{kubis2014,kubis}, and then further developed by other mathematicians. Limit objects in these theories can be uncountable and have found remarkable applications in topology and functional analysis.

In this article we work in generalised \text{\Fraisse} theory formulated by Wiesław Kubiś \cite{kubis2014} which uses category-theoretic approach. In this approach classical \text{\Fraisse} limit for posets is a direct limit of finite posets with embeddings. Here we are studying finite posets with \textbf{quotients}. Then the limit object is an uncountable poset with a natural topological structure. It turns out that this is the Cantor space with the extremely simple order structure, namely there are two kind of points: isolated or comparable with only one other point. The motivation for our investigation comes from \cite{GGK}, where the authors studied inverse \text{\Fraisse} limit for graphs.  

As we have mentioned finite lattices are dual to finite posets. Therefore we are also interested in finding dual limit to that of posets.  

The paper is organized as follows. In Section \ref{Section_Preliminaries} we present basic definitions and facts on posets, lattices and inverse limits, and gives an introduction to the categorical \text{\Fraisse} theory. In Section \ref{Section_Category_of_finite_posets} we build the model of the inverse \text{\Fraisse} limit for finite posets $\mathbb{P}$, and its properties. In Section \ref{Section_The_inverse_system_of_order_ideals} using duality between posets and lattices, having $\langle\{P_n\}_{n \in \N},\{p^n_k\}_{k<n}\rangle$  an inductive sequence consisting of finite posets with quotient maps and its limit $P=\varprojlim \langle\{P_n\}_{n \in \N},\{p^n_k\}_{k<n}\rangle$, we investigate an inverse limit $\mathbb{O}(P)$ of order ideals $\O(P_n)$. We show that $\mathbb{O}(P)$ is a $\sigma$-complete atomic lattice, and in Theorem \ref{InducedQuotientMapping} we show the connection between $\mathbb{P}$ and $\mathbb{O}(\mathbb{P})$. In Section \ref{Section_last} we define category of finite lattice in which $\mathbb{O}(\mathbb{P})$ is an inverse  \text{\Fraisse} limit, and present a different way of looking on order ideals $\O(P_n)$ and induced quotients $\hat{p}^n_k$.   

Definitions and facts concerning posets and lattices are taken from Davey and Priestley monograph \cite{priestley}.  Awodey's book \cite{awodey} and Kubiś's notes \cite{kubis} were used to gather the information on category theory and \text{\Fraisse} theory. Some topological facts are taken from Srivastava's monograph \cite{srivastava}.

\section{Preliminaries}\label{Section_Preliminaries}

We will use $\mathbb{N}$ to denote the set of natural numbers starting from $1$ and we will use $\w$ to denote the set of natural numbers starting with $0$.
\subsection{Definitions and facts concerning posets}
In this section we present terminology and notation that we are using in the sequel. Most of them come from \cite{priestley}. 

Let $A$ be a set and $\leq$ be a binary relation on $A$ that is reflexive, transitive and antisymmetric. We say that structure $(A,\leq)$ is a \emph{poset}.
For $x, y \in A$ if neither $x\leq y$ nor $y\leq x$, then we say that $x$ and $y$ are incomparable, in symbols $x\parallel y$. 
We say that a poset $(A,\leq)$ is a \emph{chain}, if for all $x,y \in A$, $x\leq y$ or $y \leq x$. If $A=\{1,\dots,n\}$ is considered with the natural order, then we denote such chain by $\mathbf{n}$.
We say that a poset $(A,\leq)$ is an \emph{antichain}, if for all $x,y \in A$, $x\leq y \Leftrightarrow x=y$. If $A=\{1,\dots,n\}$ is ordered by equality, then we denote such antichain by $\overline{\mathbf{n}}$.

Let $(A;\leq_{A}),(B;\leq_{B})$ be posets and $\phi$ a mapping $\phi:A \rightarrow B$. If for all $x,y \in A$ such that $x \leq_{A} y$, $\phi(x) \leq_{B} \phi(y)$ then we call $\phi$ an \emph{order homomorphism} (we will often abbreviate it to \emph{homomorphism}). If $\phi(x) \leq_B \phi(y) \iff x \leq_A y$ for all $x,y \in A$, then we say that $\phi$ is an \emph{order-embedding}. Furthermore, if it is onto, we say that it is an \emph{order-isomorphism}. We say that a homomorphism $\phi$ is a \emph{quotient map} if it is onto and for every $p,r \in B$ such that $p\leq_{B}r$ there are $x,y \in A$ such that $\phi(x)=p,\phi(y)=r$ and $x \leq_{A} y$.

Let $(A;\leq_{A}),(B;\leq_{B})$ be disjoint posets. We define the \emph{linear sum} $A \oplus B$ by taking the following order relation on $A \cup B: x \leq y$ if and only if one of the following conditions is met
\begin{enumerate}
\item $x,y \in A$ and $x\leq_{A}y$
\item $x,y \in B$ and $x\leq_{B}y$
\item $x \in A$ and $y \in B$
\end{enumerate}
The \emph{disjoint union} $A \dot \cup B$ is the ordered set $A \cup B$ formed by defining $x \leq y$ if and only if one of the following conditions is met
\begin{enumerate}
\item $x,y \in A$ and $x \leq_{A} y$
\item $x,y \in B$ and $x \leq_{B} y$
\end{enumerate}
If $A$ and $B$ are not disjoint, we can define their disjoint copies, for example $A\times\{0\}$ and $B\times\{1\}$, endow them with order structures isomorphic to the original ones, and then take their linear sum and disjoint union. Having this in mind we will write, for example, $\mathbf{2}\oplus\mathbf{2}=\mathbf{4}$ and $\overline{\mathbf{2}}\dot\cup\overline{\mathbf{2}}=\overline{\mathbf{4}}$  which is true up to isomoprhism.

Let $P$ be a poset and $Q \subset P$. We say that $Q$ is a \emph{down-set} if for every $q \in Q$ and for every $p \in P$ such that $p \leq q$ we have that $p \in Q$. By $\dol p$ we shall denote the following set $\lbrace m \in P: m \leq p \rbrace$. We say that it is a \emph{principal} down-set.
It is easy to see that $\dol p$ is a down-set for every $p \in P$.
By $\O(P)$ we denote a partial ordering of all down-sets of $P$ with $\subset$ as an order relation. Set $\O(P)$ is called an \emph{order ideal}.
Let $S \subset P$. An element $x \in P$ is called an \emph{upper bound} of $S$ if for every $y \in S$ we have $y \leq x$. Similarly $x$ is the \emph{lower bound} of $S$ if $y \geq x$ for every $y \in S$. The \emph{supremum} of $S$ is the least upper bound of $S$ and we denote it by $\sup(S)$. Similarly the \emph{infimum} of $S$ is the greatest lower bound of $S$ and we denote it by $\inf(S)$.
Supremum and infimum of $S$ may not exist even for finite sets. However, if they exist, they must be unique.

Let $P$ be a poset. We say that $P$ is a \emph{lattice} if for every $x, y \in P$ $\sup (\lbrace x, y \rbrace) \text{ and } \inf (\lbrace x, y \rbrace)$ exist. Usually we write $\sup( \lbrace x, y \rbrace)$, $\inf (\lbrace x, y \rbrace)$ as $x \vee y$, $x \wedge y$ respectively.

Supremum and infimum are associative, i.e. $x\lor(y\lor z)=(x\lor y)\lor z$ and $x\land(y\land z)=(x\land y)\land z$, and commutative. If a lattice has the minimal element, then we call it as zero element and denote it by 0. Similarly the maximal element is denoted by 1. Note that every finite lattice has both minimal and maximal elements. 

Let $L$ be a lattice. We say that $L$ is \emph{distributive} if for all $x,y,z \in L$ the following is true:
\begin{equation}
x \wedge (y \vee z)=(x \wedge y) \vee (x \wedge z)
\end{equation}

Let $P$ be a poset. It is easy to see that $\O(P)$ is a lattice with $\sup\lbrace x, y \rbrace= x \cup y$ and $\inf \lbrace x, y \rbrace= x \cap y$.
\begin{lem} \cite[p. 46]{priestley}\label{LemmaOnSupremumOfUnions}
	Let $\{A_i\}_{i \in I}$ be a family of sets contained in a lattice $L$, such that for every $i \in I$, $\sup(A_i)$ exists and $\sup(\bigcup_{i \in I} A_i)$ exists. Then $\bigvee_{i \in I} \sup(A_i):=\sup\{\sup(A_i):i\in I\}$ exists and $\sup(\bigcup_{i \in I} A_i)=\bigvee_{i \in I} \sup(A_i)$.
\end{lem}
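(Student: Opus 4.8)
The plan is to reduce the statement to a single clean observation: an element of $L$ is an upper bound of $\bigcup_{i \in I} A_i$ if and only if it is an upper bound of $\{\sup(A_i):i\in I\}$. Once I know these two subsets of $L$ have exactly the same set of upper bounds, they must share the same least upper bound, and this simultaneously yields the existence of $\bigvee_{i \in I}\sup(A_i)$ and the claimed equality $\sup(\bigcup_{i \in I} A_i)=\bigvee_{i \in I}\sup(A_i)$. So I would not try to construct the right-hand supremum directly; instead I would show that the already-existing element $s:=\sup(\bigcup_{i \in I} A_i)$ serves as the least upper bound of $\{\sup(A_i):i\in I\}$.

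First I would fix $s:=\sup(\bigcup_{i \in I} A_i)$, which exists by hypothesis, and check that $s$ is an upper bound of $\{\sup(A_i):i\in I\}$. For each $i\in I$ and each $a\in A_i$ we have $a\in\bigcup_{i \in I} A_i$, hence $a\leq s$; thus $s$ is an upper bound of $A_i$, and since $\sup(A_i)$ is by definition the \emph{least} upper bound of $A_i$, we obtain $\sup(A_i)\leq s$. As $i$ was arbitrary, $s$ dominates every $\sup(A_i)$. Next I would verify that $s$ is the least such upper bound. Let $t\in L$ be an arbitrary upper bound of $\{\sup(A_i):i\in I\}$, so $\sup(A_i)\leq t$ for every $i$. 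Given any $a\in\bigcup_{i \in I} A_i$, choose $i$ with $a\in A_i$; then $a\leq\sup(A_i)\leq t$ by transitivity of $\leq$, so $t$ is also an upper bound of $\bigcup_{i \in I} A_i$. By minimality of $s$ among upper bounds of the union, $s\leq t$. Combining the two steps, $s$ is the least upper bound of $\{\sup(A_i):i\in I\}$; hence this supremum exists and equals $s$, which is precisely the assertion.

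There is no real analytic obstacle here, as the argument is a direct unwinding of the definition of least upper bound; the only point requiring care is bookkeeping of the existence hypotheses. Specifically, each $\sup(A_i)$ must be invoked solely in its role as the least upper bound of $A_i$ (used once to pass from ``$s$ bounds $A_i$'' to ``$s\geq\sup(A_i)$''), while the existence of $\bigvee_{i \in I}\sup(A_i)$ is a \emph{conclusion} and never assumed. Transitivity of $\leq$ is what makes the two upper-bound conditions interchangeable, and antisymmetry is implicitly what guarantees the common least upper bound is a single well-defined element.
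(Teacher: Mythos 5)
Your proof is correct: the paper itself gives no proof of this lemma (it is quoted from Davey--Priestley, p.~46), and your upper-bound argument --- showing that $\bigcup_{i\in I}A_i$ and $\{\sup(A_i):i\in I\}$ have the same upper bounds, so that $s=\sup(\bigcup_{i\in I}A_i)$ is the least upper bound of the set of suprema --- is exactly the standard argument in that reference. The existence hypotheses are invoked in the right places and nothing is missing; note also that your proof never uses the lattice structure, so it establishes the statement for arbitrary posets.
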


Let $L$ be a lattice. An element $x \in L$ is \emph{join-irreducible} if the following conditions are met
\begin{enumerate}
\item[(i)] $x \neq 0$ (in case $L$ has a zero)
\item[(ii)]$x=a \vee b$ implies $x=a$ or $x=b$ for all $a, b \in L$.
\end{enumerate}
We denote the set of all join-irreducible elements of $L$ by $\J(L)$. This set inherits $L$'s order relation.

The simple example of elements which are join-irreducible and those which are not is given after the Definition \ref{definitionAtom}. Now we present a relation between $\leq$ order on $P$ and $\subset$ on $\O(P)$.
\begin{lem}\cite[p. 21]{priestley}\label{DownsetLemma1}
Let $P$ be a poset and $x, y \in P$. The following statements are equivalent:
\begin{enumerate}
\item[\emph{(i)}] $x \leq y$
\item[\emph{(ii)}] $\dol x \subset \dol y$;
\item[\emph{(iii)}] For every $Q \in \O(P)$ if $y \in Q$ then $x \in Q$.
\end{enumerate}
\end{lem}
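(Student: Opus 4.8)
The plan is to prove the three statements equivalent by establishing the cycle of implications (i) $\Rightarrow$ (ii) $\Rightarrow$ (iii) $\Rightarrow$ (i). Each link follows by unwinding exactly one definition -- that of a principal down-set, of a down-set, and of the order ideal $\O(P)$ -- together with the reflexivity and transitivity of $\leq$. Throughout I would keep in mind the paper's convention that $\subset$ means $\subseteq$, so none of the inclusions below need be proper.

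For (i) $\Rightarrow$ (ii) I would assume $x \leq y$ and take an arbitrary $m \in \dol x$, so that $m \leq x$; transitivity then gives $m \leq y$, hence $m \in \dol y$, and therefore $\dol x \subset \dol y$.

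For (ii) $\Rightarrow$ (iii) I would fix any $Q \in \O(P)$ with $y \in Q$. Since $Q$ is a down-set containing $y$, every element below $y$ lies in $Q$, i.e. $\dol y \subset Q$. Combining this with the hypothesis $\dol x \subset \dol y$ and the fact that $x \in \dol x$ (by reflexivity), I conclude $x \in Q$, which is precisely what (iii) demands.

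The one step that calls for an actual idea rather than a routine unwinding is (iii) $\Rightarrow$ (i), where the task is to manufacture a witnessing down-set. The natural -- and, I expect, the only workable -- choice is $Q := \dol y$, which is a down-set by the observation recorded just before the lemma and contains $y$ by reflexivity. Feeding this particular $Q$ into (iii) yields $x \in \dol y$, that is $x \leq y$. I anticipate this final implication to be the crux of the argument, not because it is technically hard but because it is the point at which the universally quantified condition (iii) must be specialised to the right instance; everything else is bookkeeping.
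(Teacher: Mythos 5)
Your proof is correct: the paper itself gives no proof of this lemma (it is cited from Davey and Priestley), and your cycle (i) $\Rightarrow$ (ii) $\Rightarrow$ (iii) $\Rightarrow$ (i) --- with transitivity for the first link, the down-set property plus reflexivity for the second, and the specialisation $Q := \dol y$ for the third --- is exactly the standard argument in that source. Nothing is missing.
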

Let $L$ and $K$ be lattices. A map $f:L \to K$ is said to be a \emph{lattice homomorphism} if $f$ is join-preserving and meet-preserving, that is, for all $a,b \in L$,
$$ f(a \vee b)=f(a) \vee f(b) \text{ and } f(a \wedge b)=f(a) \wedge f(b).$$
A bijective homomorphism is a lattice isomorphism.
\begin{lem}\cite[p. 44]{priestley}\label{Lattice_And_Order_Isomorphisms}
Let $L$ an $K$ be lattices and $f:L \to K$ a map. Then $f$ is a lattice isomorphism if and only if it is an order-isomorphism.
\end{lem}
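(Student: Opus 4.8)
The plan is to prove both implications by translating between the lattice operations and the order relation via the standard fact that in any lattice $x \leq y$ holds if and only if $x \vee y = y$ (equivalently $x \wedge y = x$). This identity is exactly what lets one pass back and forth between ``$f$ preserves $\vee$ and $\wedge$'' and ``$f$ preserves and reflects $\leq$'', so I would state it once and invoke it repeatedly.

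First I would treat the forward direction: assume $f$ is a lattice isomorphism, so $f$ is a bijection preserving both operations, and show it is an order-isomorphism. Since $f$ is already onto, it suffices to verify the order-embedding condition $x \leq_L y \iff f(x) \leq_K f(y)$. If $x \leq_L y$, then $x \vee y = y$, hence $f(y) = f(x \vee y) = f(x) \vee f(y)$, which gives $f(x) \leq_K f(y)$. Conversely, if $f(x) \leq_K f(y)$, then $f(x) \vee f(y) = f(y)$, so $f(x \vee y) = f(y)$, and injectivity of $f$ yields $x \vee y = y$, i.e. $x \leq_L y$. Thus $f$ is an order-embedding onto $K$, hence an order-isomorphism.

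For the converse I would assume $f$ is an order-isomorphism and prove it preserves joins, the argument for meets being dual. First note $f$ is injective: $f(x) = f(y)$ forces both $f(x) \leq_K f(y)$ and $f(y) \leq_K f(x)$, hence $x \leq_L y$ and $y \leq_L x$, so $x = y$ by antisymmetry; together with surjectivity this makes $f$ a bijection. To see $f(x \vee y) = f(x) \vee f(y)$, observe that $x, y \leq_L x \vee y$ gives, by monotonicity, $f(x), f(y) \leq_K f(x \vee y)$, so $f(x) \vee f(y) \leq_K f(x \vee y)$. For the reverse inequality, use surjectivity to write $f(x) \vee f(y) = f(w)$ for some $w \in L$; then $f(x) \leq_K f(w)$ and $f(y) \leq_K f(w)$, and the reflecting property of the order-embedding gives $x \leq_L w$ and $y \leq_L w$, whence $x \vee y \leq_L w$ and therefore $f(x \vee y) \leq_K f(w) = f(x) \vee f(y)$. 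Antisymmetry in $K$ then yields equality.

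The only genuine subtlety, and the step I would be most careful with, is the reverse inequality in the converse direction: one cannot obtain it by monotonicity alone, but must exploit both surjectivity (to realize $f(x) \vee f(y)$ as an image $f(w)$) and the fact that an order-embedding \emph{reflects} $\leq$ (to transport $f(x), f(y) \leq_K f(w)$ back to $x, y \leq_L w$). Everything else is a routine application of the characterization $x \leq y \iff x \vee y = y$ together with its dual.
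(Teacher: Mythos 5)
Your proof is correct: both directions are sound, and the key step you flag (using surjectivity plus order-reflection to get $f(x\vee y)\leq f(x)\vee f(y)$) is handled properly. The paper itself gives no proof of this lemma --- it is quoted from Davey and Priestley --- and your argument via the connecting identity $x\leq y \iff x\vee y=y$ is essentially the standard one found in that reference, so there is nothing to add.
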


\begin{lem}\label{DownsetLemma2}
Let $P$ be a finite poset and $Q \subset P$ be a down-set. Then $Q= \bigcup_{k=1} ^n  \dol x_k$ where $x_1, \ldots, x_n$ are pairwise distinct maximal elements of $Q$ (note that $Q$ has maximal elements, as it is a finite poset). Furthermore it is the only representation of $Q$ as a union of pairwise incomparable principal down-sets. We shall call the set $\{\dol x_k:k \leq n\}$ a canonical decomposition of $Q$.
\end{lem}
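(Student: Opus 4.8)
The plan is to prove existence and uniqueness separately. For existence, I would let $\{x_1,\dots,x_n\}$ denote the set of maximal elements of $Q$, which is nonempty and finite because $Q\subset P$ is finite. The inclusion $\bigcup_{k=1}^n \dol x_k \subseteq Q$ is immediate: each $x_k\in Q$ and $Q$ is a down-set, so $\dol x_k\subseteq Q$. For the reverse inclusion I would fix an arbitrary $q\in Q$ and invoke the standard fact that in a finite poset every element lies below some maximal element; applying this within $Q$ yields a maximal element $x_k$ of $Q$ with $q\leq x_k$, hence $q\in \dol x_k$. This gives $Q=\bigcup_{k=1}^n \dol x_k$. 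That the generators $x_k$ are pairwise incomparable is automatic, since two comparable maximal elements of $Q$ must coincide; by Lemma \ref{DownsetLemma1} this is exactly the statement that the principal down-sets $\dol x_k$ are pairwise incomparable under $\subseteq$.

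For uniqueness, suppose $Q=\bigcup_{j=1}^m \dol y_j$ is any representation with $\dol y_1,\dots,\dol y_m$ pairwise incomparable, equivalently (again by Lemma \ref{DownsetLemma1}) with $y_1,\dots,y_m$ pairwise incomparable in $P$. I would first show each $y_j$ is a maximal element of $Q$: clearly $y_j\in Q$, and were there some $q\in Q$ with $y_j<q$, then from $q\in Q=\bigcup_j \dol y_j$ we would get $q\leq y_k$ for some $k$, whence $y_j<y_k$, contradicting incomparability. Thus $\{y_1,\dots,y_m\}\subseteq\{x_1,\dots,x_n\}$. Conversely, each maximal element $x_i$ lies in $Q$, so $x_i\leq y_k$ for some $k$; since $y_k\in Q$ and $x_i$ is maximal in $Q$, this forces $x_i=y_k$, giving the reverse inclusion. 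Hence the two antichains of generators agree, and so do the two families of principal down-sets.

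The argument is elementary, and the only step demanding genuine care is the existence of a maximal element above any given point, which is precisely where finiteness of $P$ (equivalently of $Q$) is used and without which the statement fails. I therefore expect no real obstacle beyond this bookkeeping; the translation between incomparability of the down-sets and of their generators is handled cleanly by Lemma \ref{DownsetLemma1}.
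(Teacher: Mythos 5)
Your proof is correct, and the existence half is essentially identical to the paper's: both take the maximal elements of $Q$, get $\bigcup_k \dol x_k \subseteq Q$ from the down-set property, and get the reverse inclusion from the finiteness fact that every element of a finite poset lies below a maximal one. Where you genuinely diverge is the uniqueness half. You characterize the generators of an \emph{arbitrary} pairwise-incomparable representation $Q=\bigcup_j \dol y_j$ outright: incomparability forces each $y_j$ to be maximal in $Q$, and maximality forces each maximal element to equal some $y_k$, so the generator set \emph{is} the set of maximal elements. The paper instead compares the two representations through inclusions of principal down-sets — every $\dol x_k$ lies in some $\dol y_\ell$ and every $\dol y_\ell$ in some $\dol x_p$, giving chains $\dol x_k \subseteq \dol y_\ell \subseteq \dol x_p$ that collapse to $y_\ell = x_k$ — and then needs a separate cardinality step (first $n\leq m$, then ruling out $m>n$) to conclude. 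Your route is more economical and avoids that counting entirely. Worth noting: the second containment in your argument ($x_i\leq y_k$ with $y_k\in Q$ and $x_i$ maximal forces $x_i=y_k$) never uses incomparability of the $y_j$'s, so it actually proves the stronger statement the paper later extracts as Lemma \ref{CanonicalDecompositionLemma}: the canonical decomposition is contained in \emph{any} family of principal down-sets whose union is $Q$. Two trivial cosmetic points: in your maximality step the case $k=j$ gives the contradiction $y_j<y_j$ directly rather than via incomparability, and the set of maximal elements is nonempty only when $Q\neq\emptyset$ (for $Q=\emptyset$ the statement holds with the empty union, a convention the paper also leaves implicit).
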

\begin{proof}
Let $Q$ be a down-set and $x_1, \ldots, x_n$ be a list of all maximal elements of $Q$.
It is easy to see that $\bigcup_{k=1} ^n \dol x_k \subset Q$ because $x_1, \ldots, x_n \in Q$ and $Q$ is a down-set. Now if we take $q \in Q$ such that $q$ is not maximal then, as $Q$ is finite, we will find maximal element $x_i$ such that $q \leq x_i$, so $q \in \bigcup_{k=1} ^n \dol x_k$. So $Q= \bigcup_{k=1} ^n \dol x_k$. What follows from Lemma \ref{DownsetLemma1} and maximality of $x_l$'s is that $\dol x_i \parallel \dol x_j$ for $i \neq j$.
Now we prove that it is the only pairwise incomparable representation of $Q$.

Assume that $Q= \bigcup_{k=1} ^n  \dol x_k = \bigcup_{k=1} ^m  \dol y_k$, with $ \dol y_i \parallel \dol y_j \text{ for } i \neq j$. Clearly $n \leq m$ because otherwise we have that for some $\dol x_i,\dol x_j$ we find $\dol y_p$ such that $\dol x_i \subset \dol y_p,\dol x_j \subset \dol y_p$ so $x_i, x_j$ would not be distinct. The fact that we will find such $\dol y_p$ is because, if $\dol x_k \subset \bigcup_{k=1} ^m \dol y_k$, then $x_k \in \bigcup_{k=1} ^m \dol y_k$, thus $x_k \in \dol y_p$ for some $p \in \{1, \dots, m\}$, so $x_k \leq y_p$. It follows that $\dol x_k \subset \dol y_p$.

Now it is easy to see that for every $\dol x_k$ we find $\dol y_{\ell}$ such that $\dol x_k \subset \dol y_{\ell}$. On the other hand for every $\dol y_{\ell}$ we find $\dol x_p$ such that $\dol y_{\ell} \subset\dol x_p$. So $\dol x_k \subset \dol y_{\ell} \subset \dol x_p$. Clearly $k=p$, since $x_k$ and $x_p$ are comparable. Thus $y_{\ell}=x_k$, and consequently $\{x_1,\dots,x_n\}\subset\{y_1,\dots,y_m\}$. Suppose that $m>n$. Then there is $\ell$ with $y_\ell\notin\{x_1,\dots,x_n\}$. But then by maximality of $x_i$'s, $y_\ell\leq x_j$ for some $j$. This contradicts the assumption that $y_k$'s are pairwise incomparable.  
So $m=n$ and $\lbrace y_1, \ldots, y_n \rbrace = \lbrace x_1, \ldots, x_n \rbrace$.
\end{proof}
From the above proof we extract the following fact.
\begin{lem}\label{CanonicalDecompositionLemma}
Let $P$ be a finite poset, $Q \subset P$ be a down-set. Let $\{\dol x_k: k \leq n \}$ be the canonical decomposition of $Q$ and let $\{\dol y_k: k \leq m \}$ be such that $Q= \bigcup_{k=1} ^m \dol y_k$. Then $\{\dol x_k: k \leq n \} \subset \{\dol y_k: k \leq m \}$.
\end{lem}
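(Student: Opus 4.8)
The plan is to show that every member $\dol x_k$ of the canonical decomposition already occurs in the family $\{\dol y_j : j \leq m\}$; since $k\leq n$ is arbitrary, this immediately yields the claimed inclusion $\{\dol x_k : k \leq n\} \subseteq \{\dol y_j : j \leq m\}$. The entire argument hinges on the maximality of the elements $x_k$ in $Q$, which is precisely the feature that singles out the canonical decomposition among all coverings of $Q$ by principal down-sets. Note in particular that the family $\{\dol y_j : j \leq m\}$ is \emph{not} assumed to be pairwise incomparable here, so we cannot invoke the uniqueness part of Lemma \ref{DownsetLemma2} directly; fortunately this will not be needed.

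First I would fix a maximal element $x_k$ of $Q$, recalling from Lemma \ref{DownsetLemma2} that the canonical decomposition consists exactly of the principal down-sets generated by the maximal elements of $Q$. Since $x_k \in Q = \bigcup_{j=1}^m \dol y_j$, there is an index $j$ with $x_k \in \dol y_j$, i.e.\ $x_k \leq y_j$. On the other hand, each $\dol y_j$ is contained in the union, so $\dol y_j \subseteq Q$, and in particular $y_j \in Q$. Thus $x_k \leq y_j$ with both elements lying in $Q$.

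Now I would invoke maximality: because $x_k$ is maximal in $Q$ and $x_k \leq y_j$ with $y_j \in Q$, we must have $x_k = y_j$. By antisymmetry (equivalently, by Lemma \ref{DownsetLemma1}) this gives $\dol x_k = \dol y_j$, so $\dol x_k \in \{\dol y_j : j \leq m\}$. As $\dol x_k$ was an arbitrary member of the canonical decomposition, the inclusion follows. I do not expect any genuine obstacle: the argument is essentially a one-line maximality observation, and the only point requiring care is to notice that the covering $\{\dol y_j\}$ may contain comparable or redundant sets, so the reasoning must go through each individual maximal element rather than through a uniqueness statement.
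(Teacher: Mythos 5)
Your proof is correct and takes essentially the same approach as the paper's: the lemma is extracted from the proof of Lemma \ref{DownsetLemma2}, where each maximal element $x_k$ is located in some $\dol y_j$ and maximality then forces $x_k = y_j$. Your version is if anything slightly more direct, applying maximality of $x_k$ immediately rather than through the sandwich $\dol x_k \subset \dol y_\ell \subset \dol x_p$ used there.
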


\begin{Def}\cite[p. 113]{priestley}\label{definitionAtom}
	Let $L$ be a lattice with the least element $0$ and $a \in L$. We say that $a$ is an \emph{atom} in $L$, if $0 < a$ and if $x < a$, then $x=0$.
	A lattice is called \emph{atomic}, if for every $x \in L, x \neq 0$ we can find an atom $a$ such that $a \leq x$.
\end{Def}
To illustrate the notion of atom, let us consider the power set $\mathcal{P}(\{0,1\})$ with the inclusion as a an order relation. This is a lattice with the least element $\emptyset$. Then $\{0\}$ and $\{1\}$ are atoms while $\{0,1\}$ is not. Note also that $\{0\}$ and $\{1\}$ are join-irreducible while $\{0,1\}$ is not. 
\begin{lem}\cite[p. 117]{priestley}\label{Lemma_On_Irreducible_Elements}
Let $L$ be a distributive lattice and let $x \in L$, with $x \neq 0$ in case $L$ has a zero. Then the following are equivalent:
\begin{enumerate}
	\item[(i)] $x$ is join-irreducible
	\item[(ii)] if $a,b \in L$ and $x \leq a \vee b$, then $x \leq a$ or $x \leq b$
	\item[(iii)] for any $k \in \N$, if $a_1,\dots, a_k \in L$ and $x \leq a_1 \vee a_2 \vee \dots \vee a_k$, then $x \leq a_i$ for some $i \in \{1, \dots, k\}$.
\end{enumerate}
\end{lem}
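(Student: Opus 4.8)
The plan is to establish the cyclic chain of implications (i) $\Rightarrow$ (ii) $\Rightarrow$ (iii) $\Rightarrow$ (i); once these three are in hand, any two of the conditions are equivalent. Only the first implication uses distributivity in an essential way; the other two are formal consequences of the lattice order and a routine induction.

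For (i) $\Rightarrow$ (ii), suppose $x$ is join-irreducible and $x \leq a \vee b$. The key observation is that $x \leq a \vee b$ is equivalent to $x = x \wedge (a \vee b)$ (the standard connecting lemma between $\leq$ and $\wedge$). Now I would apply distributivity to rewrite the right-hand side, obtaining
$$x = x \wedge (a \vee b) = (x \wedge a) \vee (x \wedge b).$$
Since $x$ is join-irreducible, one of the two joinands equals $x$, say $x = x \wedge a$, which means exactly $x \leq a$; the other case gives $x \leq b$. This is the whole content of the implication, and the distributive law is precisely what lets join-irreducibility ``see through'' the join $a \vee b$.

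For (ii) $\Rightarrow$ (iii), I would induct on $k$. The case $k = 1$ is trivial and $k = 2$ is exactly (ii). For the inductive step, if $x \leq a_1 \vee \dots \vee a_{k+1}$, I group the join as $(a_1 \vee \dots \vee a_k) \vee a_{k+1}$ (using associativity) and apply (ii) with the two summands $a_1 \vee \dots \vee a_k$ and $a_{k+1}$; in the first alternative the inductive hypothesis finishes the job, and in the second we are immediately done. Finally, for (iii) $\Rightarrow$ (i), the hypothesis $x \neq 0$ is assumed, so it remains to check join-irreducibility: if $x = a \vee b$, then in particular $x \leq a \vee b$, so (iii) with $k = 2$ gives $x \leq a$ or $x \leq b$; combining $x \leq a$ with the obvious $a \leq a \vee b = x$ forces $x = a$ (and symmetrically $x = b$ in the other case). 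Since every step is elementary, I do not expect a genuine obstacle here; the only point requiring care is the use of distributivity in (i) $\Rightarrow$ (ii), where one must resist the temptation to prove (ii) in an arbitrary lattice, as it fails without distributivity.
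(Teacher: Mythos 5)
Your proof is correct: the paper itself gives no proof of this lemma (it is quoted from Davey--Priestley, p.~117), and your cyclic argument --- distributivity turning $x = x \wedge (a \vee b)$ into $(x \wedge a) \vee (x \wedge b)$ for (i)$\Rightarrow$(ii), induction for (ii)$\Rightarrow$(iii), and antisymmetry for (iii)$\Rightarrow$(i) --- is exactly the standard argument found in that reference. No gaps.
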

\begin{lem}\cite[p. 116]{priestley}\label{Join_Irreducible_Elements}
Let $P$ be a poset. Then $\J(\O(P))=\{\dol x: x \in P\}$
\end{lem}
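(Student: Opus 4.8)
The plan is to prove the two inclusions separately, starting with the straightforward containment $\{\dol x : x \in P\} \subseteq \J(\O(P))$ and then addressing the reverse one, which is where the real work lies. For the forward inclusion I would fix $x \in P$ and verify the two clauses of join-irreducibility directly. First, $\dol x \neq \emptyset$, since $x \leq x$ gives $x \in \dol x$, so $\dol x$ is not the zero $\emptyset$ of $\O(P)$. Next, suppose $\dol x = A \vee B = A \cup B$ with $A, B \in \O(P)$; since $x \in \dol x$ we get $x \in A$ or $x \in B$, say $x \in A$. As $A$ is a down-set, every $m \leq x$ lies in $A$, that is $\dol x \subseteq A$, while $A \subseteq A \cup B = \dol x$ gives the reverse inclusion, so $A = \dol x$. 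Hence $\dol x$ is join-irreducible. This part uses only reflexivity and the down-set property and requires no finiteness.

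For the reverse inclusion $\J(\O(P)) \subseteq \{\dol x : x \in P\}$, I would take a join-irreducible $Q \in \O(P)$ and aim to exhibit a greatest element of $Q$, since a down-set with a greatest element $x$ is exactly $\dol x$. The natural device is the canonical decomposition of Lemma \ref{DownsetLemma2}: writing $Q = \bigcup_{k=1}^{n} \dol x_k$, with $x_1, \dots, x_n$ the pairwise incomparable maximal elements of $Q$. Reading this as $Q = \dol x_1 \vee (\dol x_2 \vee \cdots \vee \dol x_n)$ and invoking binary join-irreducibility repeatedly, $Q$ must coincide with one of the $\dol x_k$; because the maximal elements are pairwise incomparable, this forces $n = 1$, so $Q = \dol x_1$ is principal.

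The main obstacle is precisely the step that replaces the possibly large down-set $Q$ by a \emph{finite} union of principal down-sets: binary join-irreducibility only collapses finite joins, so one needs $Q$ to be generated by finitely many maximal elements, which is exactly what Lemma \ref{DownsetLemma2} supplies. For a general poset this can fail: for $P = (\N,\leq)$ the down-set $\N$ itself cannot be written as the union of two strictly smaller down-sets (every proper down-set is a finite initial segment), hence it is join-irreducible without being principal. Thus finiteness of $P$ — equivalently, the guarantee that every down-set has a canonical decomposition and a maximum — is essential to the reverse inclusion, and that is where I expect the genuine content of the lemma to sit; the remaining manipulations are routine applications of the definition of $\vee$ in $\O(P)$ as union.
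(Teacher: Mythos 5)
The paper does not prove this lemma at all --- it is imported from \cite[p.~116]{priestley} as a black box --- so there is no internal proof to compare yours against; judged on its own merits, your argument is correct and is essentially the textbook one. The forward inclusion is fine and, as you note, needs no finiteness: $x\in\dol x$ by reflexivity, and if $\dol x=A\cup B$ then $x\in A$ (say) forces $\dol x\subseteq A\subseteq\dol x$. The reverse inclusion, via the canonical decomposition of Lemma \ref{DownsetLemma2} together with repeated use of binary join-irreducibility, is also sound. The most valuable part of your write-up is the observation that the statement as printed (``Let $P$ be a poset'') is actually false without a finiteness hypothesis: your counterexample works, since in $\O(\N)$ (with the natural order) every proper non-empty down-set is a finite initial segment $\dol n$, so $\N$ itself cannot be written as a union of two proper down-sets and is therefore join-irreducible but not principal. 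Finiteness of $P$ --- which is exactly what makes Lemma \ref{DownsetLemma2} available --- is the missing hypothesis; the cited source states the result for finite ordered sets, and every application in the paper (the posets $P_n$, the sets $\J(L)$ for finite distributive lattices $L$ in Section \ref{Section_last}) is to finite posets, so nothing downstream is affected, but the lemma's statement should say ``finite poset.''
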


\begin{lem}\cite[p. 116]{priestley}\label{Isomorphism_Of_P_And_PD}
Let $P$ be a poset and $\phi:P \to \J(\O(P))$ be defined as $\phi(x)=\dol x$. Then $\phi$ is an order-isomorphism. In other words $\phi$ is an order-embedding of $P$ into $\O(P)$.
\end{lem}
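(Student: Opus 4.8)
The plan is to establish the three defining properties of an order-isomorphism onto $\J(\O(P))$ — that $\phi$ lands in $\J(\O(P))$, that it preserves and reflects the order, and that it is surjective — and then to deduce injectivity for free. The whole argument is a combination of the two immediately preceding lemmas, so there is very little to do beyond assembling them correctly.

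First I would check that $\phi$ is well-defined as a map $P \to \J(\O(P))$: for each $x \in P$ the set $\dol x$ is a principal down-set, hence an element of $\O(P)$, and by Lemma \ref{Join_Irreducible_Elements} it is join-irreducible, so $\dol x \in \J(\O(P))$. Next, recalling that the order on $\O(P)$ (and on its suborder $\J(\O(P))$) is inclusion, the order-embedding property is precisely the equivalence (i) $\iff$ (ii) of Lemma \ref{DownsetLemma1}, which says that $x \leq y$ if and only if $\dol x \subset \dol y$, i.e. $\phi(x) \leq \phi(y)$. Thus $\phi$ is an order-embedding. For surjectivity, Lemma \ref{Join_Irreducible_Elements} gives $\J(\O(P)) = \{\dol x : x \in P\} = \rng \phi$, so $\phi$ maps onto $\J(\O(P))$.

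Injectivity is then automatic: if $\phi(x) = \phi(y)$ then $\dol x \subset \dol y$ and $\dol y \subset \dol x$, so by the embedding property $x \leq y$ and $y \leq x$, whence $x = y$ by antisymmetry of $\leq$. Being a surjective order-embedding onto $\J(\O(P))$, the map $\phi$ is an order-isomorphism. The final clause of the statement follows at once, since $\J(\O(P)) \subset \O(P)$, so $\phi$ regarded as a map into the whole lattice $\O(P)$ is an order-embedding. I do not anticipate any genuine obstacle, as everything is bookkeeping on top of Lemmas \ref{DownsetLemma1} and \ref{Join_Irreducible_Elements}; the only point requiring a moment's care is the observation that an order-embedding which is surjective onto its codomain is necessarily a bijection, and therefore an order-isomorphism.
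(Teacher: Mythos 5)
Your proof is correct. Note that the paper does not actually prove this lemma---it is quoted from Davey and Priestley \cite[p. 116]{priestley} without proof---so there is no internal argument to compare against; your assembly of the statement from Lemma \ref{DownsetLemma1} (the equivalence $x \leq y \iff \dol x \subset \dol y$, giving the embedding property) and Lemma \ref{Join_Irreducible_Elements} (the identification $\J(\O(P)) = \{\dol x : x \in P\}$, giving well-definedness and surjectivity) is exactly the standard derivation, and it is consistent with the paper's definition of order-isomorphism as an onto order-embedding, under which your separate verification of injectivity is a harmless extra.
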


\begin{thm}\cite[p. 118]{priestley}[Birkhoff's Theorem] \label{Birkhoff's_Theorem}
Let $L$ be a finite, distributive lattice. Then the map $\eta: L \to \O(\J(L))$ defined by
$$ \eta(a):= \{x \in \J(L): x \leq a\}$$
is an isomorphism of $L$ onto $\O(L)$.
\end{thm}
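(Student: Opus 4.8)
The strategy is to invoke Lemma~\ref{Lattice_And_Order_Isomorphisms}: it suffices to prove that $\eta$ is an order-isomorphism of $L$ onto $\O(\J(L))$, i.e. a bijection satisfying $a\leq b \iff \eta(a)\subseteq\eta(b)$. First I would check that $\eta$ is well-defined, namely that $\eta(a)$ really is a down-set of $\J(L)$: if $x\in\eta(a)$ and $y\in\J(L)$ with $y\leq x$, then $y\leq x\leq a$, so $y\in\eta(a)$.

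The heart of the argument is the auxiliary claim that every element is the join of the join-irreducibles lying below it, namely $a=\sup\eta(a)=\bigvee\{x\in\J(L):x\leq a\}$ for all $a\in L$. I would prove this by well-founded (downward) induction on $a$, available because $L$ is finite: if $a=0$ or $a$ is join-irreducible the claim is immediate, and otherwise $a=b\vee c$ with $b,c<a$, so by the induction hypothesis each of $b,c$ is a join of join-irreducibles that are $\leq a$, whence $a$ is such a join as well; since each of these irreducibles lies in $\eta(a)$ while every member of $\eta(a)$ is $\leq a$, the supremum of $\eta(a)$ is exactly $a$. I would emphasize that this step uses only finiteness, not distributivity.

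With this in hand the order-embedding property is routine. If $a\leq b$, then any $x\in\J(L)$ with $x\leq a$ satisfies $x\leq b$, so $\eta(a)\subseteq\eta(b)$. Conversely, if $\eta(a)\subseteq\eta(b)$, then monotonicity of suprema together with the auxiliary claim gives $a=\sup\eta(a)\leq\sup\eta(b)=b$. In particular $\eta$ is injective.

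It remains to prove surjectivity, and this is the step where distributivity is essential. Given a down-set $Q\in\O(\J(L))$ I would set $a:=\sup Q=\bigvee Q$ and show $\eta(a)=Q$. The inclusion $Q\subseteq\eta(a)$ is clear, since each $q\in Q$ is a join-irreducible with $q\leq a$. For $\eta(a)\subseteq Q$, take $x\in\J(L)$ with $x\leq a=\bigvee Q$; writing $Q=\{q_1,\dots,q_k\}$, which is finite as $L$ is finite, Lemma~\ref{Lemma_On_Irreducible_Elements}(iii) applied to the distributive lattice $L$ yields $x\leq q_i$ for some $i$, and since $Q$ is a down-set with $q_i\in Q$ we conclude $x\in Q$. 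Thus $\eta(a)=Q$, so $\eta$ is onto, and by Lemma~\ref{Lattice_And_Order_Isomorphisms} the order-isomorphism $\eta$ is a lattice isomorphism. The only genuinely delicate point is the appeal to Lemma~\ref{Lemma_On_Irreducible_Elements}, i.e. to distributivity, in the surjectivity argument; everything else goes through in an arbitrary finite lattice.
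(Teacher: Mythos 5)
Your proof is correct. Note that the paper offers no proof of this statement at all---it is quoted directly from \cite[p. 118]{priestley}---so there is nothing in-paper to compare against; your argument is precisely the standard one from that source, and it fits the paper's setup exactly: reduction to an order-isomorphism via Lemma~\ref{Lattice_And_Order_Isomorphisms}, the finiteness-only induction giving $a=\sup\eta(a)$, and distributivity entering solely through Lemma~\ref{Lemma_On_Irreducible_Elements}(iii) in the surjectivity step (where the empty down-set, handled by $\sup\emptyset=0$ and $\eta(0)=\emptyset$, is the only edge case worth making explicit). Incidentally, your proof shows $\eta$ is onto $\O(\J(L))$, which silently corrects the typo in the statement as printed, where the codomain is written $\O(L)$ instead of $\O(\J(L))$.
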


\subsection{Topological definitions and facts concerning the inverse limits}
In this section we present the definition of the inverse limit and the notion of profinite poset.
Let $X$ be a topological space. We say that $X$ is \emph{$0$-dimensional}, if it has a base of clopens.

Let $P,Q$ be posets with topology $\tau, \sigma$ respectively. We say that $f: P \rightarrow Q$ is a \emph{topological isomorphism} if it is a homeomorphism and an order-isomorphism.

Let $P_n$ be non-empty, finite sets with the discrete topology. Let $E_n$ be relations on $P_n$. We equip $P_1 \times P_2 \times \dots$ with the product topology.
Sets of the form $$\{x_1\} \times \{x_2\} \times \dots \times \{x_n\} \times P_{n+1} \times P_{n+2} \times \dots$$ form a base for topology on $P_1 \times P_2 \times \dots$. Moreover, $P_1 \times P_2 \times \dots$ is $0$-dimensional, compact and metrizable.

Now assume we have a family of onto homomorphisms $\{p_k ^m:m, k \in \N, k<m\}$ where $p_k ^m:P_m \rightarrow P_k, m \in \N, k <m$, such that $ p_\ell ^{k} \circ p_k ^{n}=p_\ell ^{n}$ for $\ell < k < n$. An inverse limit of $P_n$ with $p_k ^m$ is a set $$\varprojlim \left< \{P_n\}_{n \in \N},\{p^m_k \}_{k < m}\right>:=\left\{(x_1, x_2, \dots) \in \prod_{i \in \N} P_i:p_n ^{n+1}(x_{n+1})=x_n \text{ for each } n \in \N \right\}$$

We define relation $E$ on inverse limit as follows:
$$(x_n)E(y_n) \iff x_n E_n y_n \text{ for each } n \in \N $$
If relations $E_n$ are partial orders, then $E$ is also a partial ordering. Then the inverse limit with this ordering (or structure topologically isomorphic to it) is called a \emph{profinite poset}.
\begin{lem}\label{Inverse_Limit_Closed}
$P:=\varprojlim \left< \{P_n\}_{n \in \N},\{p^m_k \}_{k < m}\right>$ is closed.
\end{lem}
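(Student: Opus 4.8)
The plan is to exhibit $P$ as a countable intersection of closed sets, each encoding a single compatibility condition, and then invoke the fact that an arbitrary intersection of closed sets is closed. Concretely, for each $n \in \N$ I would set
$$C_n = \left\{(x_i)_{i \in \N} \in \prod_{i \in \N} P_i : p_n^{n+1}(x_{n+1}) = x_n\right\}.$$
By the very definition of the inverse limit, $P = \bigcap_{n \in \N} C_n$, so it suffices to prove that each $C_n$ is closed in the product topology.

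To see that $C_n$ is closed, I would argue that its complement is open. The key observation is that membership in $C_n$ depends only on the two coordinates $x_n$ and $x_{n+1}$. Since every $P_i$ carries the discrete topology, singletons are open, and hence a set of the form $\{(y_i)_{i} : y_n = a,\ y_{n+1} = b\}$, which fixes only the coordinates $n$ and $n+1$ and leaves the rest free, is a basic open subset of $\prod_{i \in \N} P_i$. Now take any $(x_i) \notin C_n$, so that $p_n^{n+1}(x_{n+1}) \neq x_n$. The basic open set fixing the $n$-th and $(n+1)$-th coordinates to $x_n$ and $x_{n+1}$ is then an open neighbourhood of $(x_i)$ disjoint from $C_n$, because every point in it shares these two coordinates and therefore also fails the equation $p_n^{n+1}(y_{n+1}) = y_n$. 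Thus the complement of $C_n$ is open and $C_n$ is closed.

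Equivalently, one could let $\pi_{n,n+1} : \prod_{i \in \N} P_i \to P_n \times P_{n+1}$ be the continuous coordinate projection and note that $C_n = \pi_{n,n+1}^{-1}(G)$, where $G = \{(a,b) \in P_n \times P_{n+1} : p_n^{n+1}(b) = a\}$ is the graph of $p_n^{n+1}$; since $P_n \times P_{n+1}$ is finite and discrete, $G$ is closed, and the preimage of a closed set under a continuous map is closed. Finally, since $P = \bigcap_{n \in \N} C_n$ is an intersection of closed sets, it is closed.

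I do not expect any genuine obstacle here, as the statement is a standard topological fact. The only point requiring a little care is the description of the product topology: because the factors are discrete, one may fix any finite set of coordinates, not merely an initial segment as in the base displayed above, and still obtain a basic open set — this is exactly what makes the two-coordinate argument for the openness of the complement of $C_n$ go through.
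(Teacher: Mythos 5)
Your proof is correct, but it is organized differently from the paper's. You decompose $P$ as the countable intersection $\bigcap_{n\in\N} C_n$ of ``single-condition'' sets and prove each $C_n$ closed — either via a neighbourhood fixing only the two relevant coordinates, or, more structurally, as the preimage of the graph of $p_n^{n+1}$ (automatically closed in a finite discrete space) under the continuous projection onto coordinates $n$ and $n+1$. The paper instead argues on the complement of $P$ all at once: given $(x_n)\notin P$ it picks the smallest index $n$ at which compatibility fails and observes that the initial-segment cylinder $\{x_1\}\times\dots\times\{x_{n+1}\}\times P_{n+2}\times\dots$ lies entirely outside $P$. Both arguments hinge on the same observation, namely that the failure of one compatibility condition is detected by finitely many coordinates; the paper's version is shorter and stays within its explicitly displayed base of initial-segment cylinders, while yours buys two things: working one condition at a time removes any need to choose a distinguished failing index, and the graph-preimage formulation generalizes verbatim to inverse limits over non-discrete Hausdorff factors, where the graph of a continuous map into a Hausdorff space is still closed. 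Your closing remark is also the right point of care: the two-coordinate cylinders are not members of the paper's stated base, but they are open in the product topology, so the argument is sound.
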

\begin{proof}
We will show that $(\prod_{i \in \N} P_i) \setminus P$ is open. Let $(x_n) \in (\prod_{i \in \N} P_i) \setminus P$ and $n$ be the smallest index such that $p_n ^{n+1}(x_{n+1}) \neq x_n$. Then see that $(x_n) \in \{x_1\} \times \{x_2\} \times \dots \times \{x_{n+1}\} \times P_{n+2} \times \dots \subset (\prod_{i \in \N} P_i) \setminus P$, so $(\prod_{i \in \N} P_i) \setminus P$ is open, thus $P$ is closed.
\end{proof}

\begin{lem}\label{RelationOnInverseLimitIsClosed}
    Relation $E$ is a closed set in $P \times P$.
\end{lem}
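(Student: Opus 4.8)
The plan is to exhibit $E$ as an intersection of closed (in fact clopen) subsets of $P\times P$ and conclude that it is closed. First I would rewrite the defining condition as an intersection over coordinates: $E=\bigcap_{n\in\N}C_n$, where $C_n:=\{((x_k),(y_k))\in P\times P:\ x_n E_n y_n\}$. This is immediate from the definition, since $(x_n)E(y_n)$ holds precisely when $x_nE_ny_n$ for every $n\in\N$.

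The key step is then to show that each $C_n$ is closed. For a fixed $n$, the coordinate projection $\pi_n\colon P\to P_n$ is continuous, because $P$ carries the subspace topology inherited from the product $\prod_i P_i$ and $P_n$ is a factor. Consequently $\pi_n\times\pi_n\colon P\times P\to P_n\times P_n$ is continuous, and one has $C_n=(\pi_n\times\pi_n)^{-1}(E_n)$, where the relation $E_n$ is regarded as a subset of $P_n\times P_n$. Since $P_n$ is finite with the discrete topology, $P_n\times P_n$ is finite and discrete, so \emph{every} subset of it—in particular $E_n$—is clopen. Hence $C_n$ is clopen, and in particular closed. Because an arbitrary intersection of closed sets is closed, $E=\bigcap_{n}C_n$ is closed in $P\times P$.

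Alternatively, one can argue that the complement is open directly, mirroring the proof of Lemma \ref{Inverse_Limit_Closed}: if $((x_k),(y_k))\notin E$, choose $n$ with $\neg(x_nE_ny_n)$; then fixing the first $n$ coordinates of each factor produces a basic open neighborhood of $((x_k),(y_k))$ entirely contained in $(P\times P)\setminus E$, since any point of it still fails the relation $E_n$ in coordinate $n$.

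I do not expect any genuine obstacle here; the only point requiring care is conceptual rather than technical, namely recognizing that $E$ is defined by a \emph{universal} condition over all $n$, so its natural description is as a countable intersection of the per-coordinate conditions $C_n$, each of which is closed thanks to the discreteness of the finite sets $P_n$. The finiteness (hence discreteness) of the $P_n$ is exactly what makes the relations $E_n$ clopen and thereby forces the intersection to be closed.
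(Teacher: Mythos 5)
Your proposal is correct. Your primary argument differs in packaging from the paper's: the paper proves directly that $P^2\setminus E$ is open by taking a point $((x_n),(y_n))\notin E$, picking a coordinate $n$ where $x_nE_ny_n$ fails, and exhibiting a basic open box (first $n$ coordinates fixed in each factor) around the point that misses $E$ --- which is exactly your ``alternative'' paragraph. Your main route instead writes $E=\bigcap_n(\pi_n\times\pi_n)^{-1}(E_n)$ and invokes continuity of the coordinate projections, discreteness of the finite factors $P_n\times P_n$, and stability of closedness under arbitrary intersections. The two arguments are logically the same observation (the relation is determined coordinatewise, and each coordinate condition is clopen), but yours is more structural and generalizes immediately to relations of any arity and to arbitrary index sets, while the paper's is self-contained at the level of basic open sets and requires no appeal to general facts about continuity or intersections. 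Both are complete; nothing is missing from either.
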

\begin{proof}
We will show that $P^2 \setminus E$ is an open set. Take $((x_n), (y_n)) \in P^2 \setminus E$.  Let $n$ be the smallest index such that $ \neg (x_n E_n y_n$). 

Then $((x_n), (y_n)) \in (P \cap \{x_1\} \times \{x_2\} \times \dots \times \{x_n\} \times P_{n+1} \times \dots) \times (P \cap \{y_1\} \times \{y_2\} \times \dots \times \{y_n\} \times P_{n+1} \times \dots) \subset P^2 \setminus E$ and $(P \cap \{x_1\} \times \{x_2\} \times \dots \times \{x_n\} \times P_{n+1} \times \dots) \times (P \cap \{y_1\} \times \{y_2\} \times \dots \times \{y_n\} \times P_{n+1} \times \dots)$ is open in $P^2$, thus $P^2 \setminus E$ is open, which makes $E$ closed.
\end{proof}
Lemma \ref{Inverse_Limit_Closed} proves that our inverse limit is closed and it is a sub-space of compact space, thus it is compact. See that sets  $P \cap (\{x_1\} \times \dots \times \{x_n\} \times P_{n+1} \times \dots)$ form a base for the inverse limit, thus it is $0$-dimensional and it is also metrizable.
\subsection{Definitions and facts from the category theory}
In this section we present definitions and facts concerning category theory and $\text{\Fraisse}$ sequences. They come from \cite{awodey}, \cite{kubis} and \cite{kubis2014}.

A \emph{category} is a structure of the form $\K =\langle V,A,\dom,\cod,\circ \rangle$ where $V$ is a class of objects, $A$ is a class of arrows, $\dom:A \rightarrow V$ and $\cod:A \rightarrow V$ are the domain and codomain functions, and $\circ$ is a partial binary operation on arrows, such that the following conditions are satisfied:
\begin{enumerate}
\item $f \circ g$ is defined whenever $\cod(g)=\dom(f)$ and $\circ$ is associative, that is, $(f \circ g) \circ h=f \circ (g \circ h)$ whenever $\cod(h)=\dom(g)$ and $\cod(g)=\dom(f)$
\item For each object $a \in V$ there is an arrow $\id_a$ with $\dom(\id_a)=\cod(\id_a)=a$ and such that $\id_a \circ g=g,f \circ \id_a=f$ holds for every $f,g \in A$ with $\cod(g)=\dom(f)=a$.
\item For every objects $a,b \in V$ the class \[ \K(a,b):=\{f \in A: \dom(f)=a \wedge \cod(f)=b \} \] is a set.
\end{enumerate}

Let $\K$ be a category. We say that $a$ is an \emph{initial object}, if for every object $b$ in $\K$ there is a unique arrow $f:a \rightarrow b$. $a$ is a \emph{terminal object} if for every object $b$ in $\K$ there is a unique arrow $f:b \rightarrow a$.

Let $\K$ be a category. We say that $\K$ is \emph{directed} if for every $a,b \in \K$ there exists $c \in \K$ such that both sets $\K(a,c),\K(b,c)$ are not empty.

Let $\K$ be a category. We say that $\K$ has the \emph{amalgamation property} if for every $a,b,c \in \K$ and for every morphisms $f \in \K(a,b),g \in \K(a,c)$ there exists $d \in \K$ and morphisms $f^{'} \in \K(b,d),g^{'} \in \K(c,d)$ such that $f^{'} \circ f=g^{'} \circ g$. In other words the diagram 
\begin{equation}
\begin{tikzcd}
									& b \arrow[rd, "f^{'}"]&		\\							
a \arrow[ru, "f"] \arrow[rd, "g"'] &					 		    &d \\
									& c \arrow[ru, "g^{'}"'] &		\\					
\end{tikzcd}
\end{equation}
commutes.

Let $\K$ be a category. A pair of the form $\langle \lbrace u_n \rbrace,\lbrace u_k ^{n} \rbrace_{k<n} \rangle$ such that $\lbrace u_n :n\in\N\rbrace \subset \K$, $u_k ^{n} \in \K(u_k, u_n)$ for every $n, k \in \N$, where $k<n$, and $ u_k ^{n} \circ u_\ell ^{k}=u_\ell ^{n}$ is called an \emph{inductive sequence} in $\K$ and we denote it by $\overrightarrow{u}$.
\footnote{An inductive sequence can be enumerated by any ordinal. The general definition sounds as follows.
Let $\K$ be a category and $\delta >0$ an ordinal. A pair of the form $\langle \lbrace a_\xi \rbrace_{\xi<\delta},\lbrace a_\xi ^{\eta} \rbrace_{\xi<\eta<\delta} \rangle$ such that $\lbrace a_{\xi} :\xi<\delta \rbrace \subset \K$ and $ a_\eta ^{\varrho} \circ a_\xi ^{\eta}=a_\xi ^{\varrho}$ is called an \emph{inductive sequence} in $\K$ and we denote it by $\overrightarrow{a}$. The ordinal $\delta$ is the \emph{length} of $\overrightarrow{a}$.}

\hypertarget{ExtensionProperty}{}

Let $\overrightarrow{u}$ be an inductive sequence in category $\K$.
We say that it has the \emph{extension property} (E) if for every arrows $f:a \to u_n, g:a \to b$ where $n \in \N$, there exists $m > n$ and $h \in \K(b, u_m)$ such that the diagram 
\begin{equation*}
    \begin{tikzcd}
    u_n \arrow[r, "u_n ^m"] & u_m \\
    a \arrow[u, "f"] \arrow[r, "g"'] & b \arrow[u, "h"'] \\
    \end{tikzcd}
\end{equation*}
commutes.

Let $\K$ be a fixed category and let $\overrightarrow{u}$ be an inductive sequence in that category. We say that $\overrightarrow{u}$ is a \emph{\text{\Fraisse} sequence} if the following conditions are met
\begin{enumerate}
\item[(U)] \hypertarget{UProperty}{} for every $x \in \K$, there exists $n \in \N$ such that $\K(x,u_n) \neq \emptyset$ ;
\item[(A)] \hypertarget{AProperty}{} for every $k \in \N$ and for every arrow $f \in \K(u_k,y)$, where $y \in \K$, there exist $\ell>k$ and $g \in \K(y,u_\ell)$ such that $u^{\ell}_{k} = g \circ f$. 
\end{enumerate}

Let $\K$ be a fixed category and $\mathcal{F}$ a family of arrows. By $\Dom(\mathcal{F}) $ we denote the set $\{\dom(f) \in \K:f \in \mathcal{F}\}$. We say that $\mathcal{F}$ is \emph{dominating} in $\K$ (or that $\K$ is \emph{dominated} by $\mathcal{F}$) if the following conditions are met
\begin{enumerate}
\item[(D1)] \hypertarget{D1Property}{} for every $x \in \K$ there is $a \in \Dom(\mathcal{F})$ such that $\K(x,a) \neq \emptyset$ ;
\item[(D2)] \hypertarget{D2Property}{} given $a \in \Dom(\mathcal{F})$ and $f\in\K(a,y)$, there exist $g\in\K(y,b)$ such that $g \circ f \in \mathcal{F}$.
\end{enumerate}

\begin{lem}\label{InitialLemma}
Let $\K$ be a category with an initial object $a$. If $\K$ has the amalgamation property, then $\K$ is directed.
\end{lem}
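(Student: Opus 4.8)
The plan is to apply the amalgamation property directly, using the initial object to supply the common source of a span. First I would fix two arbitrary objects $b, c \in \K$; the goal is to exhibit some $d \in \K$ with $\K(b,d) \neq \emptyset$ and $\K(c,d) \neq \emptyset$. Since $a$ is initial, there exist arrows $f \colon a \to b$ and $g \colon a \to c$ (indeed these are unique, but uniqueness plays no role here). Thus $f \in \K(a,b)$ and $g \in \K(a,c)$ form a span with common domain $a$, which is exactly the configuration on which the amalgamation property operates. Conveniently, the initial object $a$ already plays the role of the apex named $a$ in the statement of the amalgamation property, so no renaming is needed.

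Next I would invoke the amalgamation property on this span: it produces an object $d \in \K$ together with morphisms $f' \in \K(b,d)$ and $g' \in \K(c,d)$ satisfying $f' \circ f = g' \circ g$. The commuting condition is immaterial for directedness; all that is used is the mere existence of the arrows $f'$ and $g'$. Since $f' \in \K(b,d)$ and $g' \in \K(c,d)$, both hom-sets $\K(b,d)$ and $\K(c,d)$ are nonempty, so $d$ witnesses directedness for the pair $b,c$. As $b$ and $c$ were arbitrary, $\K$ is directed.

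There is no genuine obstacle: the lemma is an immediate consequence of combining the two hypotheses, the only subtlety being notational. One must keep the letter $a$ reserved for the initial object and remember that the two objects to be amalgamated are the given $b$ and $c$, whose arrows from $a$ are guaranteed precisely by initiality. I would also remark that initiality is used only to guarantee \emph{some} arrow out of $a$ into each object, so the hypothesis could be weakened to ``$a$ is a weakly initial object'' without affecting the argument.
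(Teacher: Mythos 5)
Your proof is correct: initiality supplies the span $f\colon a\to b$, $g\colon a\to c$, amalgamation then yields the common target $d$ with arrows from $b$ and $c$, and you rightly note that the commutativity condition $f'\circ f=g'\circ g$ plays no role for directedness. The paper itself states Lemma \ref{InitialLemma} without proof (quoting it from the categorical \Fraisse{} theory literature), and your argument --- including the observation that weak initiality would suffice --- is precisely the standard one intended there.
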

\begin{lem}\label{APropertyImpliesUProperty}
Let $\overrightarrow{u}$ be an inductive sequence in directed category $\K$. If $\overrightarrow{u}$ has \hyperlink{AProperty}{\emph{(A)}} property, then it has \hyperlink{UProperty}{\emph{(U)}} property.
\end{lem}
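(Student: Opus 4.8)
The plan is to take an arbitrary object $x \in \K$ and manufacture an arrow from $x$ into some term $u_n$ of the sequence. The idea is to first use directedness to link $x$ to the sequence through a common target, and then use the \hyperlink{AProperty}{(A)} property to pull that common target back into the sequence.

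First I would apply directedness of $\K$ to the pair $x$ and $u_1$. By definition this produces an object $c \in \K$ together with arrows $f \in \K(x,c)$ and $g \in \K(u_1,c)$; in particular $\K(u_1,c) \neq \emptyset$, which is exactly the hypothesis shape needed to feed into \hyperlink{AProperty}{(A)}.

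Next I would invoke the \hyperlink{AProperty}{(A)} property with $k = 1$, the target object $y = c$, and the arrow $g \in \K(u_1,c)$. This yields an index $\ell > 1$ and an arrow $h \in \K(c,u_\ell)$ with $u^\ell_1 = h \circ g$. For the purpose of (U) the precise factorization $u^\ell_1 = h \circ g$ is irrelevant; what I extract is only the existence of the arrow $h : c \to u_\ell$ landing back in the sequence.

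Finally, since $\cod(f) = c = \dom(h)$, the composite $h \circ f$ is defined and belongs to $\K(x,u_\ell)$, so $\K(x,u_\ell) \neq \emptyset$ and taking $n = \ell$ gives \hyperlink{UProperty}{(U)}. I do not anticipate a genuine obstacle: the only real step is recognizing that directedness routes $x$ into the sequence via $c$, and (A) then reels the target $c$ back to some $u_\ell$; the remaining verification is a single composition check. The one point to keep in mind is that directedness must be applied to $x$ together with a term of the sequence (here $u_1$) rather than with an arbitrary object, so that (A) becomes applicable.
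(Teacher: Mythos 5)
Your proof is correct: directedness applied to $x$ and $u_1$ produces $c$ with $\K(u_1,c)\neq\emptyset$, property (A) sends $c$ back into the sequence via some $h\in\K(c,u_\ell)$, and the composite $h\circ f\in\K(x,u_\ell)$ witnesses (U). The paper itself states this lemma without proof (it is quoted from Kubi\'s's \Fraisse{} theory), and your argument is exactly the standard one, so there is nothing to add.
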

\begin{lem}\label{FraisseSequenceHasEProperty}
Let $\overrightarrow{u}$ be a \text{\Fraisse} sequence in category $\K$ which has the amalgamation property. Then $\overrightarrow{u}$ has \hyperlink{ExtensionProperty}{\emph{(E)}} property.
\end{lem}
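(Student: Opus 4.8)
The plan is to obtain the extension arrow by combining the amalgamation property with property (A) of the \Fraisse{} sequence in a single two-step diagram chase. Suppose we are given arrows $f \colon a \to u_n$ and $g \colon a \to b$ as in the statement of (E). These two arrows form a span $u_n \xleftarrow{f} a \xrightarrow{g} b$ with common domain $a$, which is exactly the configuration to which the amalgamation property applies.

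First I would apply the amalgamation property to this span. This yields an object $d \in \K$ together with arrows $f' \in \K(u_n, d)$ and $g' \in \K(b, d)$ satisfying $f' \circ f = g' \circ g$. At this point the two composites out of $a$ have been merged inside $d$, but $d$ need not belong to the sequence $\overrightarrow{u}$; this is precisely the gap that property (A) will close.

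Next I would feed the arrow $f' \in \K(u_n, d)$ into property (A), taking $k = n$ and $y = d$. Property (A) then produces an index $\ell > n$ and an arrow $q \in \K(d, u_\ell)$ with $u_n^\ell = q \circ f'$. Setting $m := \ell$ and $h := q \circ g' \in \K(b, u_m)$, the required commutativity follows by a direct computation:
\[
h \circ g = q \circ g' \circ g = q \circ (f' \circ f) = (q \circ f') \circ f = u_n^\ell \circ f = u_n^m \circ f,
\]
so the diagram in (E) commutes.

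No single step is genuinely hard here; the whole argument is a routine diagram chase once the pieces are assembled in the right order. The only point requiring care is bookkeeping: one must apply amalgamation to the correct span (with $a$ as the apex) and then apply (A) to the leg $f' \colon u_n \to d$ landing in the sequence at level $n$, so that the transition map $u_n^\ell$ is reproduced on the nose. I would double-check that the indices match, that is, that property (A) really returns $\ell > n = k$ so that $u_n^m$ is a legitimate transition arrow of $\overrightarrow{u}$, and that all composites are defined, i.e. that domains and codomains align along the chain $a \xrightarrow{g} b \xrightarrow{g'} d \xrightarrow{q} u_\ell$.
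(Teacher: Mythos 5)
Your proof is correct: amalgamating the span $u_n \xleftarrow{f} a \xrightarrow{g} b$ to get $f'\circ f = g'\circ g$ in some $d$, then using property (A) on $f'\in\K(u_n,d)$ to obtain $\ell>n$ and $q$ with $u_n^\ell = q\circ f'$, and finally taking $h:=q\circ g'$, is exactly the standard argument, and your diagram chase $h\circ g = q\circ f'\circ f = u_n^\ell\circ f$ verifies (E) on the nose. The paper itself states this lemma without proof (it is imported from Kubi\'s's notes), and your argument is precisely the one that fills that gap, so there is nothing to correct.
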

\begin{thm}[Existence of Fra\"\i ss\'e sequence]\label{FraisseExistence}
Let $\K$ be a directed category with amalgamation property. Assume that $\K$ is dominated by a countable family of arrows $\mathcal{F}$. Then $\K$ has a Fra\"\i ss\'e sequence.
\end{thm}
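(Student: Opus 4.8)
The plan is to build the sequence by a bookkeeping recursion, maintaining two invariants: that every object $u_n$ lies in $\Dom(\mathcal{F})$, and that a restricted absorption property for arrows of $\mathcal{F}$ holds. Property (U) will then come for free from Lemma \ref{APropertyImpliesUProperty}, since $\K$ is directed and the sequence will have (A); so the whole task reduces to securing (A). The key observation is that full (A) need not be built by hand — it suffices to guarantee the weaker statement that for every $n\in\N$ and every $f\in\mathcal{F}$ with $\dom(f)=u_n$ there are $m>n$ and $g\in\K(\cod(f),u_m)$ with $g\circ f=u_n^m$ (call this (A0)), together with the invariant $u_n\in\Dom(\mathcal{F})$. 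Indeed, given an arbitrary arrow $f\colon u_k\to y$, the fact that $u_k\in\Dom(\mathcal{F})$ lets (D2) produce $g_1\colon y\to b$ with $\hat f:=g_1\circ f\in\mathcal{F}$ and $\dom(\hat f)=u_k$; applying (A0) to $\hat f$ gives $m>k$ and $g_2\colon b\to u_m$ with $g_2\circ\hat f=u_k^m$, so $g:=g_2\circ g_1$ witnesses (A). This reduction is the heart of the argument and the place where domination is used essentially.

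For the recursion I would first fix an enumeration $\mathcal{F}=\{f_i:i\in\w\}$ (possible since $\mathcal{F}$ is countable) and a bijection $s\mapsto(n_s,i_s)$ of $\N$ onto $\N\times\w$ with $n_s\le s$ for all $s$, so that the task of absorbing $f_{i_s}$ from stage $u_{n_s}$ is scheduled only after $u_{n_s}$ has been defined. Choose $u_1\in\Dom(\mathcal{F})$, which is nonempty by (D1). At stage $s$, having built $u_1,\dots,u_s$ and the bonding maps, inspect the pair $(n_s,i_s)$. If $\dom(f_{i_s})=u_{n_s}$, amalgamate the span $u_s\xleftarrow{u_{n_s}^s}u_{n_s}\xrightarrow{f_{i_s}}\cod(f_{i_s})$ to obtain $d$ with $p\colon u_s\to d$ and $q\colon\cod(f_{i_s})\to d$ satisfying $p\circ u_{n_s}^s=q\circ f_{i_s}$; then apply (D1) to $d$ to get $a'\in\Dom(\mathcal{F})$ and $r\colon d\to a'$, and set $u_{s+1}:=a'$ with $u_s^{s+1}:=r\circ p$. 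Otherwise set $u_{s+1}:=u_s$ and $u_s^{s+1}:=\id_{u_s}$. In both cases $u_{s+1}\in\Dom(\mathcal{F})$, so the invariant is preserved, and the composites $u_k^n$ form a genuine inductive sequence.

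It then remains to verify (A0). Given $n$ and $f\in\mathcal{F}$ with $\dom(f)=u_n$, write $f=f_i$; the pair $(n,i)$ is served at the unique stage $s$ with $(n_s,i_s)=(n,i)$, where the active branch fires and yields $g:=r\circ q\colon\cod(f)\to u_{s+1}$ with $g\circ f=r\circ q\circ f=r\circ p\circ u_n^s=u_s^{s+1}\circ u_n^s=u_n^{s+1}$, so $m=s+1>n$ works. With (A0) and the invariant established, the reduction of the first paragraph delivers (A), and Lemma \ref{APropertyImpliesUProperty} upgrades (A) to (U), whence $\overrightarrow{u}$ is a \Fraisse{} sequence.

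The hard part is precisely that (A) quantifies over all arrows out of the $u_k$, whereas domination only controls arrows emanating from objects of $\Dom(\mathcal{F})$; my fix is the invariant $u_n\in\Dom(\mathcal{F})$, kept alive by chasing every amalgamation with a (D1)-step back into $\Dom(\mathcal{F})$, which lets (D2) convert an arbitrary arrow into one of $\mathcal{F}$ before (A0) is applied. A secondary point to get right is the bookkeeping: the enumerating bijection must satisfy $n_s\le s$ so that no scheduled task references an undefined object, while still listing every pair $(n,i)$ exactly once so that no required instance of (A0) is missed.
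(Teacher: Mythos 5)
The paper never proves this theorem: it is quoted in the preliminaries as a known fact from Kubi\'s's work (\cite{kubis}, \cite{kubis2014}), so there is no in-paper argument to compare yours against. Your proof is correct, and it is essentially the standard construction from that literature. The heart of it — reducing (A) to the restricted absorption property (A0) for arrows of $\mathcal{F}$, kept usable by the invariant $u_n\in\Dom(\mathcal{F})$ — is exactly the role domination plays: \hyperlink{D2Property}{(D2)} turns an arbitrary $f\colon u_k\to y$ into an $\mathcal{F}$-arrow $\hat f=g_1\circ f$ with the same domain, (A0) absorbs $\hat f$ into the sequence, and the composite $g_2\circ g_1$ witnesses (A); your bookkeeping (a bijection $s\mapsto(n_s,i_s)$ with $n_s\le s$, amalgamation of the span $u_s\leftarrow u_{n_s}\to\cod(f_{i_s})$, then a \hyperlink{D1Property}{(D1)}-step back into $\Dom(\mathcal{F})$) makes the recursion well defined, serves every pair exactly once, and the coherence $u_s^{s+1}\circ u_{n_s}^{s}=u_{n_s}^{s+1}$ gives (A0) at stage $s$; finally Lemma \ref{APropertyImpliesUProperty} legitimately upgrades (A) to (U) because $\K$ is directed. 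Two minor points you could make explicit: when $n_s=s$ the bonding map $u_{n_s}^{s}$ must be read as $\id_{u_s}$ (the paper only defines $u_k^n$ for $k<n$), and the opening step of picking $u_1\in\Dom(\mathcal{F})$ silently assumes $\K$ is nonempty — an assumption the theorem itself needs anyway, since the empty category vacuously satisfies all hypotheses yet admits no inductive sequence.
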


\section{Category of finite posets}\label{Section_Category_of_finite_posets}

Let $V$ be a class of all finite posets (we identify all isomorphic posets with each other). Then $V$ is a countable set. Formally, one can define $V$ as a set of all finite posets on $\N$. 
Now let $A$ be the class of all quotient mappings between finite posets; we say that $f$ is an arrow from $a$ to $b$, provided $f$ is quotient mapping from $b$ onto $a$. Then $\dom f$ is the range of $f$ and $\cod f$ is the domain of $f$.   
The composition $g\bullet f$ of two arrows $f$ from $a$ to $b$ and  $g$ from $b$ to $c$ is $f \circ g:c\to a$, where $\circ$ is the usual composition for mappings. 
The composition of two quotient mapping is a quotient mapping, identity is a quotient mapping and class of arrows from $a$ to $b$ is a subset of $a^b$. Therefore $\K:=\langle V,A,\dom,\cod,\bullet \rangle$ is a category.

In the sequel we will use $\leq$ for poset relation if it is clear what is its domain. We will use subscripts, say $\leq_A$, $\leq_B$, to indicate relations domains if more than one poset is currently considered.

Let $(A, \leq)$ be a poset and let $x,y \in A$. If there is no $z$ less than $x$, no $z$ greater than $y$ and $x<y$, then we say pair $(x,y)$ is a \emph{$\mathbf{2}$-component} in $A$.

As $\K$ is a countable category, if we prove that it it directed and has the amalgamation property, then from Theorem \ref{FraisseExistence} we get, that there is a \text{\Fraisse} sequence in that category. In the next section we are going to prove that our category indeed has these two properties.

\subsection{Directedness and the amalgamation property}
Let $E$ be a relation on $X$. We say that $x$ is $E$-isolated, if for every $y \neq x$ neither $xEy$ nor $yEx$.

\begin{lem}
Let $(A,\leq_A)$ be a finite poset without $\leq_A$-isolated points. There is a quotient map $\varphi:\dot\bigcup_{i < n} \mathbf{2} \rightarrow A$ for some $n \in \mathbb{N}$.
\end{lem}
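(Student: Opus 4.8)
The plan is to construct the quotient map $\varphi$ explicitly, using a single copy of $\mathbf{2}$ to witness each strictly comparable pair of $A$. First I would enumerate the set of all strictly comparable pairs
$$R=\{(p,r)\in A\times A : p<_A r\},$$
say $R=\{(p_j,r_j):j<n\}$ with $p_j<_A r_j$; note $R$ is finite, and $R\neq\emptyset$ whenever $A$ is non-empty, since by the absence of $\leq_A$-isolated points every element is comparable to some other element. I then set $B=\dot\bigcup_{j<n}\mathbf{2}$ and denote the bottom and top of the $j$-th copy of $\mathbf{2}$ by $b_j<t_j$. The map is defined on generators by $\varphi(b_j)=p_j$ and $\varphi(t_j)=r_j$.

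The bulk of the work is then the verification of the three defining conditions of a quotient map. For the \emph{homomorphism} property, I would observe that the only comparable pairs in the disjoint union $B$ are the reflexive ones together with the within-copy pairs $b_j\leq t_j$; the reflexive pairs are sent to $\varphi(x)\leq_A\varphi(x)$, and each $b_j\leq t_j$ is sent to $p_j\leq_A r_j$, so order is preserved. For \emph{surjectivity}, this is precisely where the hypothesis enters: given $a\in A$, the absence of $\leq_A$-isolated points yields some $a'\neq a$ comparable to $a$, so $a$ occurs as a coordinate of some pair in $R$ and hence lies in $\rng(\varphi)$. For the \emph{quotient} condition, take any $p\leq_A r$ in $A$; if $p=r$, choose (by surjectivity) a preimage $x$ of $p$ and use $x\leq x$, while if $p<_A r$ then $(p,r)=(p_j,r_j)$ for some $j$ and the pair $b_j\leq t_j$ is the required witness.

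The point requiring the most care is that the quotient condition must be checked for \emph{all} comparable pairs $p\leq_A r$, not merely for the covering relations of the Hasse diagram: because a single copy of $\mathbf{2}$ contributes, under $\varphi$, exactly one strict comparable pair, a separate copy must be allocated to every strict pair, including the non-covering (transitive) ones. This is exactly the reason for indexing $B$ by the full relation $R$ rather than by the covering relation, and it is what fixes the value of $n$. Everything else is a routine case check, and the no-isolated-points hypothesis is used only to secure surjectivity; degenerate cases (e.g. $A$ empty, giving $n=0$ and the empty map) fall under the same construction.
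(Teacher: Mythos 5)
Your proof is correct and follows essentially the same route as the paper: both enumerate \emph{all} strictly comparable pairs of $A$, allocate one copy of $\mathbf{2}$ per pair, and send the bottom and top of each copy to the lower and upper element of the corresponding pair. The only difference is that the paper compresses the verification into ``Clearly $f$ is a quotient map,'' whereas you spell out the homomorphism, surjectivity (where the no-isolated-points hypothesis is used) and quotient conditions explicitly.
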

\begin{proof}
Let $(x_0,y_0), \ldots, (x_{m-1},y_{m-1})$ be an enumeration of all pairs $(a,b) \in A \times A$ with $a <_{A} b$. By $(B,\leq_B)$ we denote the set $\lbrace 0, 1, \ldots, 2m-1\rbrace$ with a partial order given by $x \leq_B y \Leftrightarrow (x=y \vee \exists_{i<m} (x=2i \wedge y=2i +1))$. We define $f:B \rightarrow A$ as follows 
\begin{equation}
f(2i)=x_i,f(2i+1)=y_i \text{ for } i<m
\end{equation}
Clearly $f$ is a quotient map. Since $B$ and $\dot\bigcup_{i<m} \mathbf{2}$ are isomorphic, the desired mapping $\varphi$ is a composition of $f$ with the isomorphism. 
\end{proof}
\begin{lem}\label{sticks}
Let $(C,\leq_C)$ be a poset. There is a quotient map $\chi:\dot\bigcup_{i< p} \mathbf{2} \rightarrow C$ for some $p \in \mathbb{N}$.
\end{lem}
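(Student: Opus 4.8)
The plan is to reduce to the preceding lemma by peeling off the isolated points. Let $I=\{c_1,\dots,c_s\}$ be the set of $\leq_C$-isolated points of $C$ and put $C'=C\setminus I$. Since each $c_j$ is incomparable to every element of $C$ other than itself, $C$ decomposes as a disjoint union of posets $C=C'\,\dot\cup\,\{c_1\}\,\dot\cup\,\cdots\,\dot\cup\,\{c_s\}$. First I would verify that $C'$, with the induced order, has no $\leq_{C'}$-isolated points: if $x\in C'$ then, by definition of $I$, there is some $y\neq x$ comparable to $x$ in $C$; such a $y$ is itself non-isolated, hence $y\in C'$, so $x$ is comparable in $C'$ to an element other than itself.

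Next I would apply the preceding lemma to the finite poset $C'$ to obtain a quotient map $\varphi:\dot\bigcup_{i<n}\mathbf{2}\to C'$ for some $n$. For each isolated point $c_j$ I would take $\psi_j:\mathbf{2}\to\{c_j\}$ to be the map collapsing both elements of the stick onto $c_j$. This $\psi_j$ is onto, it is an order homomorphism (the only nontrivial comparability $1\leq 2$ is sent to $c_j\leq c_j$), and it is a quotient map because the sole comparability $c_j\leq c_j$ in the codomain is witnessed by $1\leq 2$ in the domain.

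Finally I would assemble the required map as the piecewise map $\chi=\varphi\,\dot\cup\,\psi_1\,\dot\cup\,\cdots\,\dot\cup\,\psi_s$, whose domain is the disjoint union of all the sticks, namely $\dot\bigcup_{i<p}\mathbf{2}$ with $p=n+s$. Here I would use the routine observation that a disjoint union of quotient maps is again a quotient map: surjectivity is immediate from surjectivity of the summands; the homomorphism property holds because in a disjoint union no comparability crosses components, so every relation $x\leq y$ in the domain lies inside a single stick and is handled by the relevant summand; and the lifting condition for a pair $q\leq_C r$ is supplied by whichever summand covers the component containing $q$ and $r$. Crucially, if $q$ or $r$ is an isolated $c_j$ then $q=r=c_j$, so the pair lives entirely in the summand $\psi_j$, while if both lie in $C'$ the pair is lifted by $\varphi$. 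This produces the desired $\chi:\dot\bigcup_{i<p}\mathbf{2}\to C$.

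The only content beyond the preceding lemma is the handling of isolated points, and the step I expect to require the most care is checking that collapsing a whole $\mathbf{2}$ onto a single isolated point is a legitimate quotient map: the reflexive relation $c_j\leq c_j$ still has to be lifted, which is precisely why a two-element stick rather than a single point is used. The remaining bookkeeping is straightforward. (As the domain is a finite disjoint union of sticks, the codomain $C$ is necessarily finite, which is also what allows the preceding lemma to be applied to $C'$.)
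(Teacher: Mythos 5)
Your proposal is correct and follows essentially the same route as the paper: strip off the set $I$ of isolated points, apply the preceding lemma to $C\setminus I$, collapse one copy of $\mathbf{2}$ onto each isolated point, and take $\chi$ to be the disjoint union of these maps. You are merely more explicit than the paper in checking that $C\setminus I$ has no isolated points and that the assembled map is indeed a quotient, which are worthwhile but routine verifications.
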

\begin{proof}
Let $I$ be a set of all isolated points in $C$, and let $A=C \setminus I$. Then there is a quotient map $\phi :\dot\bigcup_{j<n} \mathbf{2} \rightarrow A$ for some $n \in \mathbb{N}$. Let $i$ be the cardinality of $I$. We define $\psi: \dot\bigcup_{j<i} \mathbf{2} \rightarrow I$ so that we map $k$-th $\mathbf{2}$-component of $\dot\bigcup_{j<i} \mathbf{2}$ onto $k$-th point of $I$. Finally we define $\chi: \dot\bigcup_{j<i} \mathbf{2} \text{  }\dot\cup \text{  } \dot\bigcup_{j<n} \mathbf{2} \rightarrow C$ as $\chi := \phi \cup \psi$.
\end{proof}
\begin{thm}\label{DirectednessANDAmProp}
Category of finite posets is directed and has the amalgamation property.
\end{thm}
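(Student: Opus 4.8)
The plan is to prove the amalgamation property first, as it is the substantive part, and then obtain directedness almost for free. For the latter I would check that the one-point poset $\mathbf{1}$ is an \emph{initial} object of $\K$: for any finite poset $b$ the only map $b\to\mathbf{1}$ is the constant map, it is trivially onto, and the single relation of $\mathbf{1}$ is witnessed by any reflexive pair $x\leq_b x$, so it is a quotient map; hence $\K(\mathbf{1},b)$ is a singleton for every $b$. Once amalgamation is established, Lemma \ref{InitialLemma} yields directedness immediately. So the whole theorem reduces to the amalgamation property (treating the empty poset, if admitted, separately — there $a=b=c=\emptyset$ and everything is trivial).

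For amalgamation I would first unwind the reversed conventions of $\K$. An arrow $f\in\K(a,b)$ is a concrete quotient map $f\colon b\twoheadrightarrow a$, and $g\in\K(a,c)$ is a quotient map $g\colon c\twoheadrightarrow a$. Using the rule $g'\bullet g=g\circ g'$, the required square $f'\bullet f=g'\bullet g$ translates into finding a finite poset $d$ with quotient maps $f'\colon d\twoheadrightarrow b$ and $g'\colon d\twoheadrightarrow c$ such that $f\circ f'=g\circ g'$ as concrete maps $d\to a$. My candidate is the fibered product
\[
d:=\{(x,y)\in b\times c : f(x)=g(y)\},
\]
ordered as a subposet of the product poset (so $(x_1,y_1)\leq(x_2,y_2)$ iff $x_1\leq_b x_2$ and $y_1\leq_c y_2$), with $f'=\pi_b$ and $g'=\pi_c$ the coordinate projections. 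The square commutes by construction, since $f(\pi_b(x,y))=f(x)=g(y)=g(\pi_c(x,y))$, and $d$ is nonempty whenever $a$ is, by lifting any $\alpha\in a$ along the surjections $f$ and $g$.

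The heart of the argument — and the step I expect to be the main obstacle — is verifying that $\pi_b$ and $\pi_c$ are genuine \emph{quotient} maps and not merely order-preserving surjections; both surjectivity and the order-lifting condition crucially use that $f$ and $g$ are quotient maps rather than plain homomorphisms. For surjectivity of $\pi_b$, given $x\in b$ the element $f(x)\in a$ has some $g$-preimage $y$, so $(x,y)\in d$ maps to $x$. For the order-lifting condition, take $p\leq_b r$; then $f(p)\leq_a f(r)$ as $f$ is a homomorphism, and because $g$ is a quotient map there are $y_1,y_2\in c$ with $g(y_1)=f(p)$, $g(y_2)=f(r)$ and $y_1\leq_c y_2$, whence $(p,y_1),(r,y_2)\in d$, $(p,y_1)\leq(r,y_2)$, and $\pi_b$ maps them to $p$ and $r$. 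The argument for $\pi_c$ is symmetric, now invoking that $f$ (rather than $g$) is a quotient map to lift a chain $g(q)\leq_a g(s)$ into $b$. Assembling these pieces gives arrows $f'\in\K(b,d)$ and $g'\in\K(c,d)$ with the commuting square, establishing amalgamation; together with the initial object $\mathbf{1}$ and Lemma \ref{InitialLemma}, this proves both claims. The only genuine care needed is the bookkeeping of the reversed arrow directions in $\K$ when translating $\bullet$ back to ordinary composition.
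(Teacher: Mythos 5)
Your proof is correct, and the directedness part (the one\nobreakdash-point poset $\mathbf{1}$ is initial, then Lemma \ref{InitialLemma}) is exactly what the paper does; but your amalgamation argument takes a genuinely different route. The paper first invokes Lemma \ref{sticks} to replace $B$ and $C$ by disjoint unions of two-element chains $B'$ and $C'$ with quotient maps $f'\colon B'\to B$, $g'\colon C'\to C$, and then amalgamates over $A$ by taking $D:=B'\,\dot\cup\, C'$ and defining the two quotients $D\to B'$, $D\to C'$ piecewise on $\mathbf{2}$-components, using surjectivity and the lifting property of $f$ and $g$ case by case. You instead take the fibered product $d=\{(x,y)\in b\times c: f(x)=g(y)\}$ with the coordinatewise order and check that the projections are quotient maps; your verification is sound --- in particular the order-lifting step for $\pi_b$ correctly uses that $g$ (not $f$) is a quotient to lift the chain $f(p)\leq_a f(r)$ into $c$, and symmetrically for $\pi_c$ --- and your unwinding of the reversed arrows ($f'\bullet f=g'\bullet g$ becoming $f\circ f'=g\circ g'$ as concrete maps $d\to a$) is the right bookkeeping. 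Your pullback argument is shorter, self-contained (no need for Lemma \ref{sticks}), and more general: it shows quotient maps of posets are stable under pullback, with no finiteness used. What the paper's heavier construction buys is coherence with the rest of Section \ref{Section_Category_of_finite_posets}: the amalgam it produces is again a disjoint union of $\mathbf{2}$'s, and the ``sticks'' reduction establishes exactly the cofinality of such posets that underlies the explicit \Fraisse{} sequence $P_n\cong\dot\bigcup_{i<2\cdot 4^{n-1}}\mathbf{2}$ constructed immediately afterwards, whereas your pullback $d$ carries no such special form. One small caveat common to both proofs: the empty poset must be excluded from the category (as the paper tacitly does), since otherwise $\mathbf{1}$ is not initial and directedness itself fails; your parenthetical remark covers amalgamation for $\emptyset$ but the initial-object claim still needs $b\neq\emptyset$.
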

\begin{proof}
Firstly we show that this category has the amalgamation property.
Let $A, B$ and  $C$ be posets and let $f: B \to A$ and $g: C \to A$ be quotient maps. By Lemma \ref{sticks} there are $B^{'} \text{ and } C^{'}$, disjoint unions of $\mathbf{2}$'s, and $f^{'}:B^{'}  \to B \text{ and } g^{'}: C^{'}  \to C$ quotient mappings. If we find a poset $D$ and quotients $q: D  \to B^{'} \text{ and } p: D  \to C^{'}$ such that the diagram \begin{equation}
\begin{tikzcd}
    & B \arrow[ld, "f"']  && B^{'} \arrow[ll, "f^{'}"']    \\
A&&															&&D	\arrow[lu, "q"']	\arrow[ld, "p"] \\
	& C \arrow[lu, "g"]	&& C^{'} \arrow[ll, "g^{'}"] \\	  
\end{tikzcd}
\end{equation}
commutes, then we are done. 
Let $D:=B' \dot\cup C'$. We define $q: D \rightarrow B'$ as follows: if $z \in B'$ then $q(z)=z$. If $z \in C'$ then let $v \in C'$ be such that $z \leq_D v$ or $v \leq_D z$. We define $q$ at $z$ and $v$ at the same time, so we may assume that $z<_D v$.
\begin{enumerate}
\item[(i)] If $g\circ g'(z)=g\circ g'(v)$, then because $f$ is onto we find $x \in B'$ such that $f\circ f'(x)=g\circ g'(z)=g\circ g'(v)$. We put $q(z)=q(v)=x$.
\item[(ii)] If $g\circ g'(z) \neq g\circ g'(v)$, then $g\circ g'(z)<_A g\circ g'(v)$. Because $f$ is a quotient map we find $x,y \in B$ such that $x <_B y$ and $f\circ f'(x)=g\circ g'(z),f\circ f'(y)=g\circ g'(v)$. We put $q(z)=x,q(v)=y$.
\end{enumerate}
Defining $p$ in a similar fashion makes that $f \circ f'\circ q=g \circ g'\circ p$ and both $q,p$ are quotient maps.
So the functions $f'\circ q$ and $g'\circ p$ witness the amalgamation property in considered category. It is clear that $\mathbf{1}$, that is the one-element poset, is an initial object of our category. By Lemma \ref{InitialLemma} the category is directed.
\end{proof}

From Theorem \ref{DirectednessANDAmProp} and Theorem \ref{FraisseExistence} we get that there is a \text{\Fraisse} sequence in the category of finite posets. Now we are going to provide its direct construction.

\subsection{A universal projective model of profinite poset}\label{UniversalProjectivePoset}

The construction in this section is an adaptation of that presented in \cite{GGK} for graphs.  

Let $\mathbb{P}$ be a set $\lbrace 0,1,2,3 \rbrace ^{\w}$ with a partial order $x \leq y \Leftrightarrow x=y$ or one of the following conditions is met:
\begin{enumerate}
\item[(i)${}_\w$] $\exists_n \left[\left(\forall_{k<n} \text{ } x(k)=y(k)\right) \wedge \left(x(n)=2,y(n)=3\right) \wedge \left(\forall_{k>n} \text{ } x(k)=y(k) \in \lbrace 0,1 \rbrace \right)\right]$
\item[(ii)${}_\w$] $\left(x(0)=0,y(0)=1 \right) \wedge \left(\forall_{k \geq 1} \text{ } x(k)=y(k) \in \lbrace 0,1 \rbrace \right)$
\end{enumerate}
Note that this is a partial order. Let $P_n := \lbrace 0,1,2,3 \rbrace ^{n}$ for $n \in \N$. We define a partial ordering $x \leq_n y \Leftrightarrow x=y$ or one of the following conditions is met:
\begin{enumerate}
\item[(i)${}_n$] $\exists_{l<n}\left[\left( \forall_{k<l} \text{ } x(k)=y(k)\right) \wedge \left(x(l)=2,y(l)=3\right) \wedge \left(\forall_{k \in (l,n)} \text{ } x(k)=y(k) \in \lbrace 0,1 \rbrace \right)\right]$
\item[(ii)${}_n$] $\left(x(0)=0,y(0)=1 \right) \wedge \left(\forall_{k \in [1,n)} \text{ } x(k)=y(k) \in \lbrace 0,1 \rbrace \right)$
\end{enumerate}
The fact that two distinct $x,y\in P_n$ are in the order implies that they are equal at all but one coordinate. Moreover, there are no isolated points with respect to the order -- for any $x\in P_n$ we find distinct $y\in P_n$ comparable with $x$ by changing one appropriate coordinate of $x$. Therefore $P_n$ is isomorphic to $\dot\bigcup_{i < 2 \cdot 4^{n-1} } \mathbf{2}$. 
We define $p_n:P_{n+1} \to P_n$ such that $p_n(x):=x_{\vert_n}=(x(0),\dots,x(n-1))$ for $n \in \mathbb{N}$. Let us observe the following:
\begin{lem}
$p_n$ is a quotient map for all $n \in \N$.
\end{lem}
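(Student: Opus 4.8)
The plan is to check the three requirements in the definition of a quotient map for $p_n$ directly against the coordinatewise descriptions of $\leq_n$ and $\leq_{n+1}$. Surjectivity is immediate: any $p=(p(0),\dots,p(n-1))\in P_n$ is hit by its padding $(p(0),\dots,p(n-1),0)\in P_{n+1}$. It remains to show that $p_n$ preserves the order and that every comparability in $P_n$ lifts to one in $P_{n+1}$.

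For the homomorphism property I would assume $x<_{n+1}y$ and argue by cases on which clause witnesses it. If it is (ii)${}_{n+1}$, then $x(0)=0$, $y(0)=1$ and $x(k)=y(k)\in\{0,1\}$ for $1\le k\le n$; discarding the coordinate $n$ leaves exactly clause (ii)${}_n$ for $p_n(x),p_n(y)$. If it is (i)${}_{n+1}$ with witnessing index $l$, there are two subcases. When $l<n$, the same $l$ witnesses (i)${}_n$ for $p_n(x),p_n(y)$, since the agreement below $l$ and on $(l,n)$ is inherited from the agreement on $(l,n+1)$. When $l=n$, the disagreement $x(n)=2,\,y(n)=3$ sits precisely in the coordinate that $p_n$ forgets, so $p_n(x)$ and $p_n(y)$ agree everywhere, whence $p_n(x)=p_n(y)$. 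Thus $p_n$ is order-preserving.

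For the lifting property, given $p\le_n r$ I would take the paddings $x=(p(0),\dots,p(n-1),0)$ and $y=(r(0),\dots,r(n-1),0)$, so that $p_n(x)=p$ and $p_n(y)=r$, and verify $x\le_{n+1}y$. If $p=r$ then $x=y$; if $p\le_n r$ is witnessed by (ii)${}_n$ or by (i)${}_n$ with index $l<n$, then the appended coordinate $x(n)=y(n)=0\in\{0,1\}$ simply extends the range of coordinates on which $x$ and $y$ are equal and lie in $\{0,1\}$ from $[1,n)$ to $[1,n+1)$ (resp.\ from $(l,n)$ to $(l,n+1)$), so the very same clause now witnesses $x\le_{n+1}y$ in $P_{n+1}$. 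The only point requiring care is the bookkeeping of these index ranges---in particular the boundary case $l=n$ in the homomorphism step, where the sole disagreement of $x$ and $y$ is exactly the coordinate being projected away and the image collapses to one point---rather than any conceptual difficulty.
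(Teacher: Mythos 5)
Your proposal is correct and takes essentially the same approach as the paper: a case analysis on the clauses (i)${}_{n+1}$/(ii)${}_{n+1}$ for the homomorphism property (with the $l=n$ case collapsing to equality after projection), and padding with a final coordinate $0$ both for surjectivity and to lift a comparability $p\leq_n r$ to $P_{n+1}$. If anything, you verify the padded lift $x\leq_{n+1}y$ more carefully than the paper, which merely asserts it.
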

\begin{proof}
Firstly we show that it is a homomorphism. Let $x \leq_{n+1} y,x \neq y$. We consider three cases.\begin{enumerate}
\item[(i)] If $x_{\vert_n}=y_{\vert_n}$ then $p_n(x)=x_{\vert_n} \leq_n y_{\vert_n}=p_n(y)$
\item[(ii)] If $x(0)=0,y(0)=1$ and $\forall_{k \in [1,n +1)} \text{ } x(k)=y(k) \in \lbrace 0, 1 \rbrace$ then $p_n(x)=x_{\vert_n} \leq_n y_{\vert_n}=p_n(y)$
\item[(iii)] If $x(l)=2,y(l)=3 \text{ for some } l<n+1$ then $\forall_{k <l} \text{ } x(k)=y(k) \text{ and } \forall_{k \in (l,n+1)} \text{ } x(k)=y(k) \in \lbrace 0, 1 \rbrace$, so $p_n(x)=x_{\vert_n} \leq_n y_{\vert_n}=p_n(y)$. 
\end{enumerate}
Thus $p_n$ is a homomorphism. Since $p_n$ is onto, we only need to show that it is a quotient map. Let $a \leq_n b$ and $a,b \in P_n$. We define $a':=(a(0), \ldots, a(n-1), 0), b':=(b(0), \ldots, b(n-1), 0)$. Then $a' \leq_{n+1} b'$ and $p_n(a')=a,p_n(b')=b$.
\end{proof}

If $n>k$ then $p_k ^{n}=p_k \circ p_{k+1} \circ \ldots \circ p_{n-1}$ is the restriction to the first $k$ coordinates. Clearly $p_k ^{n+1}=p_k ^{n} \circ p_n$. We consider the sequence of posets 

\begin{equation}\label{SequanceOfUniwersalProjectivePosets}
\begin{tikzcd}
P_1 && \arrow[ll, "p_1"'] P_2 && \arrow[ll, "p_2"'] P_3 && \arrow[ll, "p_3"'] \ldots \\
\end{tikzcd}
\end{equation}

Note that, if we show that $\langle \{P_n\}_{n \in \N}, \{p_k ^n\}_{k<n} \rangle$ is a \text{\Fraisse} sequence in category of finite posets, then the inverse limit  $ \varprojlim \langle \{P_n\}_{n \in \N}, \{p_k ^n\}_{k<n} \rangle$ is the inverse $\text{\Fraisse}$ limit in this category. 
To show that $\mathbb{P}$ is topologically isomorphic to the inverse limit of $P_n$'s we must show the following.

\begin{lem}\label{OrderingsAreTheSameOnPandOnInverseLimit}
$x \leq y$ if and only if $p_n(x) \leq_n p_n(y) \text{ for all } n \in \mathbb{N}$.
\end{lem}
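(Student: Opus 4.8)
The plan is to prove both implications by unwinding the defining conditions (i)$_\w$, (ii)$_\w$ of the order on $\mathbb{P}$ and (i)$_n$, (ii)$_n$ of the order on $P_n$, using that $p_n(x)=x_{\vert_n}$ is the restriction of $x$ to its first $n$ coordinates. Throughout, when $x\neq y$ I would write $j$ for the least coordinate at which $x$ and $y$ differ. The whole argument hinges on one observation: both defining conditions completely pin down the pair $(x(j),y(j))$, so the type of the relation and the location of its witness are determined by $x$ and $y$ alone and do not depend on $n$.

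For the forward implication, assume $x\leq y$. If $x=y$ the conclusion is immediate by reflexivity, so suppose $x\neq y$ and split according to which of (i)$_\w$, (ii)$_\w$ holds. In case (i)$_\w$ with distinguished index $m$, I would fix $n$ and distinguish $n\leq m$ from $n>m$. If $n\leq m$ then all coordinates $0,\dots,n-1$ lie strictly below $m$, where $x$ and $y$ agree, so $p_n(x)=p_n(y)$ and hence $p_n(x)\leq_n p_n(y)$. If $n>m$ then $m<n$ is a legitimate witness for (i)$_n$ with $l=m$, since agreement below $m$, the values $x(m)=2,\,y(m)=3$, and the condition on $(m,n)\subset(m,\w)$ all transfer verbatim. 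Case (ii)$_\w$ directly yields (ii)$_n$ for every $n$, because the window $[1,n)$ is contained in $[1,\w)$. This half is pure case-checking.

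For the backward implication, assume $p_n(x)\leq_n p_n(y)$ for all $n$ and $x\neq y$, and let $j$ be the first coordinate of difference. For every $n>j$ we have $p_n(x)\neq p_n(y)$, so one of (i)$_n$, (ii)$_n$ must witness $p_n(x)\leq_n p_n(y)$. I would first record that these alternatives are mutually exclusive and that each fixes $(x(j),y(j))$: condition (i)$_n$ forces its witness $l$ to be the first point of difference, whence $l=j$ and $(x(j),y(j))=(2,3)$, while (ii)$_n$ forces $j=0$ and $(x(0),y(0))=(0,1)$. Since these facts refer only to coordinate values, the same alternative holds for every $n>j$. In the (i) alternative I would collect, over all $n>j$, the conclusions $x(k)=y(k)\in\{0,1\}$ for $k\in(j,n)$; as every coordinate above $j$ is captured once $n$ is large, this gives $x(k)=y(k)\in\{0,1\}$ for all $k>j$, which together with agreement below $j$ and $(x(j),y(j))=(2,3)$ is exactly (i)$_\w$. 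The (ii) alternative is handled identically, assembling $x(k)=y(k)\in\{0,1\}$ for all $k\geq 1$ into (ii)$_\w$. In either case $x\leq y$.

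I expect the only genuine subtlety to be the coherence step in the backward direction: a priori the witness $l$ and the choice between (i)$_n$ and (ii)$_n$ could drift as $n$ grows, and the proof must rule this out before the finite data can be glued into a single $\w$-witness by letting $n\to\infty$. The remark that both the type and the witness are dictated by the position and values of the first disagreement between $x$ and $y$ is precisely what freezes them across all $n$, and this is the one point I would make sure to state explicitly rather than leave to routine verification.
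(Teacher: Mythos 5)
Your proof is correct and takes essentially the same route as the paper's: the forward direction is routine case-checking (which the paper dismisses as immediate), and the backward direction locates the first coordinate of disagreement (your $j$ is the paper's $m-1$) and assembles the finite witnesses into a single $\w$-witness via the case split on which alternative applies. Your explicit argument that the witness type and location cannot drift with $n$ is a useful detail the paper leaves implicit, but it is not a different approach.
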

\begin{proof}
The \textit{only if} part of equivalence is immediate. 
So let us assume that $p_n(x) \leq_n p_n(y) \text{ for all } n \in \mathbb{N}$. If $p_n(x) = p_n(y) \text{ for all } n \in \mathbb{N}$ then $x=y$. Otherwise we choose $m:=\min \lbrace k \in \mathbb{N}: p_k(x) \neq p_k(y) \rbrace$ and consider the following cases:
\begin{enumerate}
\item[(i)]  If $m>1$ then for all $j<m-1$ we have $x(j)=y(j)$, $x(m-1)=2,y(m-1)=3$ and for all $j>m-1$ we have $x(j)=y(j) \in \lbrace 0, 1 \rbrace$.
\item[(ii)] If $m=1$ then $x(0)=0,y(0)=1$ and for all $j \geq 1$ we have $x(j)=y(j) \in \lbrace 0, 1 \rbrace$ or $x(0)=2,y(0)=3$ and for all $j \geq 1$ we have $x(j)=y(j) \in \lbrace 0, 1 \rbrace$.
\end{enumerate}
In both cases $x \leq y$.
\end{proof}
\begin{thm}\label{PIsTopIsoToInvFrLim}
Let $\phi:\mathbb{P} \to \varprojlim \langle  \{P_n\}_{n \in \N},\{p^n_k \}_{k < n} \rangle$ be defined as follows
$$\phi(x):=(x_{|1}, x_{|2}, \dots)$$
Then $\phi$ is topological isomorphism.
\end{thm}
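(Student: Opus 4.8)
The plan is to verify in turn the four things that ``topological isomorphism'' requires: that $\phi$ really lands in the inverse limit, that it is a bijection, that it preserves and reflects the order, and that it is a homeomorphism. The order part will be essentially free, since it is exactly the content of Lemma \ref{OrderingsAreTheSameOnPandOnInverseLimit}; the topological part will follow from a compactness argument rather than a direct verification that $\phi^{-1}$ is continuous.

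First I would check that $\phi$ is well defined, i.e. that $\phi(x)$ satisfies the thread condition defining the inverse limit. Since $p_n$ is restriction to the first $n$ coordinates, for every $x\in\mathbb{P}$ and every $n\in\N$ we have $p_n(x_{|n+1})=x_{|n}$, so the coherence requirement $p_n^{n+1}(x_{|n+1})=x_{|n}$ holds and $\phi(x)\in\varprojlim\langle\{P_n\}_{n\in\N},\{p^n_k\}_{k<n}\rangle$. Next, for bijectivity: injectivity is immediate, because $\phi(x)=\phi(y)$ forces $x_{|n}=y_{|n}$ for all $n$, hence $x=y$. For surjectivity I would start from an arbitrary thread $(z_n)_n$ in the limit; coherence says precisely that $z_n$ is the first-$n$-coordinate restriction of $z_{n+1}$, so the sequences $z_n$ glue into a single point $x\in\{0,1,2,3\}^\w$ via $x(k):=z_{k+1}(k)$, and then $x_{|n}=z_n$ for every $n$, giving $\phi(x)=(z_n)_n$.

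For the order, I would simply invoke Lemma \ref{OrderingsAreTheSameOnPandOnInverseLimit}: $x\leq y$ in $\mathbb{P}$ if and only if $x_{|n}\leq_n y_{|n}$ for all $n\in\N$, and the right-hand condition is by definition exactly $\phi(x)\,E\,\phi(y)$ for the order $E$ on the inverse limit. Thus $\phi$ both preserves and reflects the order, and being already a bijection it is an order-isomorphism. It remains to show $\phi$ is a homeomorphism. Here $\mathbb{P}=\{0,1,2,3\}^\w$ is compact as a product of finite discrete spaces, and the inverse limit is a closed subset (Lemma \ref{Inverse_Limit_Closed}) of the compact metrizable product $\prod_{i\in\N}P_i$, hence compact Hausdorff. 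The map $\phi$ is continuous because its $n$-th coordinate $x\mapsto x_{|n}$ depends only on the first $n$ coordinates of $x$, so each coordinate map is continuous and therefore so is $\phi$ into the product topology. A continuous bijection from a compact space onto a Hausdorff space is automatically a homeomorphism, which together with the order-isomorphism established above shows $\phi$ is a topological isomorphism.

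I do not expect a genuine obstacle here, as the decisive step---matching the two orderings---has already been carried out in Lemma \ref{OrderingsAreTheSameOnPandOnInverseLimit}. The two points deserving slight care are the reconstruction in the surjectivity argument, where one must use coherence to see that the finite restrictions $z_n$ assemble into a single element of $\mathbb{P}$, and the decision to obtain continuity of $\phi^{-1}$ from compactness rather than by an explicit cylinder computation; the latter keeps the argument short and avoids re-checking that $\phi$ maps basic clopen sets to basic clopen sets.
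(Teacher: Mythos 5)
Your proposal is correct and follows essentially the same route as the paper: injectivity and surjectivity are established by the same coordinate-gluing argument, the order part is delegated to Lemma \ref{OrderingsAreTheSameOnPandOnInverseLimit}, and the homeomorphism is obtained from continuity plus the compact-to-Hausdorff principle, exactly as in the paper. The only cosmetic difference is that you verify continuity coordinate-wise while the paper computes $\phi^{-1}$ of a basic cylinder explicitly; these are interchangeable one-line arguments.
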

\begin{proof}
Firstly, we will show that $\phi$ is one-to-one. Take $x=(x_0, x_1, \dots), y= (y_0, y_1, \dots) \in \mathbb{P}$ and assume $\phi(x)=\phi(y)$. Then $x_{|n}=y_{|n}$ for each $n \in \N$, which means that $x=y$ and consequently $\phi$ is one-to-one.

Now we show that $\phi$ is onto. Let $\mathbf{a}=(a_1,a_2,\dots) \in \varprojlim \langle \{P_n\}_{n \in \N},\{p^n_k \}_{k < n} \rangle$. Then $p_n(a_{n+1})=a_{n+1}|_n=a_n$. Let $x_n=a_{n+1}(n)$ and $x=(x_0,x_1,\dots)$. Then $\mathbf{a}=(x_{|1}, x_{|2}, \dots)$ and $\phi(x)=\mathbf{a}$.

Now we will show that $\phi$ is continuous. Take any basic set \[(\{x_{|1}\} \times \{x_{|2}\} \times \dots \times \{x_{|n+1}\} \times P_{n+1} \times \dots) \cap  \varprojlim \langle  \{P_n\}_{n \in \N},\{p^n_k \}_{k < n} \rangle.\] Then \[\phi^{-1}((\{x_{|1}\} \times \{x_{|2}\} \times \dots \times \{x_{|n+1}\} \times P_{n+1} \times \dots) \cap  \varprojlim \langle  \{P_n\}_{n \in \N},\{p^n_k \}_{k < n} \rangle)=\] \[\{x_0\} \times \{x_1\} \times \dots \times \{x_n\} \times \{0, 1, 2, 3\} \times \dots,\] which means $\phi$ is continuous. Since $\mathbb{P}$ is compact, $\phi$ is a homeomorphism. By Lemma \ref{OrderingsAreTheSameOnPandOnInverseLimit} $\phi$ is a topological isomorphism.
\end{proof}
Our next step is to show that sequence \eqref{SequanceOfUniwersalProjectivePosets} is a \text{\Fraisse} sequence.
\begin{prop}
A sequence \eqref{SequanceOfUniwersalProjectivePosets} is a \text{\Fraisse} sequence in the category of finite posets with quotient maps.
\end{prop}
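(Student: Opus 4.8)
The plan is to verify the two defining conditions of a Fra\"\i ss\'e sequence, namely \hyperlink{UProperty}{(U)} and \hyperlink{AProperty}{(A)}. Since the category of finite posets is directed (Theorem \ref{DirectednessANDAmProp}) and $\langle\{P_n\}_{n\in\N},\{p_k^n\}_{k<n}\rangle$ is an inductive sequence, Lemma \ref{APropertyImpliesUProperty} reduces the whole task to \hyperlink{AProperty}{(A)}: I only need to show that for every $k$ and every arrow $f\in\K(P_k,y)$ -- that is, every quotient map $f\colon y\to P_k$ -- there are $\ell>k$ and a quotient map $g\colon P_\ell\to y$ whose underlying composite satisfies $f\circ g=p_k^\ell$ (recall that $u_k^\ell=p_k^\ell$ is the restriction to the first $k$ coordinates, and that the composition in $\K$ unwinds to ordinary composition of the underlying maps).

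First I would record the fibre structure of $p_k^\ell$. For $c\in P_k$ write $F_c:=(p_k^\ell)^{-1}(c)=\{c\}\times\{0,1,2,3\}^{\ell-k}$. Inspecting the conditions defining $\leq_\ell$ shows that a comparability of $P_\ell$ either stays inside one fibre $F_c$ -- exactly the relations coming from a $2$--$3$ change at a coordinate $\geq k$, which turn $F_c$ into a disjoint union of $\mathbf{2}$'s together with the points whose tail lies in $\{0,1\}^{\ell-k}$ left isolated inside the fibre -- or else joins $F_c$ to $F_{c'}$ for a $\mathbf{2}$-component $\{c,c'\}$ of $P_k$ with $c<c'$, and such a cross relation pairs $(c,\sigma)$ with $(c',\sigma)$ for a common tail $\sigma\in\{0,1\}^{\ell-k}$. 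In particular $P_\ell$ splits as the disjoint union, over the $\mathbf{2}$-components $\{c,c'\}$ of $P_k$, of the blocks $F_c\,\dot\cup\,F_{c'}$, and each block is itself a disjoint union of $\mathbf{2}$'s of two kinds: the within-fibre $\mathbf{2}$'s and the cross $\mathbf{2}$'s $(c,\sigma)<(c',\sigma)$.

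On the side of $y$ I would set $Y_c:=f^{-1}(c)$ and observe, using that $f$ is a homomorphism and that $P_k$ is a disjoint union of $\mathbf{2}$'s (so every point is comparable only to its partner), that each comparability of $y$ lies inside a single block $W:=Y_c\cup Y_{c'}$ and is either a within-fibre relation or a cross relation $u<_y v$ with $u\in Y_c$, $v\in Y_{c'}$. Crucially, because $f$ is a \emph{quotient} map, the relation $c<c'$ lifts, so every block $W$ contains at least one cross relation. I would then define $g$ block by block, forced by $f\circ g=p_k^\ell$ to send $F_c$ into $Y_c$: assign to each comparable pair inside $Y_c$ (resp.\ $Y_{c'}$) its own within-fibre $\mathbf{2}$ of $F_c$ (resp.\ $F_{c'}$), collapse one further within-fibre $\mathbf{2}$ onto each element of $Y_c$ and of $Y_{c'}$ to secure surjectivity, and route each cross relation of $W$ through a cross $\mathbf{2}$. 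Since within a block the only relations of $P_\ell$ are these $\mathbf{2}$'s, any such assignment is automatically an order homomorphism, and by construction it is onto and lifts every relation of $W$, i.e.\ it is a quotient map.

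Choosing $\ell$ so large that the numbers $(4^{\ell-k}-2^{\ell-k})/2$ and $2^{\ell-k}$ of within-fibre and of cross $\mathbf{2}$'s in each block exceed the (finitely many) comparable pairs and elements to be covered, the construction goes through; the surplus $\mathbf{2}$'s are harmless, as each may be collapsed onto a point of the appropriate fibre or sent again to an already-used relation. Gluing the block maps produces the desired $g$ with $f\circ g=p_k^\ell$, establishing \hyperlink{AProperty}{(A)} and hence \hyperlink{UProperty}{(U)}. The one genuinely delicate point is that a cross $\mathbf{2}$ cannot be absorbed inside a single fibre -- its endpoints are forced into the distinct fibres $Y_c$ and $Y_{c'}$ -- so it must land on a cross relation of $y$; this is exactly where the hypothesis that $f$ is a quotient map (not merely a surjective homomorphism) is indispensable, since it is what guarantees that such a cross relation exists in every block.
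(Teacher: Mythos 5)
Your proof is correct and follows essentially the same strategy as the paper's: reduce to property (A) via directedness and Lemma \ref{APropertyImpliesUProperty}, decompose $P_\ell$ into blocks over the $\mathbf{2}$-components of $P_k$, classify relations of $y$ as within-fibre or cross, choose $\ell$ large enough by counting, and build the quotient block by block, assigning dedicated $\mathbf{2}$'s to relations and points, collapsing surplus within-fibre $\mathbf{2}$'s onto points, and sending surplus cross $\mathbf{2}$'s to an already-used cross relation (whose existence, as you correctly isolate, is exactly where the quotient hypothesis on $f$ enters — the paper's fixed pair $(r,b)\in LUP$). Your exact count $(4^{\ell-k}-2^{\ell-k})/2$ of within-fibre $\mathbf{2}$'s refines the paper's lower bound $2^{\ell-k-1}$, but this is a cosmetic difference, not a different argument.
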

\begin{proof}
From Lemma \ref{APropertyImpliesUProperty} we get that, if our sequence has \hyperlink{AProperty}{(A)} property, then it also has \hyperlink{UProperty}{(U)} property as $\K$ is directed. So we only need to show that our sequence has \hyperlink{AProperty}{(A)} property.

Let $H$ be a finite poset and let $p:H \rightarrow P_k$ be a quotient map. Note that for every $x \in P_k$ there is only one $y \in P_k$ such that $x \leq_k y$ and $x \neq y$. Since $P_n$ is isomorphic to $\dot\bigcup_{i < 2 \cdot 4^{n-1} } \mathbf{2}$ for all $n \in \N$, then the cardinality of $\{(x,y)\in P_k\times P_k:x<_ky\}$ equals $2\cdot 4^{k-1}$. Let $D_i:= \lbrace x_i, y_i \rbrace$ be such that $x_i <_k y_i$ for $i=1,\dots,2 \cdot 4^{k-1}$. Let $m>k$. Then for every $(x,y)$ with $x<_ky$ we have the following:
\begin{enumerate}
\item We have at least $2^{m-k-1}$ pairs $(u,v)$ with  $u <_m v$ and such that $p_k ^{m}(u)=p_k ^{m}(v)=x$. Indeed if we take pairs $(u,v)$ of the form $u=x\hat{\;\;}2\hat{\;\;}t$ and $v=x\hat{\;\;}3\hat{\;\;}t$ where $t$ is 0-1 sequence of the length $m-k-1$, we get that $p_k ^{m}(u)=p_k ^{m}(v)=x$ and there is  $2^{m-k-1}$ of such pairs. The same applies if we substitute $x$ with $y$.
\item We have $2^{m-k}$ pairs $(u,v)$ with  $u <_mv$ and such that $p_k ^{m}(u)=x,p_k ^{m}(v)=y$; they are of the form $u=x\hat{\;\;}t$ and $v=y\hat{\;\;}t$ where $t$ is 0-1 sequence of the length $m-k$.
\end{enumerate}
Additionally we will show that $(p_k^m)^{-1}(\{x,y\})$ does not contain isolated points. Let $v\in (p_k^m)^{-1}(\{x,y\})$. Since $P_m$ does not contain isolated points, there is $w$ with $w<_m v$ or $v<_m w$. Since $p_k^m$ is order-preserving, $p_k^m(w)$ is comparable with $p_k^m(v)\in\{x,y\}$. Thus $p_k^m(w)\in\{x,y\}$, and therefore $v$ is not isolated in $(p_k^m)^{-1}(\{x,y\})$.    

Now we fix $m$ such that $2^{m-k-1}$ is greater than the cardinality of $\{(x,y):x \leq_H y \}$. For every $i \leq 2 \cdot 4^{k-1}$ we will find a quotient $g_i :(p_k ^{m})^{-1} \left(D_i\right) \rightarrow p^{-1} \left(D_i\right)$ such that diagram \begin{equation}\label{g_i diagram}
\begin{tikzcd}
D_i && (p_k ^{m})^{-1} \left(D_i\right) \arrow[ll, "p_k ^{m}"'] \arrow[ld,dashed, "g_i"] \\
	& p^{-1} \left(D_i\right) \arrow[lu, "p"]
\end{tikzcd}
\end{equation}
commutes.

Suppose that we have already found such $g_i$'s. We define $g:P_m \rightarrow H$ as follows:
$g(x)=g_i(x)$ provided $x \in (p_k ^{m})^{-1} \left(D_i\right)$. Since $D_i \cap D_j = \emptyset$ for $i \neq j$, $g$ is well-defined. Let $y\in H$. Then $p(y) \in P_k=\bigcup_i D_i$, thus there is $i$ with $p(y) \in D_i$, which implies $y \in p^{-1}(D_i)$. Since  $g_i:(p_k ^m)^{-1}(D_i) \to p^{-1}(D_i)$ is a quotient map we can find $x \in (p_k ^m)^{-1}(D_i)$ with $g_i(x)=y$. Then $g(x)=g_i(x)=y$, thus $g$ is onto. 
Assume that $x\leq_m x'$. Then $p^m_k(x)\leq_k p^m_k(x')$ and $p^m_k(x), p^m_k(x')$ belong to the same $D_i$. Since $g_i$ is homomorphism, then $g(x)=g_i(x)\leq_H g_i(x')=g(x')$ and $g$ is homomorphism as well. Let $u,u'\in P_k$ with $u<_k u'$. There is $i$ with $D_i=\{u,u'\}$. Since $p$ is a quotient map, there are $y,y'\in p^{-1}(D_i)$ with $y<_H y'$ and $p(y)=u$, $p(y')=u'$. Since $g_i$ is a quotient map, there are $x<_m x'$ with $g_i(x)=u$ and $g_i(x')=u'$, and $g$ is a quotient map as well.  
By \eqref{g_i diagram} the diagram \begin{equation}
\begin{tikzcd}
P_k && P_m \arrow[ll, "p_k ^{m}"'] \arrow[ld, "g"] \\
	& H \arrow[lu, "p"] \\
\end{tikzcd}
\end{equation}
commutes.

Now it is enough to construct $g_i$'s. Let $D_i= \lbrace L, U \rbrace$ with $L<U$. Now we will categorize points and edges in $p^{-1}(D_i)\subseteq H$. Isolated points of $p^{-1}(D_i)$ we divide into L-isolated (denoted by $LI$) and U-isolated (denoted by $UI$) if $p$ maps them onto $L$ and $U$ respectively. Further, pairs $(u,v)$ with $u<_H v$ in $p^{-1}(D_i)$ are divided into $LP$ if $p(u)=p(v)=L$, $UP$ if $p(u)=p(v)=U$ and $LUP$ if $p(u)=L,p(v)=U$. Since $p$ is a quotient map then $LUP \neq \emptyset$. Let us fix $r,b \in p^{-1}(D_i)$ such that $(r,b) \in LUP$. 

Now we will do a similar thing for $(p_k ^{m})^{-1} \left(D_i\right)$. We have already noticed that it does not contain any isolated points. We divide its pairs $(x,y)$ with $x <_m y$ into three categories:
L-type $\mathbf{L}$ if $p_k ^{m}$ maps $x$ and $y$ into $L$;
U-type $\mathbf{U}$ if $p_k ^{m}$ maps $x$ and $y$ into $U$;
LU-type $\mathbf{LU}$ if $p_k ^{m}$ maps $x$ into $L$ and $y$ into $U$.

Let $\ell_1=|LI|,\ell_2=|UI|,\ell_3=|LP|,\ell_4=|UP|,\ell_5=|LUP|$. Then $\ell_1,\ell_2,\ell_3,\ell_4 \geq 0$ and $\ell_5 \geq 1$. By definition of $m$, $\ell_1$ and $\ell_3$ we obtain $|\mathbf{L}| \geq 2^{m-k-1} \geq |\{(x,y):x\leq_H y\}|$. Note that we can identify the set $LI$ with set of pairs $\{(x,x): x \in LI\}$ and this set is disjoint with $LUP$, and therefore  $|\{(x,y):x\leq_H y\}| \geq \ell_1 +\ell_3$. So we can find pairwise disjoint $\mathbf{L}_1, \mathbf{L}_2, \mathbf{L}_3$ such that $\mathbf{L}_1 \cup \mathbf{L}_2 \cup \mathbf{L}_3 = \mathbf{L}$ and $|\mathbf{L}_1|=\ell_1,|\mathbf{L}_2|=\ell_3$. Similarly we divide $\mathbf{U}$ into pairwise disjoint sets $\mathbf{U}_1, \mathbf{U}_2, \mathbf{U}_3$ such that $\mathbf{U}_1 \cup \mathbf{U}_2 \cup \mathbf{U}_3 = \mathbf{U}$ and $|\mathbf{U}_1|=\ell_2,|\mathbf{U}_2|=\ell_4$. Now we partition $\mathbf{LU}$ into $\mathbf{LU}_1, \mathbf{LU}_2$ with $|\mathbf{LU}_1|=\ell_5$. Now we are ready to define $g_i$. Since $P_m$ is isomorphic to $\dot\bigcup_{i < 2 \cdot 4^{m-1} } \mathbf{2}$, then we will define $g_i$ for each pair $x,y \in P_m$ with $x<_m y$ as follows :
\begin{itemize}
\item[(1)] if $(x,y)$ is a $j$-th pair in $\mathbf{L}_1$, then $g_i(x)=g_i(y)$ is the $j$-th element of $LI$;
\item[(2)] if $(x,y)$ is a $j$-th pair in $\mathbf{L}_2$, then $(g_i(x),g_i(y))$ is the $j$-th pair of $LP$;
\item[(3)] if $(x,y)$ is in $\mathbf{L}_3$, then $g_i(x)=g_i(y)=r$ (recall that $r,b \in p^{-1}(D_i)$ have been fixed such that $(r,b) \in LUP$);
\item[(4)] if $(x,y)$ is a $j$-th pair in $\mathbf{U}_1$, then $g_i(x)=g_i(y)$ is the $j$-th element of $UI$;
\item[(5)] if $(x,y)$ is a $j$-th pair in $\mathbf{U}_2$, then $(g_i(x),g_i(y))$ is the $j$-th pair of $UP$;
\item[(6)] if $(x,y)$ is a $j$-th pair in $\mathbf{U}_3$, then $g_i(x)=g_i(y)=b$;
\item[(7)] if $(x,y)$ is a $j$-th pair in $\mathbf{LU}_1$, then $(g_i(x),g_i(y))$ is a $j$-th pair in $LUP$; 
\item[(8)] if $(x,y)$ is in $\mathbf{LU}_2$, then $(g_i(x),g_i(y))=(r,b)$.
\end{itemize}
By (1), (2), (4), (5) and (7) $g_i$ is onto $p^{-1}(D_i)$. Fix $(x,y)\in (p^m_k)^{-1}(D_i)\times (p^m_k)^{-1}(D_i)$ with $x <_m y$. If $p^m_k(x)=p^m_k(y)=L$, then $p(g_i(x))=p(g_i(y))=L$ by (1)--(3). If $p^m_k(x)=p^m_k(y)=U$, then $p(g_i(x))=p(g_i(y))=U$ by (4)--(6). If $p^m_k(x)=L$ and $p^m_k(y)=U$, then $p(g_i(x))=L$ and $p(g_i(y))=U$ by (7)--(8). This shows that diagram \eqref{g_i diagram} commutes. The fact that $g_i$ is a homomorphism follows easily from (1)-(8). Note that $g_i$ is quotient map by (2), (5) and (7). 
\end{proof}

\subsection{Properties of the inverse \text{\Fraisse} limit $\mathbb{P}$}
Now we shall proof that in fact $\mathbb{P}$ is universal, in other words every profinite poset is an image of some continuous quotient mapping from $\mathbb{P}$. To achieve this we need to proof the following lemma.
\begin{lem}\label{FiniteSequencesLemma}
Let $(G_i)_{i \in \w}$ be a sequence of non-empty, finite sets and $q_k ^ i:G_i \rightarrow G_k$ be functions such that $q^k_\ell\circ q^i_k=q^i_\ell$ for every $\ell<k<i$. We define \[T_i := \lbrace (a_0, a_1, \ldots, a_i) \in G_0 \times G_1 \times \ldots \times G_i:q_k ^i(a_i)=a_k \text{ for every } k < i \rbrace\] for $i \in \w$. If $T_i$ is non-empty for every $i \in \w$ then there exists a sequence $(x_n) \in \prod_{i \in \w} G_i$ such that $q_n ^{n+1}(x_{n+1})=x_n$ for all $n \in \w$.
\end{lem}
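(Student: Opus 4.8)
The plan is to recast the statement as the non-emptiness of the inverse limit of a system of non-empty finite sets, and to prove it by König's lemma applied to the tree assembled from the $T_i$.

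First I would organise the sets $T_i$ into a single tree $T := \bigcup_{i \in \w} T_i$, declaring a node $(a_0, \ldots, a_{i+1}) \in T_{i+1}$ to be a child of $(a_0, \ldots, a_i) \in T_i$. For this to make sense I must verify the \emph{restriction property}: every element of $T_{i+1}$ truncates to an element of $T_i$. This is the one point where the cocycle condition $q^k_\ell \circ q^i_k = q^i_\ell$ is genuinely used. Indeed, given $(a_0, \ldots, a_{i+1}) \in T_{i+1}$ we have in particular $q_i^{i+1}(a_{i+1}) = a_i$, and then for each $k < i$ the compatibility relation (with lower index $k$, middle index $i$, top index $i+1$) yields $q_k^i(a_i) = q_k^i(q_i^{i+1}(a_{i+1})) = q_k^{i+1}(a_{i+1}) = a_k$, so $(a_0, \ldots, a_i) \in T_i$. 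Hence $T$ is genuinely a tree whose roots are the finitely many elements of $T_0 = G_0$.

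Next I would record the two hypotheses of König's lemma. The tree is \emph{finitely branching}, since a node at level $i$ has at most $\abs{G_{i+1}}$ children (each child is pinned down by its last coordinate, which lies in the finite set $G_{i+1}$). The tree is \emph{infinite}, since by assumption every $T_i$ is non-empty, so $T$ has at least one node at each of the infinitely many levels $i \in \w$. König's lemma then produces an infinite branch $(a_0),\,(a_0, a_1),\,(a_0, a_1, a_2),\,\ldots$, and setting $x_n := a_n$ defines a point $(x_n) \in \prod_{i \in \w} G_i$. To finish I would check the conclusion directly: for each $n$ the initial segment $(x_0, \ldots, x_{n+1})$ is the node $(a_0, \ldots, a_{n+1}) \in T_{n+1}$, and membership in $T_{n+1}$ gives $q_n^{n+1}(x_{n+1}) = x_n$, exactly as required.

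Once the tree is set up the argument is routine, and the only delicate step is the restriction property above, which is precisely where the compatibility among the $q^i_k$ enters; everything else is bookkeeping forced by the finiteness and non-emptiness hypotheses. I would add that one can dispense with citing König's lemma and argue by compactness instead: view $\prod_{i \in \w} G_i$ as a compact metrizable space, and consider the cylinders $F_i := \{(x_j)_{j\in\w} : (x_0, \ldots, x_i) \in T_i\}$. Each $F_i$ is non-empty (extend any element of $T_i$ arbitrarily, using $G_j \neq \emptyset$), clopen (it depends on finitely many coordinates), and the restriction property makes the family decreasing, so $F_{i+1} \subseteq F_i$. By the finite intersection property $\bigcap_{i \in \w} F_i \neq \emptyset$, and any point in this intersection is the sought compatible sequence. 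Either route yields the claim.
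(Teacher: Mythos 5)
Your proof is correct, and both of your routes (K\"onig's lemma and compactness) go through; the one delicate step you isolate --- deducing from the cocycle condition $q^k_\ell\circ q^i_k=q^i_\ell$ that every element of $T_{i+1}$ truncates to an element of $T_i$ --- is exactly the observation the paper also makes, phrased there as $p_k^i[T_i]\subseteq T_k$ for the truncation maps $p_k^i(a_0,\dots,a_i)=(a_0,\dots,a_k)$. Where you diverge is in how the combinatorial core is discharged. The paper invokes neither K\"onig's lemma nor topology: it notes that the images $p_0^i[T_i]$ form a decreasing sequence of non-empty finite sets, hence stabilize; it sets $V_k:=\bigcap_{i} p_k^{i+k}[T_{i+k}]$, checks that $p_k^i[V_i]=V_k$ for $k\leq i$, and then greedily extends a sequence coordinate by coordinate while staying inside the sets $V_k$. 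Your tree makes the relationship transparent: $V_k$ is precisely the set of level-$k$ nodes of your tree whose subtree is infinite, and the paper's greedy extension is the standard proof of K\"onig's lemma inlined for this particular tree (your tree also has the finitely many elements of $T_0$ as roots rather than a single root, but adding a formal root fixes this trivially). So your argument is shorter and identifies the statement for what it is --- non-emptiness of an inverse limit of non-empty finite sets --- at the price of citing an external lemma or Tychonoff-type compactness; the paper's version is longer but self-contained and purely finitary, which fits its stated aim of spelling out complete elementary arguments.
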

\begin{proof}
Let us define $p_k ^i(a_0, a_1, \ldots, a_i)=(a_0, a_1, \ldots, a_k)$ for $k  \leq i$. Note that the condition $q^i_k(a_i)=a_k$ for every $k<i$ implies $q^k_\ell(a_k)=a_\ell$ for every $\ell<k<i$. 
Therefore $p_k ^i \left[T_i \right] \subseteq T_k$ for all $i \in \w$ and $k < i$. 
Thus $p_0 ^{i+1} \left[T_{i+1} \right]= p_0 ^{i} \left[p_i ^{i+1}\left[T_{i+1} \right]\right] \subseteq p_0 ^i \left[ T_i \right]$ for every $i$. Furthermore as $T_i$ are finite non-empty sets then $p_0 ^i \left[ T_i \right]$ are finite and not-empty. Because of that $\left(p_0 ^i \left[ T_i \right]\right)_{i \in \N}$ is a decreasing sequence of non-empty finite sets. Eventually this sequence must stabilize so $\bigcap_{i \in \N} p_0 ^i \left[ T_i \right]=p_0 ^ \ell\left[T_\ell \right] \neq \emptyset$ for some $\ell \in \w$. Similarly $\bigcap_{i\in\N}p_k ^{i+k}\left[T_{i+k} \right]\neq\emptyset$ for all $k \in \w$. Let $V_k:=\bigcap_{i \in \N} p_k ^{i+k}\left[T_{i+k} \right]$ for $k \in \w$. Note that $p_k ^i \left[V_i \right]=V_k$ for every $k \leq i$. Now we are ready to define the desired sequence. 

Let $x_0 \in G_0 $ such that $x_0 \in V_0$. As $p_0 ^1 \left[V_1 \right]=V_0$ we find $x_1 \in G_1$ such that $(x_0, x_1) \in V_1$, thus $q_0 ^1(x_1)=x_0$. In general we can assume that we have $(x_0, x_1 \ldots, x_n)$ such that $q_k ^{k+1}(x_{k+1})=x_k$ for $k < n$. We can find $x_{n+1} \in G_{n+1}$ such that $(x_0, x_1, \ldots, x_{n+1}) \in V_{n+1}$ because $p_n ^{n+1} \left[V_{n+1} \right]=V_n$. Therefore $q_n ^{n+1}(x_{n+1})=x_n$. So we can find a sequence $(x_0, x_1, \ldots) \in \prod_{i \in \w} G_i$ such that $q_n ^{n+1}(x_{n+1})=x_n$ for every $n \in \w$.
\end{proof}

In sequel we will use the following simple observation. 
\begin{obs}\label{TwoCommutingDiagrams}
Note that if the following diagrams
\begin{equation*}
\begin{tikzcd}
A \arrow[d] & \arrow[l] \arrow[d] B && B \arrow[d] & \arrow[l] E \arrow[d]\\
C & \arrow[l] D && D & \arrow[l] F \\
\end{tikzcd}
\end{equation*}
commute, then diagram
\begin{equation*}
    \begin{tikzcd}
    A \arrow[d] & \arrow[l] E \arrow[d] \\
    C & \arrow[l] F\\
    \end{tikzcd}
\end{equation*}
commutes as well.
\end{obs}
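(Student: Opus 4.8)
The plan is to label the arrows of the two given squares and then reduce the claim to a single application of associativity of composition. Reading the first tikzcd picture, the left square consists of $\alpha\colon A\to C$ (left vertical), $\beta\colon B\to A$ (top), $\gamma\colon B\to D$ (right vertical) and $\delta\colon D\to C$ (bottom); its commutativity reads $\alpha\circ\beta=\delta\circ\gamma$ as arrows $B\to C$. The right square consists of the \emph{same} arrow $\gamma\colon B\to D$, now playing the role of the left vertical, together with $\epsilon\colon E\to B$ (top), $\zeta\colon E\to F$ (right vertical) and $\eta\colon F\to D$ (bottom); its commutativity reads $\gamma\circ\epsilon=\eta\circ\zeta$ as arrows $E\to D$. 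The key structural observation is that the two squares are glued along the shared edge $\gamma$, so that $B$ and $D$ form the middle column of the horizontal concatenation and get erased when the two squares are pasted into one rectangle.

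In the composite rectangle the top arrow $E\to A$ is $\beta\circ\epsilon$, the bottom arrow $F\to C$ is $\delta\circ\eta$, and the verticals are $\alpha\colon A\to C$ and $\zeta\colon E\to F$. Thus commutativity of the big diagram amounts to the identity $\alpha\circ(\beta\circ\epsilon)=(\delta\circ\eta)\circ\zeta$. First I would rebracket the left-hand side by associativity to isolate $\alpha\circ\beta$ and substitute $\alpha\circ\beta=\delta\circ\gamma$ from the left square, obtaining $(\delta\circ\gamma)\circ\epsilon$; then I would rebracket again to isolate $\gamma\circ\epsilon$ and substitute $\gamma\circ\epsilon=\eta\circ\zeta$ from the right square, obtaining $\delta\circ(\eta\circ\zeta)$; a final rebracketing gives $(\delta\circ\eta)\circ\zeta$, which is exactly the required equality. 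There is no genuine obstacle here, since the statement is a purely formal diagram chase; the only point demanding care is matching each unlabeled arrow of the pictures to its correct source and target, in particular recognising $\gamma\colon B\to D$ as the single edge common to both squares, along which the pasting takes place.
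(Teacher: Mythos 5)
Your proof is correct: the identification of the shared edge $\gamma\colon B\to D$, the reading of the pasted rectangle's horizontal arrows as the composites $\beta\circ\epsilon$ and $\delta\circ\eta$, and the three-step rebracketing chain $\alpha\circ(\beta\circ\epsilon)=(\delta\circ\gamma)\circ\epsilon=\delta\circ(\eta\circ\zeta)=(\delta\circ\eta)\circ\zeta$ are exactly what is needed. The paper states this as an Observation with no proof at all, so your argument is precisely the standard pasting-of-squares verification that the authors left implicit.
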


The following Theorem follows from \cite[Theorem 5.2]{kubis2014}. However, the proof there is very short and sketchy, with general model-theoretical assumptions. Anyone who wants to really understand this theory needs to complete the reasoning. Since we did not find detailed argument anywhere, we decided to present it here. This can be adapted to general situation.    

\begin{thm}\label{UniversalPosetMapping}
Let $H=\varprojlim\langle\{H_n\}_{n \in \N},\{h^n_k\}_{k<n}\rangle$ and $\mathbb{P}=\varprojlim\langle\{P_n\}_{n \in \N},\{p^n_k\}_{k<n}\rangle$, where $(P_n)$ is a \text{\Fraisse} sequence. Then there is a continuous quotient map $f:\mathbb{P} \rightarrow H$.
\end{thm}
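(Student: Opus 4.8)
The plan is to construct $f$ as the limit of a commuting ladder of quotient maps between the finite posets $P_{n_j}$ and $H_j$, built by induction using the three structural features already established: the \hyperlink{UProperty}{(U)} property to start, the amalgamation property (Theorem \ref{DirectednessANDAmProp}) to ``merge'' the two inverse systems one level at a time, and the \hyperlink{AProperty}{(A)} property to absorb the resulting amalgam back into the \text{\Fraisse} sequence $(P_n)$. Concretely, I aim to produce a strictly increasing sequence $(n_j)_{j\in\N}$ and quotient maps $f_j\colon P_{n_j}\to H_j$ such that the square
\begin{equation*}
\begin{tikzcd}
P_{n_j} \arrow[d,"f_j"'] & P_{n_{j+1}} \arrow[l,"p^{n_{j+1}}_{n_j}"'] \arrow[d,"f_{j+1}"] \\
H_j & H_{j+1} \arrow[l,"h^{j+1}_j"]
\end{tikzcd}
\end{equation*}
commutes for every $j$, i.e.\ $h^{j+1}_j\circ f_{j+1}=f_j\circ p^{n_{j+1}}_{n_j}$.

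For the base case I would apply the \hyperlink{UProperty}{(U)} property to the finite poset $H_1$, obtaining an index $n_1$ and a quotient map $f_1\colon P_{n_1}\to H_1$. For the inductive step, suppose $f_j\colon P_{n_j}\to H_j$ is given. Since $f_j$ and the bonding map $h^{j+1}_j\colon H_{j+1}\to H_j$ are both quotients onto $H_j$, the amalgamation property yields a finite poset $y$ together with quotient maps $q\colon y\to P_{n_j}$ and $r\colon y\to H_{j+1}$ satisfying $f_j\circ q=h^{j+1}_j\circ r$. Reading $q$ as an arrow in $\K(P_{n_j},y)$ and invoking the \hyperlink{AProperty}{(A)} property, I obtain $n_{j+1}>n_j$ and a quotient $g\colon P_{n_{j+1}}\to y$ with $p^{n_{j+1}}_{n_j}=q\circ g$. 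Setting $f_{j+1}:=r\circ g$ (a composition of quotients, hence itself a quotient), the square commutes, since $h^{j+1}_j\circ f_{j+1}=h^{j+1}_j\circ r\circ g=f_j\circ q\circ g=f_j\circ p^{n_{j+1}}_{n_j}$.

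Having the ladder, I would define $f\colon\mathbb{P}\to H$ coordinatewise by $f\big((x_n)_n\big):=\big(f_j(x_{n_j})\big)_j$; the commuting squares (composed as in Observation \ref{TwoCommutingDiagrams}) guarantee that the output is a genuine thread in $H$, and continuity is immediate because the $j$-th coordinate of $f$ factors through the projection $\mathbb{P}\to P_{n_j}$ followed by the map $f_j$ between finite discrete spaces. Surjectivity and the quotient property are where the real work lies, and both are handled by the same device. For surjectivity, given $(y_j)_j\in H$ the fibers $F_j:=f_j^{-1}(y_j)$ are nonempty and, by the commuting squares, satisfy $p^{n_{j+1}}_{n_j}[F_{j+1}]\subseteq F_j$; applying Lemma \ref{FiniteSequencesLemma} to this inverse system of nonempty finite sets produces a thread mapping to $(y_j)_j$, and since $(n_j)$ is cofinal in $\N$ it extends to a point of $\mathbb{P}$. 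For the quotient property, given comparable $p\le r$ in $H$ -- comparability being coordinatewise by Lemma \ref{OrderingsAreTheSameOnPandOnInverseLimit} and Theorem \ref{PIsTopIsoToInvFrLim} -- I would form the sets $S_j$ of pairs $(a,b)\in P_{n_j}\times P_{n_j}$ with $a\le_{n_j} b$, $f_j(a)=p_j$ and $f_j(b)=r_j$; these are nonempty because each $f_j$ is a quotient, and the projections carry $S_{j+1}$ into $S_j$, so Lemma \ref{FiniteSequencesLemma} again yields a compatible thread of pairs, i.e.\ points $x\le x'$ of $\mathbb{P}$ with $f(x)=p$ and $f(x')=r$.

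The main obstacle is the inductive step: one must combine amalgamation with the \hyperlink{AProperty}{(A)} property in exactly this order so that the new map $f_{j+1}$ both lands in $H_{j+1}$ and keeps the square commuting, and one must check that the index returned by \hyperlink{AProperty}{(A)} can always be taken strictly above $n_j$ (which it can, as (A) delivers $\ell>k$). The remaining verifications -- continuity, surjectivity, and the order-lifting condition -- are then routine, each being reduced to the nonemptiness of an inverse limit of nonempty finite sets via Lemma \ref{FiniteSequencesLemma}.
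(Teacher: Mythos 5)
Your proposal is correct and takes essentially the same approach as the paper: a ladder of commuting squares built inductively starting from property (U), the coordinatewise definition $f\big((x_n)_n\big)=\big(f_j(x_{n_j})\big)_j$, and both surjectivity and the order-lifting (quotient) condition reduced to Lemma \ref{FiniteSequencesLemma} applied to fibers and to sets of comparable pairs, respectively. The only difference is cosmetic: where the paper's inductive step invokes the extension property (E) (Lemma \ref{FraisseSequenceHasEProperty}) directly, you inline its standard derivation -- amalgamation followed by property (A) -- which is precisely how (E) is obtained for a \Fraisse{} sequence in a category with amalgamation.
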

\begin{proof}
We will define such quotient map using the fact that $\mathbb{P}$ is an inverse limit of \text{\Fraisse} sequence $\langle \{P_n\}_{n\in\N},\{p^n_k\}_{k < n}\rangle$. Because of \hyperlink{UProperty}{(U)} property we shall find quotient $f_1:P_{i_1} \rightarrow H_1$. Now using \hyperlink{ExtensionProperty}{(E)} property we will find such $P_{i_2},i_1<i_2$ and $f_2:P_{i_2} \to H_2$ that the diagram
\begin{equation}
\begin{tikzcd}
P_{i_1} \arrow[d, "f_1"'] & P_{i_2} \arrow[l, "p_{i_1} ^{i_2}"'] \arrow[d,"f_2"'] \\
H_1 					 & H_2 \arrow[l,"h_1 ^2"] \\
\end{tikzcd}
\end{equation}
commutes. Now we can apply \hyperlink{ExtensionProperty}{(E)} property to $f_2$ and $h_2 ^3$ to get next commuting diagram. Proceeding inductively we can apply this to $f_n$ and $h_n ^{n+1}$ to acquire respective commuting diagrams. Using Observation \ref{TwoCommutingDiagrams} we get that for every $n,k \in \N,k<n$ the diagram
\begin{equation} \label{CommutingDiagram}
\begin{tikzcd}
P_{i_k} \arrow[d, "f_k"'] & P_{i_n} \arrow[l, "p_{i_k} ^{i_n}"'] \arrow[d,"f_n"'] \\
H_k 					 & H_n \arrow[l,"h_k ^n"] \\
\end{tikzcd}
\end{equation}
commutes. Let $f:\mathbb{P} \to H$ be such that $f(x_1, x_2, \ldots)=(f_1(x_{i_1}), f_2(x_{i_2}), \ldots)$. We shall prove that it is a continuous quotient mapping onto $H$. It is easy to see that for every $n \in \N$ we have $h_n ^{n+1}(f_{n+1}(x_{i_{n+1}}))=f_n(p_{i_n} ^{i_{n+1}}(x_{i_{n+1}}))=f_n(x_{i_n})$. So $f(x_1, x_2, \ldots) \in H$ for every $(x_1, x_2, \ldots) \in \mathbb{P}$. We now show that $f$ is a homomorphism. Take $(x_1, \ldots) \leq_{\mathbb{P}} (y_1, \ldots)$. Then for every $n \in \N$ we have that $x_{i_n} \leq_{P_{i_n}} y_{i_n}$ so $f_n(x_{i_n}) \leq_{H_n} f_n(y_{i_n})$. Thus $f(x)=(f_1(x_{i_1}), \ldots) \leq_H  (f_1(y_{i_1}), \ldots)=f(y)$. 

Now we will show that $f$ is onto $H$. Take $(z_1, z_2, \ldots)$ in $H$. It is easy to see that $(f_n ^{-1} \left[ \lbrace z_n \rbrace \right])_{n \in \N}$ is a sequence of non-empty finite sets as $P_{i_n}$ are finite and $f_n$ are quotient maps. Let $M:= \prod_{n \in \N} f_n ^{-1} \left[\lbrace z_{n} \rbrace \right]$ and $$T_l:= \lbrace (x_{i_1}, x_{i_2}, \ldots, x_{i_l}) \in f_1 ^{-1} \left[\lbrace z_1 \rbrace \right] \times f_2 ^{-1} \left[\lbrace z_2 \rbrace \right] \times \ldots \times f_l ^{-1} \left[\lbrace z_l \rbrace \right]:p_{i_k} ^{i_l}(x_{i_l})=x_{i_k} \text{ for } k<l \rbrace.$$ We will show that $T_l$ is non-empty for every $l >1$. Take $x_{i_l} \in f_l ^{-1} \left[\lbrace z_l \rbrace \right]$. Then $z_{l-1}=h_{l-1} ^l(f_l(x_{i_l}))=f_{l-1}(p_{i_{l-1}} ^ {i_l}(x_{i_l}))$. Thus $x_{i_{l-1}}:=p_{i_{l-1}} ^ {i_l}(x_{i_l}) \in f_{l-1} ^{-1} \left[\lbrace z_{l-1} \rbrace \right]$. As diagram \eqref{CommutingDiagram} commutes we can continue this process to get a finite sequence in $T_l$ which makes it non-empty. This also shows that $p_{i_k} ^{i_n}$ maps $f_n ^{-1} \left[ \lbrace z_n \rbrace \right]$ to $f_k ^{-1} \left[ \lbrace z_k \rbrace \right]$ for $k<n$.
Furthermore $p_{i_k} ^{i_n} \circ p_{i_n} ^{i_l}=p_{i_k} ^{i_l}$ for $k<n<l$. Because of that we can use Lemma \ref{FiniteSequencesLemma} to get a sequence $(x_{i_1}, x_{i_2}, \ldots) \in M$ such that $p_{i_n} ^{i_{n+1}}(x_{i_{n+1}})=x_{i_n}$ for every $n \in \N$. Put $x'_n := p_n ^{i_n}(x_{i_n})$. Note that  $x'_{i_n}=x_{i_n}$. For that reason we will write $x_n$ instead of $x_n'$, which will not lead to any confusion. Then $f(x_1, x_2, \ldots)=(f_1(x_{i_1}), f_2(x_{i_2}), \ldots)=(z_1, z_2, \ldots)$ and $p_n ^{n+1}(x_{n+1})=p_n ^{n+1}(p_{n+1} ^{i_{n+1}}(x_{i_{n+1}}))=p_n ^{i_n} (p_{i_n} ^{i_{n+1}}(x_{i_{n+1}}))=p_n ^{i_n}(x_{i_n})=x_n$ so $(x_1, x_2, \ldots) \in \mathbb{P}$.

Now we show that $f$ is a strict homomorphism. Let $(u_1, u_2, \ldots),(v_1, v_2, \ldots) \in H$ such that $(u_1, u_2, \ldots) \leq_H (v_1, v_2, \ldots)$. Then $u_n \leq_{H_n} v_n$ for every $n \in \N$. Because $f_n$ is a quotient map the sets $A_n:= \lbrace (x,y) \in P_{i_n} \times P_{i_n}: x \leq_{P_{i_n}} y, f_n(x)=u_n, f_n(y)=v_n \rbrace$ are non-empty. Now for $n \in \N$, $(x_{i_n},y_{i_n}) \in A_n$ and $k < n$ we define $r_k ^n(x_{i_n}, y_{i_n}):=(p_{i_k} ^{i_n}(x_{i_n}), p_{i_k} ^{i_n}(y_{i_n}))$. See that $(A_i)_{i \in \N}$ is a sequence of non-empty, finite sets and $r_k ^n \circ r_n ^l=r_k ^l$ for $k<n<l$. We define $$E_n:= \lbrace ((x_{i_1}, y_{i_1}), (x_{i_2}, y_{i_2}), \ldots, ((x_{i_n}, y_{i_n})) \in A_1 \times A_2 \times \ldots \times A_n:r_k ^n(x_{i_n},y_{i_n})=(x_{i_k}, y_{i_k}) \rbrace.$$ We will show that $\rng \left(r_k ^n \right) \subset A_k$ for any $n \in \N$ and $k < n$. Take any $n \in \N, k < n$ and $(x_{i_n}, y_{i_n}) \in A_n$. Because the diagrams \eqref{CommutingDiagram} commute we get that $f_k(p_{i_k} ^{i_n}(x_{i_n}))=u_k, f_k(p_{i_k} ^{i_n}(y_{i_n}))=v_k$ and $p_{i_k} ^{i_n}(x_{i_n}) \leq_{P_{i_k}} p_{i_k} ^{i_n}(y_{i_n})$ so $r_k ^n(x_{i_n}, y_{i_n}) \in A_k$ for any $n \in \N$ and $k < n$, thus $r_k ^n$ maps $A_n$ to $A_k$ for $k<n, n \in \N$. Furthermore this shows that $E_n$ is non-empty for every $n \in \N$. 

Thus we can again apply Lemma \ref{FiniteSequencesLemma} to get a sequence $(x_{i_n}, y_{i_n})_{n\in\N} \in \prod_{n \in \N} A_n$ such that $r_n ^{n+1}(x_{i_{n+1}}, y_{i_{n+1}})=(x_{i_{n}}, y_{i_{n}})$. So we have got $(x_{i_n}), (y_{i_n})$ such that $$p_{i_n} ^{i_{n+1}}(x_{i_{n+1}})=x_{i_n}, p_{i_n} ^{i_{n+1}}(y_{i_{n+1}})=y_{i_n}, x_{i_n} \leq_{P_{i_n}} y_{i_n} \text{ and } f_n(x_{i_n})=u_n, f_n(y_{i_n})=v_n.$$ 

Now take $x'_n:=p_n ^{i_n}(x_{i_n}), y'_n:=p_n ^{i_n}(y_{i_n})$. Note that $x'_{i_n}=x_{i_n}$ and $y'_{i_n}=y_{i_n}$. For that reason we will write $x_n$ and $y_n$ instead of $x'_n$ and $y'_n$, which will not lead to any confusion. Then $$f(x_1, x_2, \ldots)=(f_1(x_{i_1}), f_2(x_{i_2}), \ldots)=(u_1, u_2, \ldots),$$ 
$$f(y_1, y_2, \ldots)=(f_1(y_{i_1}), f_2(y_{i_2}), \ldots)=(v_1, v_2, \ldots), (x_1 ,x_2 \ldots) \leq_\mathbb{P} (y_1, y_2, \ldots).$$ 

Furthermore for every $n \in \N$, $p_n ^{n+1}(x_{n+1})=p_n ^ {n+1}(p_{n+1} ^ {i_{n+1}}(x_{i_{n+1}})=p_n ^{i_n}(p_{i_n} ^{i_{n+1}}(x_{i_{n+1}})=p_n ^{i_n}(x_{i_n})=x_n$. The same applies to $y_n$. Thus $f$ is a strict homomorphism.

Now we will show that $f$ is continuous. Take any basic set $U_{(z_1, z_2, \ldots, z_n)}=(\{z_1\} \times \{z_2\} \times \ldots \times \{z_n\} \times H_{n+1} \times \ldots) \cap H$. Let $(x_1, x_2, \dots) \in f^{-1}\left[U_{(z_1, z_2, \ldots, z_n)}\right]$. Then $f_1(x_{i_1})=z_1, \dots, f_n(x_{i_n})=z_n$. See that $(x_1, x_2, \dots) \in (\{x_1\} \times \{x_2\} \times \dots \times \{x_{i_1}\} \times \dots \times \{x_{i_n}\} \times P_{i_n +1} \times \dots) \cap \mathbb{P} \subset f^{-1}\left[U_{(z_1, z_2, \ldots, z_n)}\right]$ which means $f^{-1}\left[U_{(z_1, z_2, \ldots, z_n)}\right]$ is open, thus $f$ is continuous.
\end{proof}
\begin{rem}
The representation of a function $f$ in Theorem \ref{UniversalPosetMapping} is worth remembering, so we remind it for reader's convenience. 
Let $H$ a profinite poset generated by a sequence $H_n$ and quotient mappings $h_k ^n:H_n \to H_k$. 
There is a sequence $i_1<i_2<\dots$ and quotient mappings $f_n:P_{i_n} \to H_n$ such that the diagram
\begin{equation*}
\begin{tikzcd}
P_{i_n} \arrow[d, "f_n"'] & P_{i_k} \arrow[l, "p_{i_n} ^{i_k}"'] \arrow[d,"f_k"'] \\
H_n 					 & H_k \arrow[l,"h_n ^k"] \\
\end{tikzcd}
\end{equation*}
commutes for every $n,k \in \N,n<k$. Then we define a continuous quotient mapping $f:\mathbb{P} \to H$ as follows $f(x_1, x_2, \dots)=(f_1(x_{i_1}), f_2(x_{i_2}), \dots)$.
\end{rem}
Now we will show that similarly all continuous quotient maps $f:\mathbb{P} \to H$ where $H$ is a profinite poset are of this form.
\begin{lem}\label{Lemma_About_Representation_Of_Quotient_Mappings_From_P}
Let $f:P \to L$ be a continuous quotient mapping where $L$ is a finite poset and $P=\varprojlim\langle \{P_n\}_{n \in \N},\{p^n_k\}_{k<n}\rangle$ is a profinite poset. 
Let $i_0\in\N$. Then we can find $i>i_0$ and quotient mapping $h:P_i \to L$ such that $f=h \circ p_i$ where $p_i:P \to P_i$ is defined as follows: $p_i(x_1, x_2, \dots)=x_i$.
\end{lem}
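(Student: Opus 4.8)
The plan is to exploit the compactness of $P$ together with the finiteness (hence discreteness) of $L$ to show that $f$ depends only on finitely many coordinates, and then to read $h$ off from that finite level. First I would record two facts about the basic clopen sets of $P$. Because any $x=(x_1,x_2,\dots)\in P$ satisfies $x_k=p_k^n(x_n)$ for $k<n$, the single coordinate $x_n$ already determines $x_1,\dots,x_{n-1}$; consequently the basic clopen set $(\{x_1\}\times\dots\times\{x_n\}\times P_{n+1}\times\dots)\cap P$ coincides with $p_n^{-1}[\{x_n\}]$, and these sets (as $n$ ranges over $\N$ and $a$ over $P_n$) form a clopen basis for $P$. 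I would also note that each projection $p_n$ is onto: given $a\in P_n$, set $x_k=p_k^n(a)$ for $k\le n$ and lift $a$ successively through the surjective bonding maps $p_m^{m+1}$ for $m\ge n$ to obtain a compatible sequence in $P$ projecting to $a$.

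Next I would factor $f$ through a finite level. Since $L$ is finite and discrete and $f$ is continuous, every fibre $f^{-1}[\{\ell\}]$ is clopen, so for each $x\in P$ there is a basic clopen neighbourhood $B_x:=p_{n(x)}^{-1}[\{x_{n(x)}\}]$ contained in $f^{-1}[\{f(x)\}]$; that is, $f$ is constant on $B_x$. These neighbourhoods cover the compact space $P$, so finitely many of them, say $B_1=p_{n_1}^{-1}[\{c_1\}],\dots,B_m=p_{n_m}^{-1}[\{c_m\}]$, already cover $P$, with $f$ constant on each $B_j$. I then set $i:=\max\{i_0+1,n_1,\dots,n_m\}$.

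The heart of the argument is that $f$ is constant on each fibre of $p_i$. Suppose $p_i(x)=p_i(y)$, i.e. $x_i=y_i$; then $x_k=p_k^i(x_i)=p_k^i(y_i)=y_k$ for all $k\le i$, so $x$ and $y$ agree on every coordinate $\le i$. Now $x$ lies in some $B_j=p_{n_j}^{-1}[\{c_j\}]$, which just means $x_{n_j}=c_j$; since $n_j\le i$ and $x,y$ agree up to level $i$, also $y_{n_j}=c_j$, so $y\in B_j$ and hence $f(x)=f(y)$. This lets me define $h:P_i\to L$ by $h(a):=f(x)$ for any $x\in P$ with $x_i=a$; it is well defined because $p_i$ is onto and $f$ is constant on $p_i$-fibres, and by construction $f=h\circ p_i$.

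Finally I would verify that $h$ is a quotient map. It is onto because $f=h\circ p_i$ is onto. For the order-lifting property, given $u\le_L v$ I use that $f$ is a quotient map to obtain $x\le_P y$ with $f(x)=u$, $f(y)=v$; then $a:=x_i$ and $b:=y_i$ satisfy $a\le_i b$ (as $p_i$ is order-preserving) and $h(a)=u$, $h(b)=v$. The one place where I expect real work is showing $h$ is order-preserving, i.e. $a\le_i b\Rightarrow h(a)\le_L h(b)$: this forces me to lift the relation $a\le_i b$ to a comparable pair in $P$, which is exactly the assertion that the projection $p_i$ is itself a quotient map. I would prove this separately, using that each bonding map $p_n^{n+1}$ is a quotient map: from $a\le_i b$ I successively lift through the quotient maps $p_n^{n+1}$ ($n\ge i$) to build compatible chains $x_i\le_i x_{i+1}\le\dots$ and $y_i\le y_{i+1}\le\dots$ above level $i$, put $x_k:=p_k^i(a)$, $y_k:=p_k^i(b)$ for $k<i$, and observe that the resulting $x\le_P y$ satisfy $x_i=a$, $y_i=b$, whence $h(a)=f(x)\le_L f(y)=h(b)$. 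This surjective-lifting step, which is where the hypothesis that the bonding maps $p^n_k$ are quotient maps genuinely enters, is the main obstacle; everything else is routine compactness bookkeeping.
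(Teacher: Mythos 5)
Your proof is correct and follows essentially the same route as the paper: use compactness and the discreteness of $L$ to factor $f$ through a finite level $P_i$, then lift order relations through the quotient bonding maps to show $h$ is order-preserving, and use the quotient property of $f$ itself for the order-lifting (strictness) of $h$. The only cosmetic difference is bookkeeping: the paper first uniformizes the ranks of the covering basic sets and defines $h$ by which fiber each rank-$i$ basic set sits in, whereas you take the maximum of the ranks and verify directly that $f$ is constant on $p_i$-fibres before reading off $h$.
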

\begin{proof}
We remind that the non-empty basic sets in $P$ have the following representation:
$$(\{x_1\} \times \{x_2\} \times \dots \times \{x_k\} \times P_{k+1} \times P_{k+2} \dots) \cap P $$
where $p_j ^n(x_n)=x_j$ for $j<n\leq k$.

We will say that this basic set has a rank $k$. As $P_k$ are all finite then every basic set of rank $k$ is a finite disjoint union of some basic sets of rank $k+1$.	

Let $L=\{l_1, l_2, \dots, l_n \}$. As $L$ has a discrete topology, we get that $f^{-1}[\{l_j\}]$ is a compact set as it is a closed subset of a compact space. Take any open cover of $f^{-1}[\{l_j\}]$ consisting of basic sets and take its finite sub-cover $\mathcal{V}_j$. Since $f^{-1}[\{l_j\}]$ is open, we may assume that $\bigcup \mathcal{V}_j= f^{-1}[\{l_j\}]$. Without the loss of generality we can assume that all such sub-covers consist of sets with the same rank greater than $i_0$, say $i$. From now on the rank $i$ is fixed.

Let 
\begin{equation}\label{EQ1}
(\{x_1\} \times \{x_2\} \times \dots \times \{x_i\} \times P_{i+1} \times P_{i+2} \dots) \cap P
\end{equation}
be a basic set of rank $i$. Clearly for all non-empty sets $(\{x'_1 \} \times \{x'_2 \} \times \dots \times \{x'_{i-1} \} \times \{x_i \} \times P_{i+1} \times P_{i+2} \dots) \cap P $ we have that $x'_j =x_j$ for $j < i$ as $p_j ^i(x_i)=x_j$. So each set of the form \eqref{EQ1} is fully described by $x_i$.

We define $h:P_i \to L$ as follows: $h(x_i)=l$ if and only if $(\{p_1 ^i(x_i) \} \times \{p_2 ^i(x_i) \} \times \dots \times \{x_i \} \times P_{i+1} \times P_{i+2} \times \dots) \cap P \subset f^{-1}[\{l\}]$. By the definition of $i$, each non-empty basic set of rank $i$ is contained in exactly one fiber $f^{-1}[\{l\}]$, which shows that $h$ is well-defined. 

Now we prove that $f=h \circ p_i$. Take any $(x_1, x_2, x_3, \dots) \in P$. Then $f(x_1, \dots, x_i, x_{i+1}, \dots) =l_j$ for some $l_j \in L$. 
Clearly $(\{x_1\} \times \{x_2\} \times \dots \times \{x_i\} \times P_{i+1} \times \dots) \cap P \subset f^{-1}[\{l_j\}]$ then and we have that $h(x_i)=l_j$. So we have that $h(p_i(x_1,\dots, x_i, x_{i+1}, \dots))=f(x_1,\dots, x_i, x_{i+1}, \dots)$

Now we prove that $h$ is onto $L$. Take any $l_j \in L$. We can find a basic set $W_{x_i}=(\{x_1\} \times \{x_2\} \times \{x_3\} \times \dots \times \{x_i\} \times P_{i+1} \times \dots) \cap P$ in $\mathcal{V}_j$. Then $h(x_i)=l_j$.

Now we prove that $h$ is a homomorphism.
Take any $x_i, y_i \in P_i$ such that $x_i \leq_{P_i} y_i$. Define $x_j:=p_j ^i(x_i), y_j:=p_j ^i(y_i)$ for $j < i$. As $p_j ^i$ are homomorphisms we get that $x_j \leq_{P_j} y_j$. As $p_i ^{i+1}$ is a quotient map we can find $x_{i+1}, y_{i+1} \in P_{i+1}$ such that $p_i ^{i+1}(x_{i+1})=x_i, p_i ^{i+1}(y_{i+1})=y_i$ and $x_{i+1} \leq_{P_{i+1}} y_{i+1}$. Proceeding inductively we get $$(x_1, x_2, \dots, x_i, x_{i+1}, \dots), (y_1, y_2, \dots, y_i, y_{i+1}, \dots) \in P$$ such that $$(x_1, x_2, \dots, x_i, x_{i+1}, \dots) \leq_{P} (y_1, y_2, \dots, y_i, y_{i+1}, \dots).$$ So $h(x_i)=f(x_1, x_2, \dots, x_i, x_{i+1}, \dots) \leq_{L} f(y_1, y_2, \dots, y_i, y_{i+1}, \dots)=h(y_i)$ thus $h$ is a homomorphism.

Now we show that $h$ is a strict homomorphism. Let $l,z \in L$ be such that $l \leq_L z$. As $f$ is a quotient map we get that for some $(x_1, x_2, \dots, x_i, \dots), (y_1, y_2, \dots, y_i, \dots) \in P$ such that $$(x_1, x_2, \dots, x_i, \dots) \leq_P (y_1, y_2, \dots, y_i, \dots)$$ and $$f(x_1, x_2, \dots, x_i, \dots)=l,f(y_1, y_2, \dots, y_i, \dots)=z.$$ Then $x_i \leq_{P_i} y_i$ and $h(x_i)=l, h(y_i)=z$.

\end{proof}

Now we may prove the following:
\begin{thm}\label{RepresentationOfContinuousQuotientMapping}
Let $P=\varprojlim\langle \{P_n\}_{n \in \N},\{p^n_k\}_{k<n}\rangle$,  $H=\varprojlim\langle\{H_n\}_{n \in \N},\{h^n_k\}_{k<n}\rangle$ be profinite posets and let $f:P \to H$ be a continuous quotient mapping. Then there is a strictly increasing sequence $(i_n)_{n \in \N}$ and quotient maps $g_j:P_{i_j} \to H_j$ such that $f(x_1, x_2, \dots)=(g_1(x_{i_1}), g_2(x_{i_2}), \dots)$. Furthermore the diagram
\begin{equation}\label{RepresentationDiagram}
    \begin{tikzcd}
    P_{i_k} \arrow[d, "g_k"'] & P_{i_n} \arrow[l, "p_{i_k} ^{i_n}"'] \arrow[d,"g_n"'] \\
    H_k 					 & H_n \arrow[l,"h_k ^n"] \\
    \end{tikzcd}
\end{equation}
commutes for every $n,k \in \N$ with $k<n$.
\end{thm}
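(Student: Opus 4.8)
The plan is to reduce the theorem to repeated applications of Lemma \ref{Lemma_About_Representation_Of_Quotient_Mappings_From_P}, which already represents any continuous quotient map from $P$ onto a \emph{finite} poset. The bridge is the family of coordinate projections $q_n:H\to H_n$, $q_n(z_1,z_2,\dots)=z_n$. The point is that if each $q_n$ is a continuous quotient map, then each composite $q_n\circ f:P\to H_n$ is a continuous quotient map onto a finite poset (composition of continuous quotient maps is again such), so Lemma \ref{Lemma_About_Representation_Of_Quotient_Mappings_From_P} applies to it and produces the maps $g_n$ together with the factorization $q_n\circ f=g_n\circ p_{i_n}$.

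First I would verify that each projection $q_n$ is a continuous quotient map. Continuity and the homomorphism property are immediate from the definition of the product topology and of the order on an inverse limit. Surjectivity and the quotient (lifting) property are exactly of the kind established inside the proof of Theorem \ref{UniversalPosetMapping}: since the bonding maps $h^m_k$ are quotient maps between finite posets, a comparable pair $p\leq_{H_n}r$ in $H_n$ can be lifted to a comparable pair in each $H_m$ with $m\geq n$ in a compatible way, and Lemma \ref{FiniteSequencesLemma}, applied to the finite sets of such compatible comparable pairs, assembles these lifts into a single comparable pair $u\leq_H v$ in $H$ with $q_n(u)=p$ and $q_n(v)=r$. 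This is the step I expect to be the main obstacle, since it is where compactness of the inverse limit (through Lemma \ref{FiniteSequencesLemma}) genuinely enters; everything after it is bookkeeping.

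Next, knowing that $q_n\circ f$ is a continuous quotient map onto $H_n$ for every $n$, I would apply Lemma \ref{Lemma_About_Representation_Of_Quotient_Mappings_From_P} inductively. Applying it to $q_1\circ f$ yields an index $i_1$ and a quotient map $g_1:P_{i_1}\to H_1$ with $q_1\circ f=g_1\circ p_{i_1}$; given $i_{n-1}$, applying it to $q_n\circ f$ with lower bound $i_{n-1}$ yields $i_n>i_{n-1}$ and a quotient map $g_n:P_{i_n}\to H_n$ with $q_n\circ f=g_n\circ p_{i_n}$. This produces a strictly increasing sequence $(i_n)$ and quotient maps $g_n$. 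Reading off the $n$-th coordinate of $f(x)$ gives $\bigl(f(x)\bigr)_n=q_n(f(x))=g_n(p_{i_n}(x))=g_n(x_{i_n})$, hence $f(x_1,x_2,\dots)=(g_1(x_{i_1}),g_2(x_{i_2}),\dots)$, as claimed.

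Finally I would check that diagram \eqref{RepresentationDiagram} commutes, i.e. that $h^n_k\circ g_n=g_k\circ p^{i_n}_{i_k}$ for $k<n$. Since $f(x)\in H$ for every $x\in P$, the defining coherence of the inverse limit $H$ gives $h^n_k(g_n(x_{i_n}))=g_k(x_{i_k})$ for all $x\in P$. Now fix $a\in P_{i_n}$; as the projection $p_{i_n}:P\to P_{i_n}$ is onto, choose $x\in P$ with $x_{i_n}=a$, and observe that $x_{i_k}=p_{i_k}(x)=p^{i_n}_{i_k}(p_{i_n}(x))=p^{i_n}_{i_k}(a)$. Substituting into the previous identity yields $h^n_k(g_n(a))=g_k(p^{i_n}_{i_k}(a))$, which is precisely the required commutativity and completes the argument.
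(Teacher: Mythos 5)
Your proposal is correct and follows essentially the same route as the paper's proof: compose $f$ with the coordinate projections $H\to H_n$, apply Lemma \ref{Lemma_About_Representation_Of_Quotient_Mappings_From_P} inductively to obtain the strictly increasing indices $i_n$ and the quotient maps $g_n$, read off the coordinates of $f(x)$, and verify diagram \eqref{RepresentationDiagram} by lifting an arbitrary element of $P_{i_n}$ to a point of $P$ and invoking the coherence condition defining $H$. The only difference is that you explicitly check (via compatible lifts and Lemma \ref{FiniteSequencesLemma}) that each projection $H\to H_n$ is itself a quotient map, whereas the paper passes over this point silently when it asserts that $h_k\circ f$ is a continuous quotient map after noting only surjectivity; your version is, if anything, the more complete one.
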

\begin{proof}
Take any $(x_1, x_2, \dots) \in P$. Let $h_k:H \to H_k$ be the projection on $k$-th coordinate, i.e. $h_k(y_1, y_2, \dots)=y_k$. Since $f$ and $h_k$ are onto, then $\rng(h_k \circ f)=H_k$. So we can apply Lemma \ref{Lemma_About_Representation_Of_Quotient_Mappings_From_P} to continuous quotient map $h_k \circ f$. Using Lemma \ref{Lemma_About_Representation_Of_Quotient_Mappings_From_P} inductively we find a strictly increasing sequence $(i_k)$ of indices and  quotient mappings $g_k:P_{i_k} \to H_k$ such that $g_k \circ p_{i_k}=h_k \circ f$ where $p_{i_k}(x_1, x_2, \dots)=x_{i_k}$. So we get that $h_k(f(x_1, x_2, \dots))=g_k(x_{i_k})$. Thus the $k$-th coordinate of $f(x_1, x_2, \dots)$ is $g_k(x_{i_k})$. 

Now we are going to show that diagram \eqref{RepresentationDiagram} commutes. Take any $n,k \in \N$ with $k<n$ and let $x_{i_n} \in P_{i_n}$. Let $x_s:=p_s ^{i_n}(x_{i_n})$ for $s<i_n$. Then $(x_1, x_2, \dots, x_{i_n -1}, x_{i_n}, z_{i_n +1}, \dots) \in P$ for some $z_{i_n + \ell} \in P_{i_n + \ell}$ with $\ell \in \N$.
Then $$f(x_1, x_2, \dots, x_{i_n -1}, x_{i_n}, z_{i_n +1}, \dots)=(g_1(x_{i_1}), g_2(x_{i_2}), \dots, g_k(x_{i_k}), \dots, g_{n}(x_{i_n}), \dots) \in H,$$ so $h_k ^n(g_n(x_{i_n}))=g_k(x_{i_k})=g_k(p_{i_k} ^{i_n}(x_{i_n})$, which means $g_k \circ p_{i_k} ^{i_n}=h_k ^n \circ g_n$, thus diagram \eqref{RepresentationDiagram} commutes.
\end{proof}

Now we are going to prove that if we have a continuous quotient map $f$ from $\mathbb{P}$ onto a finite poset $A$ and a quotient mapping $g$ from finite poset $B$ onto $A$, then we can find a continuous quotient map $h$ from $\mathbb{P}$ onto $B$, such that $f=g \circ h$.

\begin{thm}\label{UniversalProperty}
Let $f:\mathbb{P} \to A, g:B \to A$ be continuous quotient mappings, where $A, B$ are finite posets and $\mathbb{P}=\varprojlim\langle\{P_n\}_{n \in \N},\{p^n_k\}_{k<n}\rangle$ where $(P_n)$ is a \text{\Fraisse} sequence. Then there exists a continuous quotient mapping $h:\mathbb{P} \to B$, such that diagram
\begin{equation*}
		\begin{tikzcd}
		A && \mathbb{P} {\arrow[ld, "h"]} \arrow[ll, "f"'] \\
		& B \arrow[lu, "g"] & \\
		\end{tikzcd}
\end{equation*}
commutes.
\end{thm}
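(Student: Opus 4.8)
The plan is to push $f$ down to a single finite stage of the inverse system, amalgamate it there with $g$, and then lift the amalgamation back up to $\mathbb{P}$ using the absorption property of the \Fraisse{} sequence. First I would apply Lemma \ref{Lemma_About_Representation_Of_Quotient_Mappings_From_P} to the continuous quotient map $f:\mathbb{P}\to A$ (here $A$ is finite): this produces an index $i\in\N$ and a quotient map $f':P_i\to A$ with $f=f'\circ p_i$, where $p_i:\mathbb{P}\to P_i$ is the projection $p_i(x_1,x_2,\dots)=x_i$. This trades the ``infinite'' datum $f$ for the finite quotient $f':P_i\to A$.

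Next, observe that $f':P_i\to A$ and $g:B\to A$ are two quotient maps with the same codomain $A$, that is, two arrows issuing from $A$ in $\K$. By the amalgamation property (Theorem \ref{DirectednessANDAmProp}) there are a finite poset $D$ and quotient maps $q:D\to P_i$ and $r:D\to B$ such that $f'\circ q=g\circ r$. The object of interest is now the quotient $q:D\to P_i$, viewed as an arrow $q\in\K(P_i,D)$. Applying the \hyperlink{AProperty}{(A)} property of the \Fraisse{} sequence to $q$ yields an index $\ell>i$ and a quotient map $s:P_\ell\to D$ such that, read as functions, $p^\ell_i=q\circ s$ (in categorical notation this is the identity $u^\ell_i=s\bullet q$).

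I would then set $h:=r\circ s\circ p_\ell:\mathbb{P}\to B$, where $p_\ell:\mathbb{P}\to P_\ell$ is the projection. It is continuous because each factor is, and the triangle commutes by the chain $g\circ h=g\circ r\circ s\circ p_\ell=f'\circ q\circ s\circ p_\ell=f'\circ p^\ell_i\circ p_\ell=f'\circ p_i=f$, using in turn $f'\circ q=g\circ r$, then $p^\ell_i=q\circ s$, and finally the inverse-limit identity $p^\ell_i\circ p_\ell=p_i$. It remains to see that $h$ is a genuine quotient map. The finite factors $r$ and $s$ are quotients by construction, and the projection $p_\ell$ is a continuous quotient map: it is onto because the bonding maps $p^{n+1}_n$ are onto, and it lifts comparabilities by the coordinatewise argument already used inside the proof of Lemma \ref{Lemma_About_Representation_Of_Quotient_Mappings_From_P} (push a relation $a\leq_{P_\ell}b$ down via the maps $p^\ell_n$ for $n<\ell$, and lift it upward one coordinate at a time through the quotients $p^{n+1}_n$ for $n\geq\ell$, which by Lemma \ref{OrderingsAreTheSameOnPandOnInverseLimit} assembles into a relation in $\mathbb{P}$). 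Since a composition of quotient maps is again a quotient map, $h$ is the desired continuous quotient.

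I expect the only real subtlety to be confirming that $p_\ell$, and hence the composite $h$, is a \emph{strict} (order-lifting) quotient rather than merely an order homomorphism; surjectivity and the diagram chase are routine, while the order-lifting step genuinely uses the surjectivity of the bonding maps together with the coordinatewise lifting described above. Everything else is bookkeeping once the reduction (Lemma \ref{Lemma_About_Representation_Of_Quotient_Mappings_From_P}), the amalgamation (Theorem \ref{DirectednessANDAmProp}), and the \hyperlink{AProperty}{(A)} property are lined up in this order.
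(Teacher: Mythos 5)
Your proposal is correct and follows the same overall strategy as the paper's proof: first reduce $f$ to a finite stage via Lemma \ref{Lemma_About_Representation_Of_Quotient_Mappings_From_P}, then use the \Fraisse{} machinery to extend over $g$, and finally compose with a projection. The single point of divergence is the middle step. The paper applies the extension property \hyperlink{ExtensionProperty}{(E)} (available by Lemma \ref{FraisseSequenceHasEProperty}, since the category has amalgamation) directly to $t\colon P_{i_1}\to A$ and $g\colon B\to A$, obtaining in one stroke an index $i_2>i_1$ and a quotient $l\colon P_{i_2}\to B$ with $t\circ p_{i_1}^{i_2}=g\circ l$, and sets $h=l\circ p_{i_2}$. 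You instead re-derive (E) inline: you amalgamate $f'$ and $g$ over $A$ (Theorem \ref{DirectednessANDAmProp}) to get the auxiliary poset $D$ with quotients $q,r$, and then absorb $q$ into the sequence via \hyperlink{AProperty}{(A)}, ending with the longer composite $h=r\circ s\circ p_\ell$. The two routes are logically the same --- your $r\circ s$ plays exactly the role of the paper's $l$, and indeed your two steps are precisely the proof of Lemma \ref{FraisseSequenceHasEProperty} unpacked --- so nothing is gained or lost in generality; your version is more self-contained (it does not use (E) as a black box) at the cost of an extra intermediate object. One small point in your favor: you explicitly verify that the projection $p_\ell\colon\mathbb{P}\to P_\ell$ is a strict (order-lifting) quotient via the coordinatewise lifting argument, a fact the paper's proof uses but leaves implicit in the phrase ``clearly $h$ is continuous''.
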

\begin{proof}
From Lemma \ref{Lemma_About_Representation_Of_Quotient_Mappings_From_P} we get that there is there is a continuous quotient map $t: P_{i_1} \to A$ such that diagram 
\begin{equation}    
     \begin{tikzcd}
		A && \mathbb{P} \arrow[ld, "p_{i_1}"] \arrow[ll, "f"'] \\
		& P_{i_1} \arrow[lu, "t"] & \\
		\end{tikzcd}
\end{equation}
commutes, where $p_{i_1}(x_1, x_2, \dots)=x_{i_1}$.

Using \hyperlink{ExtensionProperty}{(E)} property for $A,B,P_{i_1}$, we shall find $i_2 > i_1$ and quotient map $l:P_{i_2} \to P_{i_1}$, such that diagram
\begin{equation}
    \begin{tikzcd}
    P_{i_1} \arrow[d, "t"'] &   \arrow[l, "p_{i_1} ^{i_2}"']  \arrow[d, "l"] P_{i_2} \\
    A & B \arrow[l, "g"]  \\
    \end{tikzcd}
\end{equation}
commutes. Note that $l$ is continuous. Let $p_{i_2}(x_1, x_2, \dots)=x_{i_2}$
Define $h:=l \circ p_{i_2}$. Clearly $h$ is continuous and $g \circ h= g \circ l \circ p_{i_2}=t \circ p_{i_1} ^{i_2} \circ p_{i_2}=t \circ p_{i_1}=f$, thus we get the assertion.
\end{proof}

Now we are going to show that $\mathbb{P}=\{0, 1, 2, 3\}^\w$ has a dense $G_{\delta}$ set of isolated points with respect to its ordering.

\begin{thm}\label{IsolatedPointsAreG-Delta}
Let $(X,d)$ be a compact metric space with closed relation $E$. Then the set of $E$-isolated points is $G_{\delta}$ subset of $X$. 
\end{thm}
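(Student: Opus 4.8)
The plan is to show that the complement of the set of $E$-isolated points is $F_\sigma$, which is equivalent to the set of isolated points being $G_\delta$. First I would unwind the definition: a point $x$ fails to be $E$-isolated precisely when there is some $y \neq x$ with $xEy$ or $yEx$. Writing $\Delta := \{(x,x) : x \in X\}$ for the diagonal and viewing $E$ as a subset of $X \times X$, the relevant set of witnessing pairs is $E \setminus \Delta$. If $\pi_1, \pi_2 : X \times X \to X$ denote the two coordinate projections, then the set $N$ of non-$E$-isolated points is exactly $\pi_1(E \setminus \Delta) \cup \pi_2(E \setminus \Delta)$.

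Next I would use the metric to split the off-diagonal part into compact pieces. Since $(X \times X) \setminus \Delta = \bigcup_{n \geq 1} C_n$, where $C_n := \{(x,y) \in X \times X : d(x,y) \geq 1/n\}$ is closed, and $E$ is closed by hypothesis, each $E \cap C_n$ is a closed subset of the compact space $X \times X$, hence compact. Thus $E \setminus \Delta = \bigcup_{n \geq 1}(E \cap C_n)$ is a countable union of compact sets.

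The decisive step is to project piece by piece rather than all at once. Because $X$ is Hausdorff, the continuous images $\pi_1(E \cap C_n)$ and $\pi_2(E \cap C_n)$ are compact, hence closed, subsets of $X$. Therefore
$$N = \bigcup_{n \geq 1} \pi_1(E \cap C_n) \cup \bigcup_{n \geq 1} \pi_2(E \cap C_n)$$
is a countable union of closed sets, so $N$ is $F_\sigma$ and $X \setminus N$, the set of $E$-isolated points, is $G_\delta$.

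The step I expect to be the main obstacle is precisely this projection argument: one cannot simply project the $F_\sigma$ set $E \setminus \Delta$, since coordinate projections need not carry $F_\sigma$ sets to $F_\sigma$ sets. Compactness of $X$ is what rescues the argument, by ensuring that the projection of each closed (equivalently compact) piece $E \cap C_n$ is again closed; this is essentially the only place where the compactness hypothesis is genuinely used.
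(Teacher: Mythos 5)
Your proof is correct, but it takes a genuinely different route from the paper's. You pass to the complement: the set $N$ of non-$E$-isolated points is $\pi_1(E\setminus\Delta)\cup\pi_2(E\setminus\Delta)$, and you slice the off-diagonal part as $E\setminus\Delta=\bigcup_{n\geq 1}(E\cap C_n)$ with $C_n=\{(x,y):d(x,y)\geq 1/n\}$; each slice is a closed subset of the compact space $X\times X$, hence compact, so its projections are compact and therefore closed, making $N$ an $F_\sigma$ set and $I=X\setminus N$ a $G_\delta$ set. The paper instead argues directly on the set $I$ of isolated points: for each $x\in I$ and $\epsilon>0$ it produces, via the closedness of $E$ (through a sequential argument) and a finite subcover of the compact set $Y_{x,\epsilon}=\{y:d(x,y)\geq\epsilon\}$, a radius $\delta(x,\epsilon)>0$ such that no point within $\delta(x,\epsilon)$ of $x$ is $E$-related to any point at distance at least $\epsilon$ from $x$; it then sets $V_\epsilon=\{x':d(x,x')<\delta(x,\epsilon)\text{ for some }x\in I\}$ and verifies $I=\bigcap_{n\geq 1}V_{1/n}$, the nontrivial inclusion resting on the triangle inequality. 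Both arguments spend the same two hypotheses, but yours packages them into the single standard fact that continuous images of compact sets are compact (hence closed in a Hausdorff space), which yields a shorter and more transparent proof; the paper's version is more hands-on, constructing the witnessing open sets explicitly from the metric at the cost of a longer and more delicate argument. Your closing remark is also on point: without compactness the projection of the closed set $E\setminus\Delta$ need only be analytic, so the piecewise projection of compact slices is precisely where the compactness hypothesis is consumed.
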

\begin{proof}
Let $I$ be the set of all $E$-isolated points in $X$. Take any $x \in I$ and $\epsilon >0$. Then the set $Y_{x,\epsilon}:=\{y \in X:d(x,y) \geq \epsilon \}$ is closed. Now, we will prove that for each $y \in Y_{x,\epsilon}$, there are its open neighborhood $U_y$ and $\delta_y>0$ such that
\begin{equation}\label{NotBeingInRelation}
\text{ for every } x', y' \in X \text{ if } d(x, x')<\delta_y \text{ and } y' \in U_y \text{, then } (x', y') \notin E \text{ and } (y', x') \notin E \tag{$\spadesuit$} 
\end{equation}
Suppose to the contrary that there is $y \in Y_{x,\epsilon}$, such that for every open neighborhood $U_y$ and $\delta_y>0$ there are $x',y' \in X$, such that $d(x, x')< \delta_y$ and $y' \in U_y$ and $(x', y') \in E$ or $(y', x') \in E$. We can find sequences $x_n', y_n'$, such that $d(x, x_n')<\frac{1}{n+1}$ and $y_n' \in B(y, \frac{1}{n+1})$. Then $x_n'  \to x, y_n' \to y$ and either $(x_n', y_n' ) \in E$ or $(y_n ', x_n') \in E$ for each $n \in \w$. One of these conditions, either $(x_n', y_n' ) \in E$ or $(y_n ', x_n') \in E$, is satisfied for infinitely many $n$'s, so we can choose sub-sequences $(x' _{k_n}), (y' _{k_n})$, which satisfy one of them for every $n$. Assume that $ (x' _{k_n}, y' _{k_n}) \in E$ for each $n \in \w$. Then as $E$ is closed we get that $(x,y) \in E$, which is a contradiction. If $(y' _{k_n}, x' _{k_n}) \in E$ for each $n$ we reach a contradiction as well.

Since $Y_{x,\epsilon}$ is compact, there are open neighborhoods $U_{y_1}, \dots, U_{y_n}$ of $y_1, \dots, y_n$, respectively, such that $Y_{x,\epsilon} \subset \bigcup_{i=1} ^n U_{y_i}$. Let $\delta(x, \epsilon):= \min\{\epsilon, \delta_{y_1}, \delta_{y_2}, \dots, \delta_{y_n} \}$. See that, if $x'$ has a distance less that $\delta(x, \epsilon)$ from $x$ and $y \in Y_{x, \epsilon}$, then $(x', y) \notin E$ and $(y,x') \notin E$. For each $\epsilon>0$ define
$$V_{\epsilon}:=\{x' \in X: d(x, x') <\delta(x, \epsilon)\text{ for some }x\in I \}.$$

Then $V_\epsilon$ is open. We will show that $$I= \bigcap_{n \geq 1} V_{\frac{1}{n}}.$$ It is clear that $I \subset \bigcap_{n \geq 1} V_{\frac{1}{n}}$. On the other hand, if $y \in X \setminus I$, then $(y,z) \in E$ or $(z,y) \in E$ for some $z \in X$. Let $n \in \N$ be such that $\frac{2}{n}<d(z,y)$. We will show that $y \notin V_{\frac{1}{n}}$. Suppose to the contrary that $y \in V_{\frac{1}{n}}$. Let $x \in I$ be such that $d(x,y)<\delta(x, \frac{1}{n})\leq \frac{1}{n}$. See that $\frac{2}{n}<d(z,y) \leq d(z,x) + d(x,y)<d(z,x)+\frac{1}{n}$, so $d(z,x)>\frac{1}{n}$, which means $z \in Y_{x, \frac{1}{n}}$. From condition \eqref{NotBeingInRelation} we get that $(y,z) \notin E$ and $(z,y) \notin E$, which gives us a contradiction.
This proves that $I=\bigcap_{n \in \N}V_{\frac{1}{n}}$, which means $I$ is a $G_\delta$ set.
\end{proof}
\begin{thm}
$\mathbb{P}=\{0,1,2,3\}^\w$ has a dense $G_\delta$ set of isolated points.
\end{thm}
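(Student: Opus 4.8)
The plan is to deduce the statement by feeding the structure of $\mathbb{P}$ into the general topological result just established in Theorem~\ref{IsolatedPointsAreG-Delta}. Write $I$ for the set of points of $\mathbb{P}=\{0,1,2,3\}^\w$ that are isolated with respect to the order $\leq$. First I would check the hypotheses of Theorem~\ref{IsolatedPointsAreG-Delta}: the space $\{0,1,2,3\}^\w$ is compact and metrizable (it is the four-symbol Cantor space), and the order is a closed subset of $\mathbb{P}\times\mathbb{P}$. The latter follows from Lemma~\ref{RelationOnInverseLimitIsClosed} together with Theorem~\ref{PIsTopIsoToInvFrLim}: since $\phi$ is a topological isomorphism, $\phi\times\phi$ is a homeomorphism carrying $\leq$ on $\mathbb{P}$ onto the relation $E$ on the inverse limit, and $E$ is closed; thus $\leq$ is closed as the preimage of a closed set under a continuous map. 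Theorem~\ref{IsolatedPointsAreG-Delta} then yields immediately that $I$ is a $G_\delta$ subset of $\mathbb{P}$.

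It remains to show that $I$ is dense, and for this I would exhibit an explicit dense family contained in $I$. The key claim is that every $x\in\{0,1,2,3\}^\w$ with infinitely many coordinates lying in $\{2,3\}$ is order-isolated. To verify this I would inspect the two ways in which a point can be comparable to a distinct point and observe that each forces all but finitely many coordinates to lie in $\{0,1\}$: the condition (i)${}_\w$ for $x\leq y$ (and, symmetrically, for $y\leq x$) requires $x(k)\in\{0,1\}$ for every $k>n$, while (ii)${}_\w$ requires $x(k)\in\{0,1\}$ for every $k\geq 1$. A sequence with infinitely many coordinates in $\{2,3\}$ satisfies neither, so it is comparable to no other point and therefore lies in $I$.

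Finally, for density I would take an arbitrary nonempty basic clopen set $\{x\in\mathbb{P}: x(j)=s(j)\text{ for }j<k\}$ determined by a finite word $s\in\{0,1,2,3\}^k$, and extend $s$ by the constant tail $2,2,2,\dots$; the resulting sequence lies in the given cylinder and has infinitely many coordinates equal to $2$, so by the claim it belongs to $I$. Hence $I$ meets every nonempty basic open set and is dense, so $I$ is a dense $G_\delta$ set of order-isolated points. I do not expect any single step to present a serious obstacle; the only place demanding care is the case analysis behind the claim, where one must treat both the lower and the upper role in (i)${}_\w$ and (ii)${}_\w$ and confirm that unboundedly many $\{2,3\}$-coordinates rule out every case.
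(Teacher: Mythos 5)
Your proposal is correct and follows essentially the same route as the paper: the $G_\delta$ part is obtained by combining Lemma~\ref{RelationOnInverseLimitIsClosed}, Theorem~\ref{PIsTopIsoToInvFrLim} and Theorem~\ref{IsolatedPointsAreG-Delta}, and density is witnessed by extending any finite word with the constant tail $2,2,2,\dots$. The only cosmetic difference is that you isolate a slightly larger family (all sequences with infinitely many coordinates in $\{2,3\}$, rather than the paper's set of sequences eventually equal to $2$), but the density witnesses and the case analysis against (i)${}_\w$ and (ii)${}_\w$ are the same.
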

\begin{proof}
From Lemma \ref{RelationOnInverseLimitIsClosed}, Theorem \ref{PIsTopIsoToInvFrLim} and  Theorem \ref{IsolatedPointsAreG-Delta} we get that the set of isolated points in $\mathbb{P}$ is $G_\delta$. So we only need to show that it is dense. 

Let $A:=\{(x_0, x_1, \dots) \in \mathbb{P}: x_n=2$ for all but finitely many $n$'s$\}$. Immediately from the definition of order relation on $\mathbb{P}$ we obtain that $A \subset I$, where $I$ is the set of isolated points in $\mathbb{P}$. We will show that $A$ is dense in $\mathbb{P}$. Take any basic set $\{x_0\} \times \{x_1\} \times \dots \times \{x_n\} \times \{0,1,2,3\} \times \dots$. See that $(x_0, x_1, x_2, \dots, x_n, 2, 2, 2, \dots)$ is an element of both $A$ and  $\{x_0\} \times \{x_1\} \times \dots \times \{x_n\} \times \{0,1,2,3\} \times \dots$, which means $A$ is dense, thus $I$ is also dense.
\end{proof}

\section{The inverse system of order ideals}\label{Section_The_inverse_system_of_order_ideals}
\subsection{Induced quotient maps}
Now we are going to study the following problem. Let $\langle\{P_n\}_{n \in \N},\{p^n_k\}_{k<n}\rangle$ be an inductive sequence consisting of finite posets with quotient maps and $P=\varprojlim \langle\{P_n\}_{n \in \N},\{p^n_k\}_{k<n}\rangle$. From Lemma \ref{Isomorphism_Of_P_And_PD} we know that each of $P_n$ embeds into its order ideal $\O(P_n)$ by mapping $x \mapsto \dol x$. The question is whether we can find quotient mapping from $\O(P_{n+1})$ onto $\O(P_n)$ such that the following diagrams 
\begin{equation}\label{DiagramToFill}
	\begin{tikzcd}
	P_1 \arrow[d, "\downarrow x"'] & \arrow[d, "\downarrow x"'] \arrow[l, "p_1 ^2"'] P_2 & \arrow[l, "p_2 ^3"'] \arrow[d, "\downarrow x"'] P_3 & \arrow[l, "p_3 ^4"'] \ldots & P \arrow[d, "?"]  \\
	\O(P_1) & \arrow[l, "?"] \O(P_2) & \arrow[l, "?"] \O(P_3) & \arrow[l, "?"] \ldots & \text{ ? }  \\
	\end{tikzcd}
\end{equation}
commute. Our aim is to show that question marks in the diagram can be superseded by appropriate arrows and a limit object. We will study the properties of that arrows and an object. Firstly, in the following Theorem we define arrows between $\O(P_{n+1})$ and $\O(P_n)$. 
\begin{thm}\label{Posets_and_O(P)}
	Let $P,Q$ be finite posets. Let $\varphi:P \to \O(P)$ and $\psi:Q\to\O(Q)$ be the embeddings given by $x\mapsto\dol x$. Let $p:Q \to P$ be a quotient map. Let $\hat{p}: \O(Q) \to \O(P)$ be defined as follows: 
	\begin{itemize}
		\item $\hat{p}(\emptyset)=\emptyset$ 
		
		\item $\hat{p}(\dol x)=\dol p(x)$ for every $x \in Q$ 
		\item For every non-empty, non-principal down-set $A \in \O(Q)$ we take its canonical decomposition $\{\dol a_i: i<k\}$ and we put $\hat{p}(A)=\bigcup_{i=1} ^k \hat{p}(\dol a_i)$ 
	\end{itemize}
	Then $\hat{p}$ is a quotient map for posets, which is join-preserving and preserves join-irreducible elements.

	Furthermore the diagram
	\begin{equation}\label{DiagramWithP0}
	\begin{tikzcd}[ampersand replacement=\&]
	P \arrow[d, "\varphi"'] \& Q \arrow[l, "p"'] \arrow[d, "\psi"] \\
	\O(P)					\& \O(Q) \arrow[l, "\hat{p}"] \\
	\end{tikzcd}
	\end{equation}
	commutes.
 We will say that quotient map $\hat{p}$ is induced by quotient map $p$.
\end{thm}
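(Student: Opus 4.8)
The plan is to first replace the recursive, decomposition-based definition of $\hat p$ by a single closed formula, and then read off every asserted property from it. Concretely, I claim that for every $A \in \O(Q)$,
\[
\hat p(A) = \bigcup_{x \in A} \dol p(x),
\]
that is, $\hat p(A)$ is the down-closure in $P$ of the image $p[A]$. To verify this I would check the three clauses of the definition: the empty and principal cases are immediate (for $A=\dol a$, monotonicity of $p$ gives $\dol p(x)\subseteq\dol p(a)$ for all $x\le a$, so $\bigcup_{x\le a}\dol p(x)=\dol p(a)$), and for a general non-principal $A$ with canonical decomposition $\{\dol a_i: i\le k\}$ (Lemma \ref{DownsetLemma2}) one inclusion holds because every $x\in A$ lies below some $a_i$, hence $\dol p(x)\subseteq\dol p(a_i)$, while the reverse holds because each $a_i\in A$. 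This formula is the engine of the whole proof, and it already shows that the output is a down-set, being a union of principal down-sets.

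Granting the formula, most claims are one-liners. Commutativity of \eqref{DiagramWithP0} is just the second defining clause, $\hat p(\psi(x))=\hat p(\dol x)=\dol p(x)=\varphi(p(x))$. Join-preservation follows since joins in $\O$ are unions: $\hat p(A\cup B)=\bigcup_{x\in A\cup B}\dol p(x)=\hat p(A)\cup\hat p(B)$; in particular $\hat p$ is monotone, hence an order homomorphism. Preservation of join-irreducibles is immediate from Lemma \ref{Join_Irreducible_Elements}, which identifies $\J(\O(Q))=\{\dol x:x\in Q\}$ and $\J(\O(P))=\{\dol y:y\in P\}$: each generator $\dol x$ is sent to $\dol p(x)$, again a generator. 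Surjectivity then follows by writing an arbitrary $S\in\O(P)$ as the union of the principal down-sets of its maximal elements, lifting each maximal element through the onto map $p$, and using join-preservation.

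The one step that requires an actual construction is the strict quotient condition: given $S\subseteq T$ in $\O(P)$, I must produce $A\subseteq B$ in $\O(Q)$ with $\hat p(A)=S$ and $\hat p(B)=T$. Let $\{z_i:i\le\ell\}$ and $\{y_j:j\le m\}$ be the maximal elements of $S$ and of $T$, and use surjectivity of $p$ to pick $a_i,c_j\in Q$ with $p(a_i)=z_i$ and $p(c_j)=y_j$. Put $A:=\bigcup_i\dol a_i$ and $B:=A\cup\bigcup_j\dol c_j$; both are down-sets and $A\subseteq B$ by construction. The formula gives $\hat p(A)=\bigcup_i\dol z_i=S$ and $\hat p(B)=S\cup\bigcup_j\dol y_j=S\cup T=T$, the last equality because $S\subseteq T$. (The degenerate cases $S=\emptyset$ or $T=\emptyset$ are handled by taking the corresponding $A$ or $B$ empty.)

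The part I expected to be the obstacle — the strict lifting of the order in $\O(P)$ — turns out to be mild: because we are free to choose $B$ as well as $A$, we never need to find a preimage of $S$ sitting below a \emph{prescribed} preimage of $T$ (which the quotient property of $p$ would not directly supply); we only ever need surjectivity of $p$ together with join-preservation. The genuine content of the theorem is therefore concentrated in establishing the closed formula and in invoking the structural lemmas on canonical decompositions and join-irreducibles.
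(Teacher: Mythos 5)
Your proof is correct, but it departs from the paper's at the decisive step, and in a way worth noting. For union-preservation the two arguments carry the same content: your closed formula $\hat p(A)=\bigcup_{x\in A}\dol p(x)$ (the down-closure of $p[A]$) is a cleaner packaging of what the paper proves by a two-case analysis on pairs of principal down-sets ($\dol x\parallel\dol y$ versus comparable), followed by induction and the canonical decomposition of Lemma \ref{DownsetLemma2}; commutativity of the diagram, preservation of join-irreducibles via Lemma \ref{Join_Irreducible_Elements}, and surjectivity are then handled the same way in both. The genuine divergence is the quotient (strict homomorphism) condition. The paper, given $Y\subset Z$ in $\O(P)$, matches each maximal element $y_k$ of $Y$ with some $z_{l_k}\geq y_k$ among the maximal elements of $Z$ and invokes the hypothesis that $p$ is a quotient map to lift each comparability $y_k\le z_{l_k}$ to a comparability $y_k'\le z_{l_k}'$ in $Q$; the inclusion $Y'\subseteq Z'$ of the constructed preimages is inherited from these lifted pairs. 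You sidestep the lifting entirely: you choose arbitrary preimages $a_i$ of the maximal elements of $S$ and $c_j$ of the maximal elements of $T$, set $A=\bigcup_i\dol a_i$ and $B=A\cup\bigcup_j\dol c_j$, so $A\subseteq B$ holds by construction, and compute $\hat p(B)=\hat p(A)\cup\bigcup_j\dol p(c_j)=S\cup T=T$ from join-preservation and $S\subseteq T$. This uses only surjectivity of the homomorphism $p$, never its quotient property, so your argument proves the marginally stronger statement that $\hat p$ is a quotient map whenever $p$ is merely an onto homomorphism, and it is shorter. What the paper's lifting construction buys, in exchange, is control over the \emph{form} of the preimages: it produces comparable preimages of comparable \emph{principal} down-sets, which is exactly what is needed later (e.g.\ in the proof of Theorem \ref{InducedQuotientMapping}, where the claim that $q$ is a quotient map on principal elements rests on lifting a comparability to principal preimages); your union trick cannot serve there, since taking $B=A\cup\bigcup_j\dol c_j$ destroys principality.
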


Let us stress the fact that one cannot obtain a quotient map for lattices in such situation. We present an example that illustrates this. We say that a mapping $p$ preserves meets, if $p(x\wedge y)=p(x)\wedge p(y)$ for all $x$ and $y$.
\begin{exa}\label{ExampleNotPreservingMeets}
Consider posets $Q:=\overline{\mathbf{1}}$ and $P:=\overline{\mathbf{2}}$ and quotient map $p:P \rightarrow Q$ such that $p \equiv 1$. Let $\hat{p}$ be a homomorphism induced by $p$. Then $\hat{p}(\dol 1 \cap \dol 2)=\hat{p}(\emptyset)=\emptyset$ and $\hat{p}(\dol 1)= \dol 1=\hat{p}(\dol 2)$, thus $\hat{p}(\dol 1) \cap \hat{p}(\dol 2)=\dol 1 \neq \emptyset$. Therefore $\hat{p}$ does not preserve meets.
\end{exa}
Now we are ready to begin the proof of Theorem \ref{Posets_and_O(P)}.
\begin{proof}
 By the definition of $\hat{p}$ the diagram \eqref{DiagramWithP0} commutes and preserves join-irreducible elements.
Now we will show that $\hat{p}$ preserves set unions. Firstly, we show that $\hat{p}(\dol x \cup \dol y)=\hat{p}(\dol x) \cup \hat{p}(\dol y)$. Let us consider two cases: 
\begin{enumerate}
	\item[(i)] If $ \dol x \parallel \dol y$ then from Lemma \ref{DownsetLemma2} and definition of $\hat{p}$ we get $\hat{p}(\dol x \cup \dol y)= \hat{p}(\dol x) \cup \hat{p}(\dol y)$
	\item[(ii)] If $ \dol x \subset \dol y$ then $x\leq y$ which implies $p(x)\leq p(y)$. Thus $\hat{p}(\dol x)=\dol p(x) \subset  \dol p(y)=\hat{p}(\dol y)$ so $\hat{p}(\dol x) \cup \hat{p}(\dol y)=\hat{p}(\dol y)=\hat{p}(\dol x \cup \dol y)$.
	(Case $ \dol y \subset \dol x$ goes similarly).
\end{enumerate}
By induction we obtain $\hat{p}\left(\bigcup_{k=1} ^{n} \dol x_k\right)=\bigcup_{k=1} ^{n} \hat{p}(\dol x_k)$ for any $\dol x_1, \dots, \dol x_n \in \J(\O(Q))$. 
Now we deal with a general case. Let $X,Y \in \O(Q)$. Then $X= \bigcup_{k=1} ^{n} \dol x_k$ and $Y= \bigcup_{k=1} ^{m}  \dol y_k$. So $X \cup Y = \bigcup_{k=1} ^{n}  \dol x_k \cup \bigcup_{k=1} ^{m} \dol y_k$,  thus $\hat{p} (X \cup Y)= \bigcup_{k=1} ^{n}  \hat{p}(\dol x_k) \cup \bigcup_{k=1} ^{m} \hat{p}(\dol y_k)=\hat{p}(X) \cup \hat{p}(Y)$. Note that if $X \subset Y$, then $X \cup Y=Y$ and consequently $\hat{p}(X) \subset\hat{p}(X \cup Y)= \hat{p}(X) \cup \hat{p}(Y)=\hat{p} (Y)$. This shows that $\hat{p}$ is an order homomorphism as well. 

Now we will show that $\hat{p}$ is surjective. Let $Z \in \O(P)$. Then $Z= \bigcup_{k=1} ^n  \dol z_k$. For each $z_k$ we find $x_k$ such that $p(x_k)=z_k$, in other words $\widehat{p}(\dol x_k)=\dol z_k$.  Then $$\hat{p}\left(\bigcup_{k=1} ^{n}  \dol x_k \right)=\bigcup_{k=1} ^{n} \hat{p}(\dol x_k)=\bigcup_{k=1} ^{n} \dol z_k = Z.$$ 

Now we will show that $\hat{p}$ is a strict poset homomorphism. Let $Y,Z \in \O(P)$ and $Y \subset Z$. 
Then $Y= \bigcup_{k=1} ^n  \dol y_k \subset Z= \bigcup_{k=1} ^m  \dol z_k$.  
Then for each $y_k$ we find $z_{l_k}$ such that $\dol y_k \subset \dol z_{l_k}$ which means that $y_k\leq z_{l_k}$. As $p$ is strict, there are $y_k',z_{l_k}'\in Q$ with $y_k'\leq z_{l_k}'$, $p(y_k')=y_k$ and $p(z_{l_k}')=z_{l_k}$.  

Put $Y':=\bigcup_{k=1} ^n  \dol y_k ^{'}$.  Then $\hat{p}\left( Y'\right)=Y$. Further for each $z_p$ such that $p \neq l_k$ for all $k=1,\ldots, n$ we find $z_p ^{''} \in Q$ such that $\hat{p}(\dol z_p ^{''})=\dol z_p$. Then $Y^{'} \subset Z'$ where $Z':= \bigcup_{k=1} ^n \dol z_{l_k} ^{'}  \cup  \bigcup \dol z_p ^{''}$ and the latter union is over all $p$ not in $\{l_k:k\leq n\}$. Since $\hat{p}(Z')=Z$, $\hat{p}$ is a quotient map for posets.
\end{proof}

It turns out that the value of mapping $\hat{p}(A)$ does not depend on how one represents $A$ as a union of principal down-sets.  
\begin{rem}\label{ValueOfIQMDoesntDependOnDecomposition}
Let $P,Q$ be finite posets, $p:Q \to P$ be a quotient map and $\hat{p}$ be a quotient map induced by $p$. Let $A \subset Q$ be a down-set with canonical decomposition $\{\dol x_i: i<n\}$ and let $\bigcup_{i=1} ^m \dol y_i=A$. Then $$\bigcup_{i=1} ^m \dol p(y_i)=\bigcup_{i=1} ^m \hat{p}(\dol y_i)=\hat{p}\left(\bigcup_{i=1} ^m \dol y_i\right)=\hat{p}(A)=\bigcup_{i=1} ^n \dol p(x_i).$$ It follows that if $\bigcup_{i=1} ^m \dol y_i=\bigcup_{i=1} ^l \dol z_i \subset Q$, then $\bigcup_{i=1} ^m \dol p(y_i)=\bigcup_{i=1} ^l \dol p(z_i)$.
\end{rem}

To properly define the inductive sequence of $\O(P_n)$'s such that diagram \eqref{DiagramToFill} commutes we need to show that $\hat{p}^n_k=\hat{p}^{k+1}_k\circ\dots\circ\hat{p}^{n}_{n-1}$ for all $n, k \in \N$ with $k<n$, which follows from the following.
\begin{lem}\label{InducedMappingsComposition}
		Let $P, Q, H$ be finite posets and $p:Q \to P$, $q:P \to H$ be quotient mappings. Then $\widehat{q \circ p}=\widehat{q} \circ \widehat{p}$.
\end{lem}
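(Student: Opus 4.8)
The plan is to verify that the two maps $\widehat{q\circ p}$ and $\widehat q\circ\widehat p$, both of which carry $\O(Q)$ into $\O(H)$, agree on every element of $\O(Q)$. Since the composition of quotient maps is again a quotient map, $q\circ p\colon Q\to H$ is a quotient map and hence $\widehat{q\circ p}$ is well defined. I would then check agreement on the three kinds of down-sets that appear in the definition of an induced map, in increasing order of generality.

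On the empty set both sides clearly return $\emptyset$. On a principal down-set $\dol x$ with $x\in Q$ the computation is direct: by definition $\widehat{q\circ p}(\dol x)=\dol(q\circ p)(x)=\dol q(p(x))$, while $\widehat q(\widehat p(\dol x))=\widehat q(\dol p(x))=\dol q(p(x))$, so the two coincide.

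For a general non-empty down-set $A\in\O(Q)$ let $\{\dol a_i:i<k\}$ be its canonical decomposition (Lemma \ref{DownsetLemma2}). Then by the defining formula $\widehat{q\circ p}(A)=\bigcup_{i}\widehat{q\circ p}(\dol a_i)=\bigcup_{i}\dol q(p(a_i))$. On the other hand $\widehat p(A)=\bigcup_i\dol p(a_i)$. The crucial point is that the family $\{\dol p(a_i):i<k\}$ need not be the canonical decomposition of $\widehat p(A)$ -- the images $p(a_i)$ may be comparable or even equal -- so one cannot simply read off $\widehat q(\widehat p(A))$ from the definition. This is where I would invoke Remark \ref{ValueOfIQMDoesntDependOnDecomposition}: since $\widehat p(A)=\bigcup_i\dol p(a_i)$ exhibits $\widehat p(A)$ as a union of principal down-sets, that remark yields $\widehat q(\widehat p(A))=\bigcup_i\dol q(p(a_i))$, independently of the decomposition used. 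Hence both sides equal $\bigcup_i\dol q(p(a_i))$, which finishes the argument.

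The only genuine obstacle is the one just flagged: the definition of $\widehat q$ is phrased in terms of the canonical decomposition of its argument, yet $\{\dol p(a_i)\}$ fails to be canonical in general. The decomposition-independence recorded in Remark \ref{ValueOfIQMDoesntDependOnDecomposition} is precisely the tool that removes this difficulty, and everything else is routine bookkeeping across the three cases.
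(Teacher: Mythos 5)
Your proof is correct, and its skeleton --- the split into the cases $A=\emptyset$, $A$ principal, and $A$ a general non-empty down-set, with the first two settled by direct computation from the definition --- is exactly the paper's. The only divergence is in the last case. Writing $\{\dol a_i : i<k\}$ for the canonical decomposition of $A$, the paper chains equalities without ever naming the common value: $\widehat{q\circ p}(A)=\bigcup_i\widehat{q\circ p}(\dol a_i)=\bigcup_i\hat q(\hat p(\dol a_i))=\hat q\bigl(\bigcup_i\hat p(\dol a_i)\bigr)=\hat q\bigl(\hat p\bigl(\bigcup_i\dol a_i\bigr)\bigr)=(\hat q\circ\hat p)(A)$, where the middle two steps invoke the union-preservation of induced maps established in Theorem \ref{Posets_and_O(P)}. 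You instead evaluate both sides down to the explicit set $\bigcup_i\dol q(p(a_i))$ and bridge the gap with Remark \ref{ValueOfIQMDoesntDependOnDecomposition}. The two routes rest on the same underlying fact --- the Remark is itself a consequence of that union-preservation --- so they are nearly equivalent, and yours is non-circular since the Remark precedes the Lemma in the paper. What your version buys is transparency about the pitfall you flag: $\{\dol p(a_i)\}$ need not be the canonical decomposition of $\hat p(A)$, so the raw definition of $\hat q$ cannot be applied to it directly; the paper's proof sidesteps this silently, because union-preservation in Theorem \ref{Posets_and_O(P)} is proved for arbitrary down-sets with no canonicity assumption, whereas your citation of the Remark addresses it head-on.
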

\begin{proof}
Let $A \in \O(Q)$. Consider the following cases:
\begin{itemize}
    \item if $A=\emptyset$, then $\widehat{q \circ p}(A)=\emptyset=\hat{q}(\emptyset)=\hat{q}(\hat{p}(\emptyset))$.
    
    \item if $A$ is a principal down-set $\dol x$, then $\widehat{q \circ p}(\dol x)=\dol q(p(x))=\hat{q}(\dol p(x))=\hat{q}(\hat{p}(\dol x))=(\hat{q} \circ \hat{p})(\dol x)$.
    
    \item if $A$ is non-empty, non-principal down-set, then let $\{\dol x_i:i<n\}$ be its canonical decomposition. Then $\widehat{q \circ p}(A)=\bigcup_{i=1} ^n \widehat{q \circ p}(\dol x_i)= \bigcup_{i=1}^n \hat{q}(\hat{p}(\dol x_i))=\hat{q}(\bigcup_{i=1} ^n \hat{p}(\dol x_i))=\hat{q}(\hat{p}(\bigcup_{i=1}^n \dol x_i))= (\hat{q} \circ \hat{p})(A)$.
\end{itemize}
Thus $\widehat{q \circ p}(A)=(\hat{q} \circ \hat{p})(A)$ for every $A \in \O(Q)$, which gives the assertion.
\end{proof}
Now, we provide the equivalent conditions for the induced mapping to be lattice homomorphism.
\begin{thm}
Let $P,Q$ be finite posets and $p: Q \to P$ be a quotient map. Then the following are equivalent.
\begin{enumerate}
    \item[(1)] $\hat{p}$ is a lattice-homomorphism.
    \item[(2)] $ \bigcup_{z \in \dol\: x \cap \dol\: y} \limits \dol p(z) \supset \dol p(x) \cap \dol p(y)$ for all $x,y \in Q$.
    \item[(3)] If $t \leq p(x)$ and $t \leq p(y)$, then there is $z \in Q$, such that $z \leq x, z \leq y$ and $t \leq p(z)$, for all $x,y \in Q$ and $t \in P$.
\end{enumerate}
\end{thm}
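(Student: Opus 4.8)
The plan is to first record the ambient structure: both $\O(P)$ and $\O(Q)$ are \emph{distributive} lattices in which join is union and meet is intersection, and by Theorem~\ref{Posets_and_O(P)} the induced map $\hat p$ is always join-preserving. Consequently $\hat p$ is a lattice homomorphism precisely when it is meet-preserving, so condition (1) is equivalent to the equality $\hat p(A\cap B)=\hat p(A)\cap\hat p(B)$ holding for all $A,B\in\O(Q)$. With this reduction in hand I would prove (2)$\Leftrightarrow$(3) as a translation of quantifiers, and (1)$\Leftrightarrow$(2) as the substantive part.

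For (2)$\Leftrightarrow$(3) I would simply read memberships: $t\in\dol p(x)\cap\dol p(y)$ says exactly $t\le p(x)$ and $t\le p(y)$, while $t\in\bigcup_{z\in\dol x\cap\dol y}\dol p(z)$ says exactly that there is $z$ with $z\le x$, $z\le y$ and $t\le p(z)$. Thus the inclusion in (2) holds for a fixed pair $x,y$ if and only if the implication in (3) holds for that pair, and quantifying over $x,y$ (and $t$) gives the equivalence.

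For (1)$\Leftrightarrow$(2) the first observation is that the left-hand side of (2) is precisely $\hat p(\dol x\cap\dol y)$: indeed $\dol x\cap\dol y=\bigcup_{z\in\dol x\cap\dol y}\dol z$, and join-preservation (together with Remark~\ref{ValueOfIQMDoesntDependOnDecomposition}) gives $\hat p(\dol x\cap\dol y)=\bigcup_{z\in\dol x\cap\dol y}\dol p(z)$, while the right-hand side is $\hat p(\dol x)\cap\hat p(\dol y)$. Since $\hat p$ is monotone, the inclusion $\hat p(\dol x\cap\dol y)\subseteq\hat p(\dol x)\cap\hat p(\dol y)$ holds automatically, so (2) is equivalent to the \emph{equality} $\hat p(\dol x\cap\dol y)=\hat p(\dol x)\cap\hat p(\dol y)$ for all $x,y\in Q$, i.e.\ to meet-preservation restricted to principal down-sets. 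Hence (1)$\Rightarrow$(2) is immediate.

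The real content is (2)$\Rightarrow$(1). Given arbitrary $A,B\in\O(Q)$, I would take canonical decompositions $A=\bigcup_{i}\dol a_i$ and $B=\bigcup_{j}\dol b_j$, so that $A\cap B=\bigcup_{i,j}(\dol a_i\cap\dol b_j)$. Applying join-preservation of $\hat p$, then the principal-case equality supplied by (2), and finally the distributive law in the set lattice $\O(P)$ yields
\[
\hat p(A\cap B)=\bigcup_{i,j}\hat p(\dol a_i\cap\dol b_j)=\bigcup_{i,j}\bigl(\hat p(\dol a_i)\cap\hat p(\dol b_j)\bigr)=\Bigl(\bigcup_i\hat p(\dol a_i)\Bigr)\cap\Bigl(\bigcup_j\hat p(\dol b_j)\Bigr)=\hat p(A)\cap\hat p(B).
\]
The empty-intersection case is absorbed automatically, since if $\dol a_i\cap\dol b_j=\emptyset$ then (2) forces $\hat p(\dol a_i)\cap\hat p(\dol b_j)=\emptyset$. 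Combined with join-preservation this makes $\hat p$ a lattice homomorphism, closing the cycle. The only obstacle is the bookkeeping of this computation — invoking distributivity of $\O(P)$ at exactly the right place and confirming via Remark~\ref{ValueOfIQMDoesntDependOnDecomposition} that the degenerate cases (empty meets, non-principal decompositions) are handled; the remaining implications are unpackings of the definitions.
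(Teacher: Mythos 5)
Your proposal is correct and follows essentially the same route as the paper: both reduce (1) to meet-preservation on principal down-sets (join-preservation of $\hat{p}$ being automatic from Theorem \ref{Posets_and_O(P)}), then extend to arbitrary down-sets by the identical double-union plus distributivity computation, and both treat (2)$\Leftrightarrow$(3) as a direct unpacking of memberships. The one point where you streamline the paper's argument is (1)$\Rightarrow$(2): where the paper runs a case analysis using canonical decompositions and Lemma \ref{CanonicalDecompositionLemma}, you observe via Remark \ref{ValueOfIQMDoesntDependOnDecomposition} that $\bigcup_{z \in \dol x \cap \dol y}\dol p(z)$ is exactly $\hat{p}(\dol x \cap \dol y)$, which makes that direction immediate and shows that the inclusion in (2) is in fact an equality.
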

\begin{proof}
The equivalence of (2) and (3) is pretty clear, so we will only show that (1) and (2) are equivalent. Assume that (2) is true.
Take any $x, y \in Q$. Observe that 
$$\bigcup_{z \in \dol\: x \cap \dol\: y} \dol p(z) = \bigcup_{\dol\: z \subset \dol\: x \cap \dol\: y} \hat{p}(\dol z) \subset \hat{p}(\dol x \cap \dol y)$$
It follows that $\hat{p}(\dol x) \cap \hat{p}(\dol y)=\dol p(x) \cap \dol p(y) \subset \hat{p}(\dol x \cap \dol y)$. 
Since $\hat{p}$ is an order-homomorphism, then $\hat{p}(\dol x \cap \dol y) \subset \hat{p}(\dol x) \cap \hat{p}(\dol y)$.
Thus $\hat{p}(\dol x) \cap \hat{p}(\dol y)=\hat{p}(\dol x \cap \dol y)$
Now take any two non-empty, down-sets $A,B$. Then $A=\bigcup_{i=1} ^n \dol a_i, B=\bigcup_{i=1} ^m \dol b_i$. See that
$$\hat{p}(A \cap B)=\hat{p}(\bigcup_{i=1} ^n \dol a_i \cap \bigcup_{i=1} ^m \dol b_i)=\hat{p}(\bigcup_{i=1}^n \bigcup_{j=1}^m \dol a_i \cap \dol b_j)=\bigcup_{i=1}^n \bigcup_{j=1}^m \hat{p}(\dol a_i \cap \dol b_j)=$$
$$=\bigcup_{i=1}^n \bigcup_{j=1}^m \hat{p}(\dol a_i)  \cap \hat{p}(\dol b_j)=\bigcup_{i=1} ^n \hat{p}(\dol a_i) \cap \bigcup_{i=1} ^m \hat{p}(\dol b_i)=\hat{p}(A) \cap \hat{p}(B).$$
Note that $\hat{p}(A \cap \emptyset)=\emptyset=\hat{p}(A) \cap \hat{p}(\emptyset)$ for any down-set $A$, thus $\hat{p}$ is a lattice-homomorphism.

Now, assume (1). Take any $x, y \in Q$. 
We need to consider two cases
\begin{itemize}
    \item $\dol x \cap \dol y = \emptyset$
    \item $\dol x \cap \dol y \neq \emptyset$
\end{itemize}
If $\dol x \cap \dol y = \emptyset$, then $\hat{p}(\dol x \cap \dol y)=\hat{p}(\dol x) \cap \hat{p}(\dol y) = \emptyset$. Thus $\dol p(x) \cap \dol p(y)\subset \bigcup_{z \in \dol\: x \cap \dol\: y} \dol p(z)$. 

If $\dol x \cap \dol y \neq \emptyset$ and $\dol x \cap \dol y =\bigcup_{i=1}^m \dol z_i=$ where $z_i$'s form its canonical decomposition, then $\hat{p}(\dol x \cap \dol y) \neq \emptyset$. Let $\hat{p}(\dol x \cap \dol y)=\bigcup_{i=1} ^n \dol r_i$ be its canonical decomposition. Since $\bigcup_{i=1}^m \dol p(z_i)=\bigcup_{i=1}^n \dol r_i$, then $\{r_1, \dots, r_n\} \subset \{p(z_1), \dots, p(z_m)\}$. Without loss of generality we may assume that $r_i=p(z_i)$ for $i=1, \dots, n$. Finally $$\dol p(x) \cap \dol p(y)=\hat{p}(\dol x \cap \dol y)=\bigcup_{i=1}^n \dol p(z_i)=\bigcup_{z \in \dol\: x \cap \dol\: y} \limits \dol p(z),$$
which means that (2) holds.
\end{proof}

\subsection{Properties of $\mathbb{O}(P)$
}
Let $\langle\{P_n\}_{n \in \N},\{p^n_k\}_{k<n}\rangle$ be an inductive sequence of finite posets with quotient mappings. Let $\hat{p}_k ^n:\O(P_n) \to \O(P_k)$ be a quotient mapping induced by $p_k ^n$ for $n \in \N$ and $k < n$, and let $P=\varprojlim \langle\{P_n\}_{n \in \N},\{p^n_k\}_{k<n}\rangle$.  Lemma \ref{InducedMappingsComposition} shows that $\langle \{\O(P_n)\}_{n \in \N}, \{\hat{p}^n_k\}_{k<n} \rangle$ is an inductive sequence in the category of finite posets.
We will now study the propeties of $\mathbb{O}(P):=\varprojlim \langle \{\O(P_n)\}_{n \in \N}, \{\hat{p}^n_k\}_{k<n} \rangle$.
\begin{thm}\label{EmbeddingTheorem}
 Let $\psi(x_1, x_2, \ldots)=(\dol x_1, \dol x_2, \ldots)$ for $(x_1, x_2, \dots) \in P$. Then $\psi:P \to \mathbb{O}(P)$ is an order-embedding.
\end{thm}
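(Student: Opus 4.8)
The plan is to verify two things: that $\psi$ actually lands in $\mathbb{O}(P)$, and that it satisfies the defining equivalence of an order-embedding, namely $\psi(x)\leq_{\mathbb{O}(P)}\psi(y)\iff x\leq_P y$. Injectivity then comes for free: if $\psi(x)=\psi(y)$, then $\psi(x)\leq_{\mathbb{O}(P)}\psi(y)$ and $\psi(y)\leq_{\mathbb{O}(P)}\psi(x)$ force $x\leq_P y$ and $y\leq_P x$, so $x=y$ by antisymmetry. Hence no separate injectivity argument is needed.

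First I would check well-definedness. An element of $\mathbb{O}(P)$ is a coherent sequence $(A_1,A_2,\dots)$ with $A_n\in\O(P_n)$ and $\hat{p}^{n+1}_n(A_{n+1})=A_n$. For $x=(x_1,x_2,\dots)\in P$ the coherence condition in $P$ is $p^{n+1}_n(x_{n+1})=x_n$. By Theorem \ref{Posets_and_O(P)} (equivalently, the commuting diagram \eqref{DiagramWithP0}), the induced map satisfies $\hat{p}(\dol q)=\dol p(q)$, so
\[
\hat{p}^{n+1}_n(\dol x_{n+1})=\dol\, p^{n+1}_n(x_{n+1})=\dol x_n .
\]
Thus $(\dol x_1,\dol x_2,\dots)$ is a coherent sequence and $\psi(x)\in\mathbb{O}(P)$, so $\psi$ is a genuine map $P\to\mathbb{O}(P)$.

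Next I would reduce the embedding claim to the coordinates. Both inverse limits carry the coordinatewise relation: $x\leq_P y$ iff $x_n\leq_{P_n}y_n$ for every $n$, and $\psi(x)\leq_{\mathbb{O}(P)}\psi(y)$ iff $\dol x_n\subset\dol y_n$ for every $n$. By Lemma \ref{DownsetLemma1}, for each fixed $n$ one has $x_n\leq_{P_n}y_n\iff\dol x_n\subset\dol y_n$. Applying this equivalence at every coordinate gives
\[
x\leq_P y\iff(\forall n)\,x_n\leq_{P_n}y_n\iff(\forall n)\,\dol x_n\subset\dol y_n\iff\psi(x)\leq_{\mathbb{O}(P)}\psi(y),
\]
which is exactly the order-embedding property. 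There is no serious obstacle here; the only points requiring care are correctly invoking the coordinatewise description of the order on each inverse limit and confirming the coherence identity $\hat{p}(\dol q)=\dol p(q)$, both of which are already available from the cited results.
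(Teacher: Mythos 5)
Your proposal is correct and follows essentially the same route as the paper's own proof: well-definedness via the commuting diagram of Theorem \ref{Posets_and_O(P)} giving $\hat{p}^{n+1}_n(\dol x_{n+1})=\dol x_n$, and the embedding property by applying Lemma \ref{DownsetLemma1} coordinatewise to the coordinatewise orders on both inverse limits. Your added remark that injectivity is automatic from the two-way order equivalence and antisymmetry is a correct (if implicit in the paper) observation.
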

\begin{proof}
 We will show that $\psi$ is an order-embedding into $\mathbb{O}(P)$.
Firstly we show that $\psi$ is well defined, more precisely it actually maps $P$ into $\mathbb{O}(P)$. Take $(x_1, x_2, \ldots) \in P$. We will show that $\hat{p}_n ^{n+1}(\dol x_{n+1})=\dol x_n$ for every $n \in \N$. Due to Theorem \ref{Posets_and_O(P)} diagram
\begin{equation}
\begin{tikzcd}
P_n \arrow[d, "\downarrow x"'] & P_{n+1} \arrow[l, "p_n ^{n+1}"'] \arrow[d, "\downarrow x"] \\
\O(P_n)					& \O(P_{n+1}) \arrow[l, "\hat{p}_n ^{n+1}"] \\
\end{tikzcd}
\end{equation}
commutes for every $n \in \N$, so $\hat{p}_n ^{n+1}(\dol x_{n+1})= \dol p_n ^{n+1}(x_{n+1})= \dol x_n$ for every $n \in \N$. Thus $\psi$ is properly defined.

Now we will show that it is an order embedding.
Let $(x_1, x_2, \ldots), (y_1, y_2, \ldots) \in P$. Then
$$(x_1, x_2, \ldots) \leq (y_1, y_2, \ldots) \iff x_n \leq y_n \text{ for every } n \in \N \iff$$ $$\dol x_n \subset \dol y_n \text{ for every } n \in \N \iff  
\psi(x_1, x_2, \dots)=(\dol x_1, \dol x_2, \dots) \leq (\dol y_1, \dol y_2, \dots)= \psi(y_1, y_2, \dots),$$ which makes $\psi$ an order embedding.
\end{proof}
\begin{Def}
We say that $\mathbf{x} \in \mathbb{O}(P)$ is a \emph{principal element} if $\mathbf{x}=(\dol a_1, \dol a_2, \dots)$ for some $(a_1, a_2, \dots) \in P$.
\end{Def}
We already know that $\mathbb{O}(P)$ is a poset (as an inverse limit of finite posets with quotients).  Surprisingly, despite the fact that induced quotient mapping does not preserve meets, see Example \ref{ExampleNotPreservingMeets}, it turns out that $\mathbb{O}(P)$ is actually a lattice.
\begin{thm}
$\mathbb{O}(P)$ is a lattice.
\end{thm}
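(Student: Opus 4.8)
The plan is to exhibit explicit suprema and infima of pairs in $\mathbb{O}(P)$, recalling that an element is a coherent sequence $(A_1,A_2,\dots)$ with $A_n\in\O(P_n)$ and $\hat{p}_n^{n+1}(A_{n+1})=A_n$, ordered coordinatewise by inclusion. Fix two elements $\mathbf{A}=(A_n)$ and $\mathbf{B}=(B_n)$.

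First I would handle the join, which is the easy half. The natural candidate is $(A_n\cup B_n)$. Each $A_n\cup B_n$ is again a down-set, hence lies in $\O(P_n)$, and coherence is immediate from the fact, established in Theorem \ref{Posets_and_O(P)}, that every $\hat{p}_n^{n+1}$ preserves set unions: $\hat{p}_n^{n+1}(A_{n+1}\cup B_{n+1})=\hat{p}_n^{n+1}(A_{n+1})\cup\hat{p}_n^{n+1}(B_{n+1})=A_n\cup B_n$. Since the order is coordinatewise inclusion, $(A_n\cup B_n)$ dominates both $\mathbf{A}$ and $\mathbf{B}$, and any common upper bound $(C_n)$ satisfies $A_n\cup B_n\subseteq C_n$ for every $n$; hence $(A_n\cup B_n)$ is the supremum.

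The hard part will be the meet, precisely because the induced maps need not preserve meets (Example \ref{ExampleNotPreservingMeets}), so the naive candidate $(A_n\cap B_n)$ need not be coherent: one only gets $\hat{p}_n^{n+1}(A_{n+1}\cap B_{n+1})\subseteq A_n\cap B_n$. My plan is to repair this using the finiteness of each $\O(P_n)$. For fixed $n$ consider the sets $\hat{p}_n^m(A_m\cap B_m)$ for $m\geq n$. Using that each $\hat{p}_n^m$ is order-preserving and that $\hat{p}_m^{m'}(A_{m'}\cap B_{m'})\subseteq A_m\cap B_m$ by coherence, one checks that $(\hat{p}_n^m(A_m\cap B_m))_{m\geq n}$ is a decreasing chain of down-sets in the finite lattice $\O(P_n)$, so it stabilizes. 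I would then define $D_n:=\bigcap_{m\geq n}\hat{p}_n^m(A_m\cap B_m)$, which equals $\hat{p}_n^M(A_M\cap B_M)$ for all sufficiently large $M$.

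Finally I would verify that $\mathbf{D}=(D_n)$ is the infimum. Coherence follows by choosing $M$ large enough to stabilize both the $n$-th and $(n+1)$-th chains and computing $\hat{p}_n^{n+1}(D_{n+1})=\hat{p}_n^{n+1}(\hat{p}_{n+1}^M(A_M\cap B_M))=\hat{p}_n^M(A_M\cap B_M)=D_n$, so $\mathbf{D}\in\mathbb{O}(P)$. It is a lower bound because $D_n=\hat{p}_n^M(A_M\cap B_M)\subseteq\hat{p}_n^M(A_M)=A_n$, and likewise $D_n\subseteq B_n$, using coherence of $\mathbf{A}$ and $\mathbf{B}$. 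To see it is the greatest lower bound, take any lower bound $\mathbf{E}=(E_n)$; then $E_M\subseteq A_M\cap B_M$, and since $\mathbf{E}$ is coherent, $E_n=\hat{p}_n^M(E_M)\subseteq\hat{p}_n^M(A_M\cap B_M)=D_n$ for large $M$, so $\mathbf{E}\leq\mathbf{D}$. This shows $\mathbf{D}=\mathbf{A}\wedge\mathbf{B}$, and together with the join above proves that $\mathbb{O}(P)$ is a lattice. The only genuine obstacle is the failure of meet-preservation, which is exactly what the stabilization of these decreasing chains circumvents.
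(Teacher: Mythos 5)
Your proof is correct, and for the meet it takes a genuinely different route from the paper. The join half coincides with the paper's argument: coordinatewise unions, coherent because the induced maps preserve unions. For the infimum, the paper proceeds greedily, coordinate by coordinate: it considers the set of all possible first coordinates of lower bounds of the pair, observes that this set is closed under unions (since, by the join half, the supremum of two lower bounds is again a lower bound), so being a subset of the finite lattice $\O(P_1)$ it has a greatest element $r_1$; it then repeats the argument among lower bounds whose first coordinate is $r_1$ to obtain $r_2$, and so on, finally checking that $(r_1,r_2,\dots)$ is coherent and is the greatest lower bound. Your construction instead yields an explicit formula: $D_n$ is the eventual value of the chain $\hat{p}_n^m(A_m\cap B_m)$, $m\geq n$, which is decreasing because the bonding maps are order-preserving (so $\hat{p}_m^{m'}(A_{m'}\cap B_{m'})\subseteq A_m\cap B_m$ for $m'>m$) and stabilizes because $\O(P_n)$ is finite; coherence, the lower-bound property, and maximality among lower bounds then all follow by pushing inclusions through $\hat{p}_n^M$ for $M$ large, exactly as you write. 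Both arguments hinge on finiteness of the $\O(P_n)$, but yours has two advantages: it is independent of the join half (only monotonicity of the $\hat{p}_n^m$ is used, not join-preservation), and it exhibits the infimum concretely --- its $n$-th coordinate is $\hat{p}_n^M(A_M\cap B_M)$ for all large $M$ --- which is a pleasant counterpoint to the paper's remark right after this theorem that, unlike the supremum, the infimum admits no easy formula. The paper's greedy scheme, on the other hand, is the template reused later (with minimal instead of maximal choices) in the proof that $\mathbb{O}(P)$ is atomic, so it earns its keep elsewhere.
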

\begin{proof}
We will show that for every $x, y \in \mathbb{O}(P)$ their supremum exists. 
Let $(x_1, x_2, \ldots)$, $(y_1, y_2, \ldots) \in \mathbb{O}(P)$. It is easy to see that $(z_1, z_2, \ldots):=(x_1 \cup y_1, x_2 \cup y_2, \ldots)$ is an upper bound of $\lbrace x, y \rbrace $ because it is an element of $\mathbb{O}(P)$ as $\hat{p}_n ^{n+1}$ preserves set unions and $x_n \subset z_n, y_n \subset z_n$ for every $n \in \N$. Now we will show it is actually the smallest upper bound. Let $(s_1 ,s_2, \ldots)$ be any upper bound of $\lbrace x, y \rbrace $. Then for every $n \in \N$ we have $x_n \subset s_n, y_n \subset s_n$ thus $x_n \cup y_n \subset s_n$ so $ (z_1, z_2, \ldots) \leq (s_1, s_2, \ldots)$. Therefore $(z_1, z_2, \ldots)$ is the supremum of $\lbrace x, y \rbrace $.

Now we will show that $\lbrace x, y \rbrace $ has an infimum. 
It is easy to see that $(\emptyset, \emptyset, \ldots)$ is a lower bound of $\lbrace x, y \rbrace $. Consider the set $$A_1:= \lbrace z_1 \in \O(P_1): \text{ there is a lower bound of  } \lbrace x, y \rbrace \text{ such that } z_1 \text{ is its first coordinate} \rbrace.$$ Clearly it is a non-empty set. Because supremum of two lower bonds of $\lbrace x, y \rbrace$ is still a lower bound of $\lbrace x, y \rbrace$ we get that $A_1$ has the greatest element, say $r_1$. Now consider the set $$A_2:= \lbrace z_2 \in \O(P_2): \text{ there is a lower bound of  } \lbrace x, y \rbrace \text{ such that } r_1 \text{ is its first coordinate and } $$ $$ z_2 \text{ is the second} \rbrace.$$ Similarly this set is non-empty and it has the greatest element $r_2$. There is a lower bound of $\{x,y\}$ of the form $(r_1,r_2,z_3,z_4,\dots)\in\mathbb{O}(P)$, which in particular means that $\hat{p}^{2}_1(r_{2})=r_1$. 
Proceeding inductively we define $A_n$'s and a sequence $(r_1, r_2, \ldots)$. By the construction $\hat{p}^{n+1}_n(r_{n+1})=r_n$ for all $n \in \N$, which means $(r_1, r_2, \ldots) \in \mathbb{O}(P)$. Furthermore it is a lower bound of $\lbrace x, y \rbrace $.

Assume that $(s_1, s_2, \ldots)$ is a lower bound of $\{x,y\}$. Then by the first part of the proof $(s_1\cup r_1,s_2\cup r_2,\ldots)$ is a lower bound of $\{x,y\}$ as well and $r_i\leq s_i\cup r_i$. Thus by the definition of $A_i$'s and simple induction we get $r_i=s_i\cup r_i$. Thus $s_i\subset r_i$, and consequently $(s_1,s_2,\ldots)\leq(r_1,r_2,\ldots)$.
\end{proof}
Let us present some properties of a limit lattice $\mathbb{O}(P)$. Firstly note that $(\emptyset,\emptyset,\ldots)$ is zero (or the least) element of $\mathbb{O}(P)$. Moreover, $(P_1,P_2,\ldots)$ is one (or the greatest) element of $\mathbb{O}(P)$. As we have already proved $\sup\{(x_1,x_2,\ldots),(y_1,y_2,\ldots)\}=(x_1\cup y_1,x_2\cup y_2,\ldots)$. On the other hand there is not such easy formula for infimum. We will focus on more involved properties of $\mathbb{O}(P)$.  
\begin{thm}
	$\mathbb{O}(P)$ is atomic.
\end{thm}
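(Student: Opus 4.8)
The plan is to show that below every nonzero element of $\mathbb{O}(P)$ there lies a \emph{principal} atom, i.e.\ an element $\psi(c)$ with $c$ a minimal element of $P$. First I would record the useful remark that $\mathbf{x}=(x_1,x_2,\dots)$ is nonzero if and only if \emph{every} coordinate $x_n$ is non-empty: if $x_N\neq\emptyset$, then for $n<N$ the induced map sends the non-empty down-set $x_N$ to the non-empty down-set $\hat{p}^N_n(x_N)=x_n$, while for $n>N$ we have $\hat{p}^n_N(x_n)=x_N\neq\emptyset=\hat{p}^n_N(\emptyset)$, forcing $x_n\neq\emptyset$. So I fix $\mathbf{x}\neq 0$ and aim to produce a minimal $c^*\in P$ with $\psi(c^*)\leq\mathbf{x}$, together with a proof that $\psi(c^*)$ is an atom in the sense of Definition \ref{definitionAtom}.

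The key step I would isolate is a thread-extraction claim. For $\mathbf{z}=(z_1,z_2,\dots)\in\mathbb{O}(P)$ with all $z_n\neq\emptyset$, set $P_{\mathbf{z}}:=\{c\in P:c_n\in z_n \text{ for all } n\}$; the claim is that $P_{\mathbf{z}}\neq\emptyset$. Indeed, if $c\in z_n$ then $c\leq a$ for some maximal element $a$ of $z_n$, so $p^n_k(c)\leq p^n_k(a)$ and $p^n_k(c)\in\dol p^n_k(a)\subseteq\hat{p}^n_k(z_n)=z_k$; hence each bonding map restricts to $p^n_k|_{z_n}\colon z_n\to z_k$, and these restrictions stay compatible with composition. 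Since any element of $z_i$ projects into $z_k$ for $k<i$, each finite approximation set is non-empty, so Lemma \ref{FiniteSequencesLemma} applied to the inverse system of the non-empty finite sets $z_n$ yields a thread $c\in P_{\mathbf{z}}$. I would also note that $P_{\mathbf{z}}=\bigcap_n p_n^{-1}[z_n]$ is closed in the compact space $P$, hence compact.

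Next I would apply this to $\mathbf{x}$: pick $c\in P_{\mathbf{x}}$ and search for a minimal element of $P$ below it. By Lemma \ref{RelationOnInverseLimitIsClosed} the order relation on $P$ is closed, so the set $\{d\in P:d\leq c\}$ is closed, and for any chain $C$ inside it the closed sections $\{d:d\leq y\}$ ($y\in C$) have the finite intersection property; compactness then makes their intersection non-empty and supplies a lower bound of $C$. Zorn's Lemma gives a minimal element $c^*\leq c$ of $P$. Because $c^*_n\leq c_n\in x_n$ and $x_n$ is a down-set, $c^*\in P_{\mathbf{x}}$, so $\dol c^*_n\subseteq x_n$ for all $n$ and $\psi(c^*)\leq\mathbf{x}$.

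Finally I would check that the principal element $\psi(c^*)=(\dol c^*_1,\dol c^*_2,\dots)$ is an atom; it is clearly nonzero. Suppose $0\neq\mathbf{w}=(w_1,w_2,\dots)\leq\psi(c^*)$, so each $w_n\subseteq\dol c^*_n$ and, by the first paragraph, each $w_n\neq\emptyset$. Applying the extraction step to $\mathbf{w}$ produces $d\in P_{\mathbf{w}}$, i.e.\ $d_n\in w_n\subseteq\dol c^*_n$, whence $d_n\leq c^*_n$ for all $n$ and $d\leq c^*$ in $P$. Minimality of $c^*$ forces $d=c^*$, so $c^*_n=d_n\in w_n$; as $w_n$ is a down-set containing $c^*_n$ this gives $\dol c^*_n\subseteq w_n\subseteq\dol c^*_n$, hence $w_n=\dol c^*_n$ for every $n$ and $\mathbf{w}=\psi(c^*)$. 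Thus $\psi(c^*)$ is an atom below $\mathbf{x}$, proving that $\mathbb{O}(P)$ is atomic. The main obstacle is exactly the combination of the extraction step with the existence of $c^*$: the bonding maps $p^n_k$ need not carry minimal elements to minimal elements, so one cannot build the atom greedily coordinate by coordinate and must instead pass through the thread-space $P_{\mathbf{x}}$ and lean on the compactness of $P$ and the closedness of its order.
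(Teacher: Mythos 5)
Your proof is correct, but it takes a genuinely different route from the paper. The paper's argument never leaves the lattice side: for a nonzero $\mathbf{x}=(x_1,x_2,\dots)$ it greedily builds the atom coordinate by coordinate inside the finite lattices $\O(P_n)$, defining $A_n$ as the set of admissible $n$-th coordinates of nonzero elements strictly below $\mathbf{x}$ extending the already chosen coordinates $a_1,\dots,a_{n-1}$, and picking a \emph{minimal} element $a_n$ of the finite set $A_n$; the thread $(a_1,a_2,\dots)$ is then an atom below $\mathbf{x}$, the verification being a one-line contradiction with the minimality of the first coordinate where a smaller nonzero element could differ. This is elementary and uses only finiteness of each $\O(P_n)$. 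You instead pass to the profinite poset $P$ itself: you extract a thread $c\in P$ running through the coordinates of $\mathbf{x}$ (via Lemma \ref{FiniteSequencesLemma}), push it down to a minimal point $c^*$ of $P$ using compactness of $P$, closedness of the order (Lemma \ref{RelationOnInverseLimitIsClosed}) and Zorn's Lemma, and show that the principal element $\psi(c^*)$ is an atom below $\mathbf{x}$. Each step checks out: the restriction claim $p^n_k[z_n]\subseteq z_k$ follows correctly from the canonical decomposition, the application of Lemma \ref{FiniteSequencesLemma} is legitimate, and the atom verification via minimality of $c^*$ is sound. What your approach buys is structural information the paper's proof does not give: since below every nonzero element (in particular below every atom) there is an atom of the form $\psi(c^*)$, your argument shows that the atoms of $\mathbb{O}(P)$ are \emph{exactly} the principal elements $\psi(c^*)$ with $c^*$ minimal in $P$, tying the lattice $\mathbb{O}(P)$ back to the topological order structure of $P$; the price is heavier machinery. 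One caveat: your closing remark that ``one cannot build the atom greedily coordinate by coordinate'' is contradicted by the paper itself --- the greedy construction does work, provided one minimizes over admissible down-sets in $\O(P_n)$ rather than trying to choose minimal points of $P_n$ compatibly with the bonding maps.
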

\begin{proof}
	Take any $(x_1, x_2, \dots) \in \mathbb{O}(P) \setminus \{(\emptyset, \emptyset, \dots)\}$. We will find an atom $(a_1, a_2, \dots)$ , such that $(a_1, a_2, \dots) \leq (x_1, x_2, \dots)$. 
	If $(x_1, x_2, \dots)$ is an atom, then we can take $(a_1, a_2, \dots):=(x_1, x_2, \dots)$ and we get the required condition.
	Assume that $(x_1, x_2, \dots)$ is not an atom. Then there is $(y_1, y_2, \dots)\neq (\emptyset, \emptyset, \dots)$ with $(y_1, y_2, \dots)<(x_1, x_2, \dots)$.
	
	See that then set $$A_1:=\{z_1 \in \O(P_1)\setminus \{\emptyset\}: \text{there is a non-zero element of } \mathbb{O}(P) \text{ that is strictly less than }$$ 
	$$(x_1, x_2, \dots) \text{ and } z_1 \text{ is its first coordinate.}\}$$ is non-empty. As a non-empty subset of finite poset, it has a minimal element, say $a_1$. Thus we can find $(a_1, z_2, z_3, \dots) \in \mathbb{O}(P)$ with $(\emptyset, \emptyset, \dots)<(a_1, z_2, z_3, \dots)<(x_1, x_2, \dots)$. Thus we get that set
	$$A_2:=\{z_2 \in \O(P_2)\setminus \{\emptyset\}: \text{there is a non-zero element of } \mathbb{O}(P) \text{ that is strictly less than }$$  
	$$(x_1, x_2, \dots) \text{ and } a_1, z_2 \text{ are its first and second coordinate, respectively.}\}$$
	is non-empty. Similarly, as a non-empty subset of a finite poset, it has a minimal element, say $a_2$. Proceeding inductively, we define $A_n$'s and sequence $(a_1, a_2, \dots)$. Clearly, $\hat{p}_n ^{n+1}(a_{n+1})=a_n$ for each $n \in \N$. By the construction, we get that $(a_1, a_2, \dots) \leq (x_1, x_2, \dots)$.
	
	Now we will prove, that $(a_1, a_2, \dots)$ is an atom. Suppose that there is $(s_1, s_2, \dots) \in \mathbb{O}(P)$ such that $(\emptyset, \emptyset, \dots) \neq (s_1, s_2, \dots)<(a_1, a_2, \dots)$.
	Let $i$ be the smallest index such that $s_j=a_j$ for $j<i$, and $s_i<a_i$. Thus $s_i\in A_i$. But this contradicts the definition of $a_i$. 
\end{proof}

The following Lemmas show the structural properties of mappings induced by quotients. 

\begin{lem}\label{ExtendingLemma}
Let $Q,H$ be finite posets and $g:\O(Q) \rightarrow \O(H)$ be a quotient map induced by quotient map from $Q$ onto $H$. Let $q\in\O(Q)$. Then for every element $\dol a$ of the canonical decomposition of $g(q)$ we can find an element $\dol b$ of the canonical decomposition of $q$ such that $g(\dol b)=\dol a$.
\end{lem}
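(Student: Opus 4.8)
The plan is to reduce the statement directly to Lemma \ref{CanonicalDecompositionLemma} through the defining formula of the induced map. I write $g=\hat{p}$, where $p:Q\to H$ is the quotient map that induces $g$.

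First I would dispose of the degenerate case: if $q=\emptyset$, then $g(q)=\emptyset$ has an empty canonical decomposition, so the assertion holds vacuously. Hence I may assume $q\neq\emptyset$ and let $\{\dol b_1,\dots,\dol b_n\}$ be the canonical decomposition of $q$, so that $b_1,\dots,b_n$ are its pairwise distinct maximal elements by Lemma \ref{DownsetLemma2}. Applying the definition of $\hat{p}$ to this decomposition term by term yields
$$ g(q)=\hat{p}(q)=\bigcup_{i=1}^n\hat{p}(\dol b_i)=\bigcup_{i=1}^n\dol p(b_i), $$
so $\{\dol p(b_i):i\leq n\}$ exhibits $g(q)\in\O(H)$ as a union of principal down-sets.

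The key step is then to invoke Lemma \ref{CanonicalDecompositionLemma} for the down-set $g(q)$ together with this representation: the canonical decomposition of $g(q)$ must be contained in $\{\dol p(b_i):i\leq n\}$. Consequently every element $\dol a$ of the canonical decomposition of $g(q)$ equals $\dol p(b_i)$ for some $i\leq n$. Since $g(\dol b_i)=\hat{p}(\dol b_i)=\dol p(b_i)=\dol a$, the element $\dol b:=\dol b_i$ of the canonical decomposition of $q$ is exactly the witness we need.

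The argument is short, and I expect no genuine obstacle. The one point that needs care is that $\bigcup_{i=1}^n\dol p(b_i)$ need not itself be the canonical decomposition of $g(q)$: the sets $\dol p(b_i)$ may be comparable or may even coincide (as already happens in Example \ref{ExampleNotPreservingMeets}, where two distinct principal down-sets collapse onto one). This is precisely why one must route the argument through Lemma \ref{CanonicalDecompositionLemma}, rather than attempting to match the maximal elements of $q$ with those of $g(q)$ directly.
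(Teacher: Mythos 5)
Your proof is correct and takes essentially the same route as the paper's: both write $g(q)$ as the union of the images of the canonical decomposition of $q$ (you by unwinding the definition of $\hat{p}$, the paper by invoking the union-preservation property of $g$, which amounts to the same computation) and then apply Lemma \ref{CanonicalDecompositionLemma} to conclude that every member of the canonical decomposition of $g(q)$ is of the form $g(\dol b_i)$. Your added remarks — the vacuous case $q=\emptyset$ and the observation that the image union need not itself be canonical — are sound refinements, not a different argument.
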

\begin{proof}
Let $p=g(q)$ and $\{\dol p_i: i<m \},\
\{\dol q_i: i<n \}$ be the canonical decompositions of $p$ and $q$, respectively. Thus $p=\bigcup_{i<m}  \dol p_i$ and $q=\bigcup_{i<n}  \dol q_i$. We know that $g$ preserves  set unions, so $p=g(q)=g\left(\bigcup_{i<n} \dol q_i \right)=\bigcup_{i<n}  g(\dol q_i)$. Thus $\bigcup_{i<m} \dol p_i=\bigcup_{i<n} g(\dol q_i)$.
Then $g(\dol q_{i})$ is a principal down-set for every $i<n$. Using Lemma \ref{CanonicalDecompositionLemma} we get that $\{\dol p_i: i<m\} \subset \{\dol g(q_i): i<n \}$. Thus for every $\ell<m$ we find $k<n$ such that $\dol p_\ell=g(\dol q_{k})$
\end{proof}
\begin{lem}\label{HelpingLemma1}
Let $\mathbf{p}=(p_1, p_2, \ldots) \in \mathbb{O(\text{P})}$. Let $m\in\N$ and let $t$ be a principal down-set in the canonical decomposition of $p_m$. Then there is a principal element $(\dol x_1, \dol x_2, \ldots)$ such that 
\begin{enumerate}
    \item[(i)] $\dol x_i$ is in canonical decomposition of $p_i$ for every $i > m$;
    \item[(ii)] $\dol x_m=t$,
    \item[(iii)] $\dol x_i \leq p_i$ for every $i<m$.
\end{enumerate}
In particular $(\dol x_1, \dol x_2, \ldots) \leq \mathbf{p}$.
\end{lem}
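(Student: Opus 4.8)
The plan is to build the principal element coordinate by coordinate, starting from the given coordinate $m$ where we are told to match $\dol x_m = t$, then extending \emph{upwards} (to indices $i>m$) and \emph{downwards} (to indices $i<m$) using the structural properties of the induced quotient maps established earlier.

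First I would handle the downward coordinates, which is essentially forced. For $i<m$ simply set $\dol x_i := \hat{p}_i^m(t)$. By Remark \ref{ValueOfIQMDoesntDependOnDecomposition} together with the formula $\hat{p}(\dol a)=\dol p(a)$ from Theorem \ref{Posets_and_O(P)}, the image of a principal down-set under an induced map is again principal, so each $\dol x_i$ is genuinely of the form $\dol(\text{point})$. Moreover, since $t$ is one of the members of the canonical decomposition of $p_m$, we have $t \subset p_m$, and applying the order-homomorphism $\hat{p}_i^m$ gives $\dol x_i = \hat{p}_i^m(t) \subset \hat{p}_i^m(p_m) = p_i$, where the last equality holds because $\mathbf{p}\in\mathbb{O}(P)$ means the coordinates are coherent under the bonding maps. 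This gives condition (iii) for free. I would also note that $\hat{p}_i^m(\dol x_{i+1}) = \dol x_i$ for $i<m$ by Lemma \ref{InducedMappingsComposition} (composing the induced maps), so the downward part is automatically compatible.

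The main work is the upward extension, producing $\dol x_i$ for $i>m$ satisfying both (i) --- that $\dol x_i$ lies in the canonical decomposition of $p_i$ --- and the coherence $\hat{p}_{m}^{i}(\dol x_i)=t$ (and more generally $\hat{p}_i^{i+1}(\dol x_{i+1})=\dol x_i$). Here the key tool is Lemma \ref{ExtendingLemma}: applied with $g=\hat{p}_{i}^{i+1}$, $q=p_{i+1}$, it tells us that for every element $\dol a$ in the canonical decomposition of $\hat{p}_{i}^{i+1}(p_{i+1})=p_{i}$, there is an element $\dol b$ of the canonical decomposition of $p_{i+1}$ with $\hat{p}_{i}^{i+1}(\dol b)=\dol a$. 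Starting from $\dol x_m=t$ and inductively treating $t$ as the distinguished element of the canonical decomposition of $p_i$ at each stage, Lemma \ref{ExtendingLemma} lets me lift it to a distinguished member $\dol x_{i+1}$ of the canonical decomposition of $p_{i+1}$ mapping onto $\dol x_i$. Since the image of a principal down-set under an induced map is principal, each $\dol x_{i+1}$ obtained this way is itself principal, and being an element of a canonical decomposition gives condition (i).

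\textbf{The main obstacle} I anticipate is purely bookkeeping: verifying that the single sequence $(\dol x_1,\dol x_2,\ldots)$ assembled from the downward and upward pieces is a genuine \emph{principal element} of $\mathbb{O}(P)$, i.e.\ that it both lies in $\mathbb{O}(P)$ (coherence $\hat{p}_i^{i+1}(\dol x_{i+1})=\dol x_i$ for all $i$, which I verify separately for $i<m$, $i=m$ via the base case of the lift, and $i>m$ via the inductive lift) and is of the form $(\dol a_1,\dol a_2,\ldots)$ for some $(a_1,a_2,\ldots)\in P$. The latter follows because each coordinate is principal, say $\dol x_i=\dol a_i$, and the coherence $\hat{p}_i^{i+1}(\dol a_{i+1})=\dol a_i$ translates under the commuting square of Theorem \ref{Posets_and_O(P)} into $\dol p_i^{i+1}(a_{i+1})=\dol a_i$, which by Lemma \ref{DownsetLemma1} forces $p_i^{i+1}(a_{i+1})=a_i$, so $(a_1,a_2,\ldots)\in P$. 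Finally, (ii) holds by construction and (iii) was established above; combining (ii), (iii) and the coordinatewise order $\dol x_i\subset p_i$ (from (i) for $i\geq m$ and (iii) for $i<m$) yields $(\dol x_1,\dol x_2,\ldots)\leq\mathbf{p}$, completing the proof.
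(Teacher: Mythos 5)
Your proposal is correct and takes essentially the same route as the paper's proof: pick $x_m$ with $t=\dol x_m$ for (ii), lift upward through the canonical decompositions by applying Lemma \ref{ExtendingLemma} inductively for (i), and obtain the coordinates below $m$ as images under the induced maps for (iii); your write-up merely supplies the coherence/assembly bookkeeping that the paper leaves implicit. The only quibble is a notational slip: $\hat{p}_i^m(\dol x_{i+1})=\dol x_i$ should read $\hat{p}_i^{i+1}(\dol x_{i+1})=\dol x_i$.
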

\begin{proof}
Fix a principal down-set $t$ in the canonical decomposition of $p_m$. Then there is $x_m$ in $P_m$ with $t=\dol x_m$, which gives \textit{(ii)}.  Condition \emph{(i)}  immediately follows from Lemma \ref{ExtendingLemma} by a simple induction. By the definition of induced quotient map we obtain \emph{(iii)}.
\end{proof}

Now we present a lemma that allows us to represent elements of $\mathbb{O}(P)$ as supremum of countably many principal elements $(\dol x_1, \dol x_2, \ldots)$.
\begin{lem}\label{LatticeElementDecompositionLemma}
For any non-zero element $\mathbf{a} \in \mathbb{O}(P)$ there is a countable family $\mathcal{A}$ consisting of principal elements, such that $\sup(\mathcal{A})=\mathbf{a}$.
\end{lem}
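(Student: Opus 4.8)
The plan is to assemble $\mathcal{A}$ from the principal elements produced by Lemma \ref{HelpingLemma1}, taking one such element for every ``brick'' that appears in the canonical decomposition of some coordinate of $\mathbf{a}$, and then to verify directly that $\mathbf{a}$ is their least upper bound. Write $\mathbf{a}=(a_1,a_2,\ldots)$. For every $m\in\N$ and every principal down-set $t$ in the canonical decomposition of $a_m$, Lemma \ref{HelpingLemma1} (applied with $\mathbf{p}=\mathbf{a}$) furnishes a principal element $\mathbf{x}^{m,t}=(\dol x_1,\dol x_2,\ldots)\leq\mathbf{a}$ whose $m$-th coordinate is exactly $t$. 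I would let $\mathcal{A}:=\{\mathbf{x}^{m,t}: m\in\N,\ t \text{ in the canonical decomposition of } a_m\}$. Since each $a_m$ is a down-set in a finite poset, its canonical decomposition is finite, so for fixed $m$ there are only finitely many admissible $t$; ranging over the countably many $m$ then yields a countable family $\mathcal{A}$, as demanded.

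It remains to show $\sup(\mathcal{A})=\mathbf{a}$, and here I would use only the coordinatewise description of the order on $\mathbb{O}(P)$, namely $(u_n)\leq(v_n)$ iff $u_n\subset v_n$ for every $n$. First, $\mathbf{a}$ is an upper bound of $\mathcal{A}$, since every member was selected with $\mathbf{x}^{m,t}\leq\mathbf{a}$. For the least-upper-bound property, let $(s_1,s_2,\ldots)$ be an arbitrary upper bound of $\mathcal{A}$; I must check $a_n\subset s_n$ for each $n$. Fix $n$ and let $\{\dol c_j:j\leq r\}$ be the canonical decomposition of $a_n$, so that $a_n=\bigcup_{j\leq r}\dol c_j$. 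For each $j$ the element $\mathbf{x}^{n,\dol c_j}\in\mathcal{A}$ has $n$-th coordinate $\dol c_j$, and since $(s_n)$ dominates it in $\mathbb{O}(P)$ we obtain $\dol c_j\subset s_n$. Taking the union over $j\leq r$ gives $a_n\subset s_n$; as $n$ was arbitrary, $\mathbf{a}\leq(s_1,s_2,\ldots)$. Hence $\mathbf{a}$ is the least upper bound, i.e.\ $\sup(\mathcal{A})=\mathbf{a}$.

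The conceptual content is carried entirely by Lemma \ref{HelpingLemma1}, which guarantees that each individual brick $t$ of each coordinate $a_m$ can be realized as the corresponding coordinate of a genuine principal element lying below $\mathbf{a}$; the present argument is then only the bookkeeping that collecting all such bricks recovers $\mathbf{a}$ coordinate by coordinate. Accordingly I do not anticipate a serious obstacle. The one point that deserves care is the verification of the least-upper-bound property: one should resist trying to evaluate $\sup(\mathcal{A})$ by hand (which would invite an infinite coordinatewise-union argument and appeal to $\sigma$-completeness) and instead argue, as above, that an \emph{arbitrary} upper bound already dominates $\mathbf{a}$ on every coordinate, so that $\mathbf{a}$ itself is forced to be the supremum.
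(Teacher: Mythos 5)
Your proof is correct and follows essentially the same route as the paper's: both apply Lemma \ref{HelpingLemma1} to each principal down-set in the canonical decomposition of each coordinate $a_m$ to obtain principal elements below $\mathbf{a}$, collect these into a countable family, and then show any upper bound must dominate $\mathbf{a}$ coordinatewise. Your write-up is in fact slightly more explicit than the paper's at the final step, where the paper merely asserts that the union of the $n$-th coordinates over $\mathcal{A}$ equals $a_n$ ``by the construction.''
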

\begin{proof}
Let $\mathbf{a}=(a_1, a_2, \ldots) \in \mathbb{O}(P)$ be non-zero. Each coordinate of $\mathbf{a}$ can be expressed as a finite union of principal down-sets from its canonical decomposition. Thus $a_n = \bigcup_{i=0} ^{m_n} \dol x_n ^{(i)}$ for every $n \in \N$ where $\{\dol x_n ^{(i)}:i<m_n \}$ is the canonical decomposition of $a_n$. 
For every $\dol x_1^{(i)}$, using Lemma \ref{HelpingLemma1}, we can fix a principal element $\mathbf{b}_1^{(i)}$ in $\mathbb{O}(P)$, such that $\dol x_1^{(i)}$ is its first coordinate and $\mathbf{b}_1^{(i)} \leq \mathbf{a}$. Let $A_1=\{\mathbf{b}_1^{(i)}:i\leq m_1\}$. We already know that supremum of two elements from $\mathbb{O}(P)$ is the sequence of suprema, and therefore the first coordinate of $\sup(A_1)$ is $a_1$.  
For every $\dol x_2^{(i)}$ we can find a principal element $\mathbf{b}_2 ^{(i)}$ in $\mathbb{O}(P)$ such that $x_2^{(i)}$ is its second coordinate and $\mathbf{b}_2 ^{(i)} \leq \mathbf{a}$. Let $A_2=\{\mathbf{b}_2^{(i)}:i\leq m_2\}$. Then the second coordinate of $\sup(A_2)$ is $a_2$ Proceeding inductively we obtain $A_n$'s. Now we define $\mathcal{A}$ as $\bigcup_{n \in \N}  A_n$. Note that as every $A_n$ is finite, $\mathcal{A}$ is countable. 

Now we show that $\sup(\mathcal{A})=\mathbf{a}$. By the definition of $\mathcal{A}$, $\mathbf{x} \leq \mathbf{a}$ for all $\mathbf{x} \in \mathcal{A}$ so $\mathbf{a}$ is an upper bound of $\mathcal{A}$. Take any other upper bound $s=(s_1, s_2, \ldots)$ of $\mathcal{A}$. Fix $n\in\N$. Then $y_n\subset s_n$ for every $\mathbf{y}\in\mathcal{A}$. By the construction $\bigcup\{y_n:\mathbf{y}\in\mathcal{A}\}$ equals $a_n$.
Thus $\mathbf{a} \leq s$ and $\sup(\mathcal{A})=\mathbf{a}$. \end{proof}

Now we will show that $\mathbb{O}(P)$ is closed on taking supremum of countably many principal elements.
\begin{lem}\label{CountableSupremumLemma}
Let $\mathcal{A}:=\{(\dol x_1 ^{(i)}, \dol x_2 ^{(i)}, \ldots) \in \mathbb{O}(P):i \in \w \}$. Then $$\sup(\mathcal{A})=\left(\bigcup_{i \in \w} \dol x_1 ^{(i)}, \bigcup_{i \in \w} \dol x_2 ^{(i)}, \ldots\right).$$
\end{lem}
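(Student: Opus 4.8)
The plan is to verify that the coordinatewise union
$\mathbf{z}:=\left(\bigcup_{i \in \w} \dol x_1 ^{(i)}, \bigcup_{i \in \w} \dol x_2 ^{(i)}, \ldots\right)$
is a well-defined element of $\mathbb{O}(P)$ and then check directly that it is the least upper bound of $\mathcal{A}$. First I would observe that, although the union in each coordinate ranges over $\w$, each $\O(P_n)$ is finite, so for every fixed $n$ only finitely many distinct down-sets occur among $\{\dol x_n^{(i)} : i \in \w\}$. Hence $z_n := \bigcup_{i \in \w} \dol x_n^{(i)}$ is really a finite union of down-sets, and is therefore again a down-set, i.e. $z_n \in \O(P_n)$.

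The main step is to show the coherence condition $\hat{p}_n^{n+1}(z_{n+1}) = z_n$, which is exactly what guarantees $\mathbf{z} \in \mathbb{O}(P)$. Here I would fix, for each $n$, finitely many indices $i_1, \dots, i_k$ realizing all distinct values of $\dol x_{n+1}^{(i)}$, so that $z_{n+1} = \bigcup_{j=1}^k \dol x_{n+1}^{(i_j)}$. Since $\hat{p}_n^{n+1}$ preserves set unions by Theorem \ref{Posets_and_O(P)}, and since each $(\dol x_1^{(i)}, \dol x_2^{(i)}, \ldots)$ lies in $\mathbb{O}(P)$ (so that $\hat{p}_n^{n+1}(\dol x_{n+1}^{(i)}) = \dol x_n^{(i)}$), applying $\hat{p}_n^{n+1}$ to this finite union yields $\bigcup_{j=1}^k \dol x_n^{(i_j)}$.

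It then remains to identify $\bigcup_{j=1}^k \dol x_n^{(i_j)}$ with $z_n$. The inclusion $\subseteq$ is immediate; for $\supseteq$ I would use that if $\dol x_{n+1}^{(i)} = \dol x_{n+1}^{(i_j)}$ for some $j$, then applying $\hat{p}_n^{n+1}$ to both sides forces $\dol x_n^{(i)} = \dol x_n^{(i_j)}$. Thus the finitely many representatives already capture every $\dol x_n^{(i)}$, giving $\bigcup_{j=1}^k \dol x_n^{(i_j)} = z_n$ and hence $\hat{p}_n^{n+1}(z_{n+1}) = z_n$. This is the only genuine obstacle: it is precisely where the finiteness of the $\O(P_n)$ and the union-preservation of the induced maps interact, and the identification of the image of the $(n+1)$-st coordinate with the $n$-th coordinate must be done with care rather than by blindly commuting $\hat{p}_n^{n+1}$ past an infinite union.

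Finally I would check the supremum property, which is routine because the order on $\mathbb{O}(P)$ is coordinatewise inclusion. Each $(\dol x_1^{(i)}, \dol x_2^{(i)}, \ldots) \leq \mathbf{z}$ since $\dol x_n^{(i)} \subseteq z_n$ for all $n$, so $\mathbf{z}$ is an upper bound of $\mathcal{A}$. Conversely, if $\mathbf{s} = (s_1, s_2, \ldots)$ is any upper bound of $\mathcal{A}$, then $\dol x_n^{(i)} \subseteq s_n$ for all $i$ and $n$, whence $z_n = \bigcup_{i \in \w} \dol x_n^{(i)} \subseteq s_n$ for every $n$, and therefore $\mathbf{z} \leq \mathbf{s}$. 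This shows $\mathbf{z} = \sup(\mathcal{A})$, as claimed.
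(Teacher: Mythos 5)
Your proposal is correct and follows essentially the same route as the paper's proof: both use the finiteness of $\O(P_n)$ to replace each coordinatewise infinite union by a finite one, push that finite union through the union-preserving map $\hat{p}_n^{n+1}$ (Theorem \ref{Posets_and_O(P)}) together with the coherence of each element of $\mathcal{A}$ to verify membership in $\mathbb{O}(P)$, and then make the routine coordinatewise least-upper-bound check. The only cosmetic difference is in how the truncation is matched across levels — the paper fixes increasing cut-off indices $m_n \leq m_{n+1}$, while you pick representatives of the distinct down-sets at level $n+1$ and use coherence to see they capture every $\dol x_n^{(i)}$ — but this is the same idea.
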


\begin{proof}
We begin by proving that $(\bigcup_{i \in \w} \dol x_1 ^{(i)}, \bigcup_{i \in \w} \dol x_2 ^{(i)}, \ldots) \in \mathbb{O}(P)$. Since $\O(P_n)$ is finite, there is $m_n$ with $\bigcup_{i<m_n}  \dol x_n ^{(i)}=\bigcup_{i \in \w} \dol x_n ^{(i)}$. We may assume $m_{n} \leq m_{n+1}$ for all $n \in \N$. 
Then 
\[\hat{p}_n ^{n+1}\left(\bigcup_{i \in \w} \dol x_{n+1} ^{(i)}
\right)=\hat{p}_n ^{n+1}\left(\bigcup_{i<m_{n+1}}  \dol x_{n+1} 
^{(i)}\right)=\bigcup_{i<m_{n+1}}  \hat{p}_n ^{n+1} \left( \dol 
x_{n+1} ^{(i)}\right)=\bigcup_{i<m_{n+1}} \dol x_{n} 
^{(i)}=\bigcup_{i \in \w} \dol x_n ^{(i)}.\]  So $
\left(\bigcup_{i \in \w} \dol x_1 ^{(i)}, \bigcup_{i \in 
\w}  \dol x_2 ^{(i)}, \ldots\right) \in \mathbb{O}(P)$.

Clearly $\left(\bigcup_{i \in \w}  \dol x_1 ^{(i)}, \bigcup_{i \in \w} \dol x_2 ^{(i)}, \ldots\right) $ is an upper bound of $\mathcal{A}$ so we only need to show that it is the least upper bound. Take any upper bound $s=(s_1, s_2, \ldots) \in \mathbb{O}(P)$ of $\mathcal{A}$. Then $\dol x_n ^{(i)} \subset s_n$ for all $n \in \N, i \in \w$. Thus $\bigcup_{i \in \w}  \dol x_n ^{(i)} \subset s_n$ for all $n \in \N$. So $\left(\bigcup_{i \in \w} \dol x_1 ^{(i)}, \bigcup_{i \in \w}  \dol x_2 ^{(i)}, \ldots\right) \leq (s_1, s_2, \ldots)$, and finally  $\sup(\mathcal{A})=\left(\bigcup_{i \in \w}  \dol x_1 ^{(i)}, \bigcup_{i \in \w}  \dol x_2 ^{(i)}, \ldots\right)$.
\end{proof}
The following is an immediate consequence of Lemma \ref{LatticeElementDecompositionLemma} and Lemma \ref{CountableSupremumLemma}. 
\begin{cor}
For any non-zero element $\mathbf{a} \in \mathbb{O}(P)$ there are $(x_1^{(i)},x_2^{(i)},\dots)$ in $P$, $i\in\w$, such that $\mathbf{a}=(\bigcup_{i \in \w} \dol x_1 ^{(i)}, \bigcup_{i \in \w} \dol x_2 ^{(i)}, \ldots)$.
\end{cor}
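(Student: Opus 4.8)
The plan is to read off the statement directly from the two preceding lemmas, which together already supply everything needed. First I would apply Lemma \ref{LatticeElementDecompositionLemma} to the given non-zero element $\mathbf{a} \in \mathbb{O}(P)$. This yields a countable family $\mathcal{A}$ consisting of principal elements with $\sup(\mathcal{A}) = \mathbf{a}$. By the definition of principal element, every member of $\mathcal{A}$ has the form $(\dol x_1, \dol x_2, \ldots)$ for some $(x_1, x_2, \ldots) \in P$, so I may enumerate the family as
\[
\mathcal{A} = \left\{ (\dol x_1 ^{(i)}, \dol x_2 ^{(i)}, \ldots) : i \in \w \right\},
\]
where each $(x_1 ^{(i)}, x_2 ^{(i)}, \ldots)$ is an element of $P$.

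Second, I would feed this $\w$-indexed enumeration into Lemma \ref{CountableSupremumLemma}, which computes the supremum of such a family coordinatewise by unions:
\[
\sup(\mathcal{A}) = \left(\bigcup_{i \in \w} \dol x_1 ^{(i)}, \bigcup_{i \in \w} \dol x_2 ^{(i)}, \ldots\right).
\]
Combining the two identities gives $\mathbf{a} = \sup(\mathcal{A}) = (\bigcup_{i \in \w} \dol x_1 ^{(i)}, \bigcup_{i \in \w} \dol x_2 ^{(i)}, \ldots)$, which is exactly the asserted representation.

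There is essentially no obstacle here, since the corollary is a direct splicing of the two lemmas. The only point requiring a word of care is a purely bookkeeping one: Lemma \ref{LatticeElementDecompositionLemma} produces a family that is merely \emph{countable}, whereas Lemma \ref{CountableSupremumLemma} is phrased for a family indexed by all of $\w$. If the family $\mathcal{A}$ happens to be finite, I would simply pad the enumeration by repeating one of its elements for the remaining indices; repetitions do not affect the supremum nor the coordinatewise unions, so the formula is unchanged. With this trivial adjustment the two lemmas match up verbatim and the proof is complete.
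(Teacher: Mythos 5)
Your proof is correct and follows exactly the route the paper intends: the paper states this corollary as an immediate consequence of Lemma \ref{LatticeElementDecompositionLemma} and Lemma \ref{CountableSupremumLemma}, which is precisely your splicing of the two. Your extra remark about padding a finite family with repetitions is a reasonable bit of care that the paper leaves implicit.
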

We say that lattice $L$ is $\sigma$-complete, if for any countable set $A \subset L$, both $\sup(A)$ and $\inf(A)$ exist. Now, we are ready to formulate and prove the next property of $\mathbb{O}(P)$. 
\begin{thm}
$\mathbb{O}(P)$ is $\sigma$-complete.
\end{thm}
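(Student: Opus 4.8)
The plan is to prove the two halves of $\sigma$-completeness separately: every countable $A=\{\mathbf{a}^{(i)}:i\in\w\}\subseteq\mathbb{O}(P)$, with $\mathbf{a}^{(i)}=(a_1^{(i)},a_2^{(i)},\ldots)$, has a supremum, and it has an infimum. For the supremum I would reduce to the principal case already settled in Lemma \ref{CountableSupremumLemma}. Discarding the zero element (which does not affect a supremum), Lemma \ref{LatticeElementDecompositionLemma} supplies for each non-zero $\mathbf{a}^{(i)}$ a countable family $\mathcal{A}_i$ of principal elements with $\sup(\mathcal{A}_i)=\mathbf{a}^{(i)}$. Then $\mathcal{B}:=\bigcup_{i\in\w}\mathcal{A}_i$ is again a countable family of principal elements, so Lemma \ref{CountableSupremumLemma} gives that $\sup(\mathcal{B})$ exists and is computed coordinatewise. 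Applying Lemma \ref{LemmaOnSupremumOfUnions} to the family $\{\mathcal{A}_i\}_{i\in\w}$ then yields $\sup(A)=\bigvee_i\sup(\mathcal{A}_i)=\sup(\mathcal{B})$, so $\sup(A)$ exists.

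For the infimum I would repeat, almost verbatim, the level-by-level construction already used to produce meets in the proof that $\mathbb{O}(P)$ is a lattice, the only new point being that nothing changes if the two-element family there is replaced by the countable family $A$. The set of lower bounds of $A$ is non-empty, since $(\emptyset,\emptyset,\ldots)$ is one, and it is closed under finite suprema: if $\mathbf{s},\mathbf{t}$ are lower bounds then $s_n\cup t_n\subseteq a_n^{(i)}$ for every $i$, so $\mathbf{s}\vee\mathbf{t}$ is again a lower bound. Hence I would build $\mathbf{r}=(r_1,r_2,\ldots)$ by letting $r_n$ be the largest $n$-th coordinate occurring among lower bounds whose first $n-1$ coordinates are $r_1,\ldots,r_{n-1}$; the relevant sets are non-empty, finite (being subsets of the finite $\O(P_n)$) and closed under union, so each maximum exists and is realised by an actual lower bound. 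By construction $\hat{p}_n^{n+1}(r_{n+1})=r_n$, whence $\mathbf{r}\in\mathbb{O}(P)$, and since each $r_n$ is a coordinate of a genuine lower bound we get $r_n\subseteq a_n^{(i)}$ for all $i$, so $\mathbf{r}$ is itself a lower bound of $A$.

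It then remains to check that $\mathbf{r}$ is the \emph{greatest} lower bound, and this is the step that exploits the maximality built into the construction. Given any lower bound $\mathbf{s}$, the join $\mathbf{s}\vee\mathbf{r}$ is again a lower bound, and a straightforward induction shows $s_n\cup r_n=r_n$ for every $n$: at stage $n$ the coordinate $s_n\cup r_n$ lies in the very set whose maximum defines $r_n$ and dominates $r_n$, forcing equality. Thus $s_n\subseteq r_n$ for all $n$, i.e. $\mathbf{s}\leq\mathbf{r}$, so $\inf(A)=\mathbf{r}$.

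The main obstacle is this infimum half. Unlike the supremum, there is no coordinatewise formula available, because the induced maps fail to preserve meets and meets in $\mathbb{O}(P)$ are \emph{not} given by intersections (cf. Example \ref{ExampleNotPreservingMeets}); consequently the greatest lower bound must be assembled one level at a time and its maximality verified through the inductive $\mathbf{s}\vee\mathbf{r}$ argument rather than read off directly. I would note in passing that the construction in fact works for arbitrary, not merely countable, families, so $\mathbb{O}(P)$ is even a complete lattice, but for the present statement the countable case is all that is required.
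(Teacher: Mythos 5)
Your proof is correct, and while the supremum half coincides with the paper's, the infimum half takes a genuinely different route. For suprema you do exactly what the paper does: decompose each non-zero element into a countable family of principal elements via Lemma \ref{LatticeElementDecompositionLemma}, take the countable union, and combine Lemma \ref{CountableSupremumLemma} with Lemma \ref{LemmaOnSupremumOfUnions}. For infima, however, the paper does \emph{not} redo the level-by-level construction; it reuses the binary meets it already has, forming the decreasing sequence $x_n=(a_1 ^{(0)}, a_2 ^{(0)}, \dots) \wedge \dots \wedge (a_1 ^{(n)}, a_2 ^{(n)}, \dots)$, observing that each coordinate of $(x_n)_{n\in\w}$ is a $\subseteq$-decreasing sequence in the finite poset $\O(P_k)$ and hence eventually constant, and showing that the sequence $(r_1,r_2,\dots)$ of these stabilized values is the greatest lower bound. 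You instead re-run, for a countable family, the greedy maximal-coordinate construction from the proof that $\mathbb{O}(P)$ is a lattice: lower bounds are closed under finite joins (computed coordinatewise as unions), so at each level you may take the largest coordinate realized by a lower bound compatible with the previous choices, and maximality forces $s_n\cup r_n=r_n$ for every lower bound $\mathbf{s}$. Both arguments are sound and both rest on finiteness of the $\O(P_n)$; the paper's version is shorter because it treats binary meets as a black box, while yours buys more: since nothing in your construction uses countability (the relevant sets are subsets of the finite $\O(P_n)$ no matter how large the family is), you get arbitrary infima, and together with the top element $(P_1,P_2,\dots)$ this shows $\mathbb{O}(P)$ is in fact a complete lattice --- a strengthening that the paper's stabilization argument, which genuinely needs the family enumerated as a sequence, does not directly yield.
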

\begin{proof}
	Let $\mathcal{A}:=\{(a_1 ^{(i)}, a_2 ^{(i)}, \dots), i \in \w\}$. From Lemma \ref{LatticeElementDecompositionLemma} and Lemma \ref{CountableSupremumLemma} we get that $\bigvee \mathcal{A} =\bigvee_{i \in \w} \sup(\mathcal{A}_i)$, where $\sup(\mathcal{A}_i)=(a_1 ^{(i)}, a_2 ^{(i)}, \dots)$ and $\mathcal{A}_i$ is a countable family consisting of principal elements. Now from Lemma \ref{LemmaOnSupremumOfUnions} we get that $\bigvee_{i \in \w} \sup(\mathcal{A}_i)= \sup(\bigcup_{i \in \w} \mathcal{A}_i)$, because $\sup(\bigcup_{i \in \w} \mathcal{A}_i)$ and $\sup(\mathcal{A}_i)$ exist as they are countable sets consisting of principal elements. 
	
	Now we are going to show that $\bigwedge \mathcal{A} \in \mathbb{O}(P)$.
	Consider the sequence $x_n=(a_1 ^{(0)}, a_2 ^{(0)}, \dots) \wedge (a_1 ^{(1)}, a_2 ^{(1)}, \dots) \wedge \dots \wedge (a_1 ^{(n)}, a_2 ^{(n)}, \dots)$. We can find $\ell_0$ such that for every $n \geq \ell_0$ the first coordinate of $x_n$ is constant. We denote this constant coordinate by $r_1$. Similarly we can find $\ell_1 \geq \ell_0$ such that for every $n \geq \ell_1$ the first coordinate of $x_n$ is $r_1$ and the second one is constant. We denote this constant second coordinate by $r_2$. Proceeding inductively we get a sequence $(r_1, r_2, \dots)$. 
	
	Now we will show that  $\bigwedge \mathcal{A}=(r_1, r_2, \dots)$. See that for every $n \in \N$, $\hat{p}_n ^{n+1}(r_{n+1})=r_n$ as $x_{\ell_{n}} \in \mathbb{O}(P)$. Thus $(r_1, r_2, \dots) \in \mathbb{O}(P)$. Note that for every $n \in \w$ we have $(r_1,r_2, \dots) \leq x_n$, which implies $(r_1,r_2, \dots) \leq (a_1 ^{(n)}, a_2 ^{(n)}, \dots)$. In other words, $(r_1, r_2, \dots)$ is a lower bound of $\mathcal{A}$. Let $(s_1, s_2, \dots)$ be a lower bound of $\mathcal{A}$. Then $(s_1, s_2, \dots) \leq x_n$ for every $n \in \w$. By the definition of $r_i$'s we obtain $s_i \subset r_i$, thus $(s_1, s_2, \ldots)\leq(r_1, r_2, \ldots)$.  
	Thus $\bigwedge \mathcal{A}=(r_1, r_2, \dots)$.
\end{proof}
Finally, we are able to prove that $\mathbb{O(P)}$ is universal with respect to induced quotient mappings, i.e. every inverse limit of order ideals $\mathbb{O}(H)=\varprojlim \langle \{\O(H_n)\}_{n \in \N}, \{\hat{h}^n_k\}_{k<n} \rangle$ is a continuous quotient of $\mathbb{O(P)}$.
\begin{thm}\label{InducedQuotientMapping}
Let $H= \varprojlim \langle  \{H_n\}_{n \in \N},\{h^n_k \}_{k < n} \rangle$ and $\mathbb{P}= \varprojlim \langle  \{P_n\}_{n \in \N},\{p^n_k \}_{k < n} \rangle$, where $(P_n)$ is a \text{\Fraisse} sequence and let $f:\mathbb{P}\to H$ be a continuous quotient map.
Then there is a continuous quotient map $q:\mathbb{O(P)} \rightarrow \mathbb{O(\text{H})}$ which is join-preserving. Furthermore the diagram
\begin{equation} \label{LastDiagram}
\begin{tikzcd}
H \arrow[d, "\psi"]  & \mathbb{P} \arrow[l, "f"']  \arrow[d, "\phi"] \\
\mathbb{O}(H)					& \mathbb{O(P)}  \arrow[l,"q"]  \\
\end{tikzcd}
\end{equation}
commutes.
\end{thm}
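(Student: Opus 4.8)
The plan is to reduce everything to the finite-level induced maps and then reuse the inverse-limit machinery already developed for Theorem \ref{UniversalPosetMapping}. First I would apply Theorem \ref{RepresentationOfContinuousQuotientMapping} to the given continuous quotient $f:\mathbb{P}\to H$: this yields a strictly increasing sequence $(i_n)_{n\in\N}$ and quotient maps $g_n:P_{i_n}\to H_n$ with $f(x_1,x_2,\dots)=(g_1(x_{i_1}),g_2(x_{i_2}),\dots)$ such that $g_k\circ p_{i_k}^{i_n}=h_k^n\circ g_n$ for all $k<n$. For each $n$ let $\hat g_n:\O(P_{i_n})\to\O(H_n)$ be the quotient map induced by $g_n$ as in Theorem \ref{Posets_and_O(P)}; recall that each $\hat g_n$ is join-preserving and satisfies $\hat g_n(\dol x)=\dol g_n(x)$. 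Applying the induced-map operation to the commuting finite squares and invoking Lemma \ref{InducedMappingsComposition} (so that induced maps compose and $\hat p_{i_k}^{i_n}$ is the map induced by $p_{i_k}^{i_n}$), I obtain the commuting squares
\[
\hat g_k\circ\hat p_{i_k}^{i_n}=\hat h_k^n\circ\hat g_n\qquad(k<n),
\]
that is, $\{\hat g_n\}$ is a morphism from $\langle\{\O(P_n)\},\{\hat p^n_k\}\rangle$, taken along the cofinal indices $i_n$, to $\langle\{\O(H_n)\},\{\hat h^n_k\}\rangle$.

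Next I would define $q:\mathbb{O}(\mathbb{P})\to\mathbb{O}(H)$ by $q(X_1,X_2,\dots)=(\hat g_1(X_{i_1}),\hat g_2(X_{i_2}),\dots)$, exactly mimicking the representation of $f$. Well-definedness (that the image lies in $\mathbb{O}(H)$) follows from the commuting squares above: for $(X_n)\in\mathbb{O}(\mathbb{P})$ one has $\hat p_{i_n}^{i_{n+1}}(X_{i_{n+1}})=X_{i_n}$, hence $\hat h_n^{n+1}(\hat g_{n+1}(X_{i_{n+1}}))=\hat g_n(\hat p_{i_n}^{i_{n+1}}(X_{i_{n+1}}))=\hat g_n(X_{i_n})$. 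Continuity is the routine preimage-of-a-basic-set computation, carried out as at the end of the proof of Theorem \ref{UniversalPosetMapping}.

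For the claim that $q$ is a quotient map I would reuse the two K\"onig-type arguments from the proof of Theorem \ref{UniversalPosetMapping} verbatim, with $\hat p,\hat h,\hat g$ in place of $p,h,f$ and Lemma \ref{FiniteSequencesLemma} as the engine. For surjectivity: given $(Z_n)\in\mathbb{O}(H)$, the fibers $\hat g_n^{-1}[\{Z_n\}]$ are finite, non-empty and compatible under the bonding maps, so Lemma \ref{FiniteSequencesLemma} produces a compatible thread $(X_{i_n})$, which is then filled out to an element of $\mathbb{O}(\mathbb{P})$ via $X'_n:=\hat p_n^{i_n}(X_{i_n})$. For strictness: given $(U_n)\leq(V_n)$ in $\mathbb{O}(H)$, the sets $A_n=\{(X,Y)\in\O(P_{i_n})^2:X\subset Y,\ \hat g_n(X)=U_n,\ \hat g_n(Y)=V_n\}$ are non-empty (since each $\hat g_n$ is a quotient) and compatible, and Lemma \ref{FiniteSequencesLemma} again yields a thread giving a comparable pair in $\mathbb{O}(\mathbb{P})$ mapping onto the given one. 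Note that, unlike in Theorem \ref{UniversalPosetMapping}, I do not need any extension property (E) here, since the maps $\hat g_n$ are already in hand; only the verification machinery is borrowed.

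Finally, join-preservation is immediate: binary, hence finite, suprema in both $\mathbb{O}(\mathbb{P})$ and $\mathbb{O}(H)$ are coordinatewise unions, and each $\hat g_n$ preserves unions, so $q$ preserves joins. Commutativity of diagram \eqref{LastDiagram} is a one-line coordinate check: for $x=(x_1,x_2,\dots)\in\mathbb{P}$,
\[
q(\phi(x))=(\hat g_1(\dol x_{i_1}),\hat g_2(\dol x_{i_2}),\dots)=(\dol g_1(x_{i_1}),\dol g_2(x_{i_2}),\dots)=\psi(f(x)),
\]
using $\hat g_n(\dol x)=\dol g_n(x)$. The main obstacle is organizational rather than conceptual: one must handle the cofinal reindexing by $(i_n)$ cleanly so that the fibers and compatibility conditions line up for Lemma \ref{FiniteSequencesLemma}; once the finite-level squares are fixed, the surjectivity and strictness arguments are the delicate parts, and they are exactly the ones already executed in Theorem \ref{UniversalPosetMapping}.
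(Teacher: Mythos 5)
Your proposal is correct, but it constructs $q$ by a genuinely different route than the paper does. The paper never forms the induced maps $\hat g_n$ of the finite-level representation of $f$: instead it defines $q$ first on principal elements by $q(\dol x_1,\dol x_2,\dots):=\psi(f(x_1,x_2,\dots))$, extends it to an arbitrary $\mathbf{a}$ by writing $\mathbf{a}$ as a countable supremum of principal elements (Lemma \ref{LatticeElementDecompositionLemma}) and taking the supremum of their images (evaluated through Lemma \ref{CountableSupremumLemma}), and then must separately verify that the value does not depend on the chosen family (via Remark \ref{ValueOfIQMDoesntDependOnDecomposition}); surjectivity is then proved by lifting principal families through $\psi\circ f\circ\phi^{-1}$, and strictness by a fairly involved argument on pairs of principal down-sets (the auxiliary families $\mathcal{C}$, $\mathcal{D}$, $\mathcal{E}$) fed into Lemma \ref{FiniteSequencesLemma}. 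Your coordinatewise definition $q(X_1,X_2,\dots)=(\hat g_1(X_{i_1}),\hat g_2(X_{i_2}),\dots)$, legitimized by applying the hat operation to the commuting squares of Theorem \ref{RepresentationOfContinuousQuotientMapping} via Lemma \ref{InducedMappingsComposition}, produces the very same map---each coordinate $a_{i_n}$ is a finite union of the principal down-sets of any chosen decomposition and $\hat g_n$ preserves unions---but your packaging buys real simplifications: well-definedness collapses to a one-line thread-compatibility check, join-preservation is immediate from the coordinatewise formula for suprema, and the surjectivity and strictness arguments of Theorem \ref{UniversalPosetMapping} transfer essentially verbatim at the level of whole down-sets, since the fibers $\hat g_n^{-1}[\{Z_n\}]$ and the pair-sets $A_n$ are finite, non-empty and compatible under the hatted bonding maps. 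What the paper's formulation buys instead is explicit control over how $q$ acts on principal elements and countable suprema, which is the currency of the surrounding section (atomicity, $\sigma$-completeness, Lemma \ref{CountableSupremumLemma}). Your closing remark is also accurate: the extension property (E) was needed in Theorem \ref{UniversalPosetMapping} to manufacture the finite-level maps, whereas here Theorem \ref{RepresentationOfContinuousQuotientMapping} hands them to you, so only the verification machinery is borrowed.
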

\begin{proof}

The quotient map $f$ has the following representation (see Theorem \ref{RepresentationOfContinuousQuotientMapping}): there are a sequence $i_1<i_2<\dots$ and quotient mappings $f_k: P_{i_k}\to H_k$ such $f(x_1,x_2,\dots)=(f_1(x_{i_1}), f_2(x_{i_2}),\dots)$ and diagram \eqref{RepresentationDiagram} commutes for every $n, k \in\N$ with $k<n$.

Firstly we define $q$ for principal elements as follows: $q(\dol x_1, \dol x_2, \ldots):=\psi(f( x_1,  x_2, \ldots))=(\dol f_1(x_{i_1}), \dol f_2(x_{i_2}), \ldots)$.
Also we put $q(\emptyset, \emptyset, \ldots)=(\emptyset, \emptyset, \ldots)$. Finally for arbitrary $\mathbf{a}=(a_1, a_2, \ldots)$ using Lemma \ref{LatticeElementDecompositionLemma} we find a countable family $\mathcal{A}=\{(\dol x_1 ^{(t)},\dol x_2 ^{(t)}, \ldots ):t\in\w \}$ such that  $\mathbf{a}=\sup\mathcal{A}$ and we put \[q(\mathbf{a}):=\sup\{q(\dol x_1 ^{(t)},\dol x_2 ^{(t)}, \ldots):t\in\w \}=\sup\{(\dol f_1(x_{i_1}^{(t)}), \dol f_2(x_{i_2} ^{(t)}), \ldots):t\in\w \}.\]

Using Lemma \ref{CountableSupremumLemma} we get that in fact
$$q(\mathbf{a})=\left(\bigcup_{t \in \w} \dol f_1(x_{i_1} ^{(t)}), \bigcup_{t \in \w} \dol f_2(x_{i_2} ^{(t)}), \dots \right)$$
Clearly diagram \eqref{LastDiagram} commutes.
Now we will prove that the value of $q$ does not depend on the choice of family $\mathcal{A}$ for $\mathbf{a}$. Assume that  \[\mathbf{a}=(a_1,a_2,\dots)=\sup\{(\dol x_1 ^{(t)},\dol x_2 ^{(t)}, \ldots):t\in\w \}=\sup\{(\dol y_1 ^{(t)},\dol y_2 ^{(t)}, \ldots):t\in\w \}.\] We need to show that $$\sup \{(\dol f_1(x_{i_1} ^{(t)}),\dol f_2(x_{i_2} ^{(t)}), \ldots):t\in\w \}=\sup \{(\dol f_1(y_{i_1} ^{(t)}),\dol f_2(y_{i_2} ^{(t)}), \ldots):t\in\w \}$$ which is, by  Lemma \ref{CountableSupremumLemma} equivalent to 
$$
\bigcup_{t \in \w} \dol f_l(x_{i_l} ^{(t)})=\bigcup_{t \in \w} \dol f_l(y_{i_l} ^{(t)}) \text{ for all } l \in \N.
$$

Take any $l \in \N$. Again by Lemma we obtain $\bigcup_{t \in \w} \dol x_{i_l} ^{(t)}=\bigcup_{t \in \w}  \dol y_{i_l} ^{(t)}$. Since $H_{i_l}$ is finite, we find $m_l,n_l \in \mathbb{N}$ with 
$$\bigcup_{t \in \w} \dol x_{i_l} ^{(t)}=\bigcup_{t<m_l} \dol x_{i_l} ^{(t)}\text{ and }\bigcup_{t<n_l}  \dol y_{i_l} ^{(t)}=\bigcup_{t \in \w}  \dol y_{i_l} ^{(t)} $$
From Remark \ref{ValueOfIQMDoesntDependOnDecomposition} we get that $\bigcup_{t<m_l} \dol f_l(x_{i_l}^{(t)})=\bigcup_{t<n_l} \dol f_l(y_{i_l}^{(t)})$. Note that 
$$
\bigcup_{t \in \w} \dol f_l(x_{i_l} ^{(t)})=\bigcup_{t <m_l} \dol f_l(x_{i_l} ^{(t)})=\bigcup_{t <n_l} \dol f_l(y_{i_l} ^{(t)})=\bigcup_{t \in \w} \dol f_l(y_{i_l} ^{(t)}).
$$

Thus $q$ is well defined.

Now we will show that $q$ preserves suprema. Let $\mathbf{a}=(a_1, a_2, \ldots),\mathbf{b}=(b_1, b_2, \ldots)$.
We know that both of these elements are countable supremum of principal elements $( \dol x_1 ^{(t)}, \dol x_2 ^{(t)}, \dots)$ and $( \dol y_1 ^{(t)}, \dol y_2 ^{(t)}, \dots)$, respectively ($t\in\w$). As value of $q$ does not depend on a chosen decomposition of $\sup(\{\mathbf{a},\mathbf{b}\})$, we have that
$$q(\sup(\{\mathbf{a},\mathbf{b}\}))=q\left(\bigcup_{t \in \w} \dol x_{1} ^{(t)} \cup \bigcup_{t \in \w} \dol y_{1} ^{(t)}, \bigcup_{t \in \w} \dol x_{2} ^{(t)} \cup \bigcup_{t \in \w} \dol y_{2} ^{(t)}, \ldots \right)=
$$
$$\left(\bigcup_{t \in \w} \dol f_1(x_{i_1} ^{(t)}) \cup \bigcup_{t \in \w} \dol f_1(y_{i_1} ^{(t)}), \bigcup_{t \in \w} \dol f_2(x_{i_2} ^{(t)}) \cup \bigcup_{t \in \w} \dol f_2(y_{i_2} ^{(t)}), \ldots \right)=\sup(\{q(\mathbf{a}), q(\mathbf{b})\})$$
Thus $q$ preserves suprema. And consequently, it  preserves ordering as well.  Also $q$ is onto $\mathbb{O}(H)$. Indeed, take any $\mathbf{g} \in \mathbb{O}(H)$.
Then let $\mathcal{G}:=\{(\dol g_1 ^{(t)}, \dol g_2 ^{(t)}, \dots):t \in \w \}$ be a family of principal elements in $\mathbb{O}(H)$ such that $\sup(\mathcal{G})=\mathbf{g}$. Observe that each element $\mathbf{x}$ of family $\mathcal{G}$ can be expressed as a value $\mathbf{x}=q(\mathbf{a})=\psi(f(\phi^{-1}(\mathbf{a})))$ of some principal element $\mathbf{a}$ from $\mathbb{O(P)}$ as:
\begin{enumerate}
\item[(i)]$\psi$ is an isomorphism from $H$ to the set of all principal elements in $\mathbb{O}(H)$.
\item[(ii)]$f$ is a quotient map from $\mathbb{P}$ to $H$.
\item[(iii)]$\phi ^{-1}$ is an isomorphism from the set of all principal elements in $\mathbb{O(P)}$ to $\mathbb{P}$.
\end{enumerate}
Isomorphism is onto mapping and the composition of a onto mappings is an onto mapping. Thus $q$ is a mapping from the set of all principal elements in $\mathbb{O(P)}$ onto the set of all principal elements in $\mathbb{O}(H)$.
For each element $(\dol g_1 ^{(t)}, \dol g_2 ^{(t)}, \dots)$ in $\mathcal{G}$ we can find an  element $(\dol w_1 ^{(t)}, \dol w_2 ^{(t)}, \dots)$ such that $q(\dol w_1 ^{(t)}, \dol w_2 ^{(t)}, \dots)=(\dol g_1 ^{(t)}, \dol g_2 ^{(t)}, \dots)$. Let $\mathcal{W}:=\{(\dol w_1 ^{(t)}, \dol w_2 ^{(t)}, \dots) \in \mathbb{O(P)}:t \in \w \}$. Clearly $q(\sup(\mathcal{W}))=\mathbf{g}$.

Now we must prove that $q$ is a quotient map for posets. Take $\mathbf{a}, \mathbf{b} \in \mathbb{O}(H)$ such that $\mathbf{a} \leq \mathbf{b}$. Let $\mathcal{A},\mathcal{B}$ be families of principal elements such that $\mathbf{a}=\sup(\mathcal{A})$ and $\mathbf{b}=\sup(\mathcal{B})$. 

We will show that for any element $(\dol a_1 ^{(t)}, \dol a_2 ^{(t)}, \ldots)$ of $\mathcal{A}$ we will find $( \dol b_1 ^{(m_1)}, \dol b_2 ^{(m_2)}, \ldots)$ such that $(\dol a_1 ^{(t)}, \dol a_1 ^{(t)}, \ldots) \leq (\dol b_1 ^{(m_1)}, \dol b_2 ^{(m_2)}, \ldots)$ and $(\dol b_n^{(m_i)})_{n\in\N}$ are elements of $\mathcal{B}$ for every $i\in\N$.

Take any $t \in \w$. Then $(\dol a_1 ^{(t)}, \dol a_2 ^{(t)}, \ldots) \leq ( \bigcup_{t \in \w}  \dol b_1 ^{(t)},  \bigcup_{t \in \w} \dol b_2 ^{(t)}, \ldots )$. Take any $n \in \N$. Then there is $m_n \in \w$ (depending on $n$) such that $\dol a_n ^{(t)} \subset \dol b_n ^{(m_n)}$. 

This proves that set (recall that $t$ is fixed!) $$G_{n}:=G_n^{(t)}=\{(\dol a_n ^{(t)},\dol b_n ^{(m)}) \in \O(H_n)^2:\dol a_n ^{(t)} \subset \dol b_n ^{(m)}\text{ for some }m\}$$ 
is non-empty for all $n \in \N$. It is also finite as a subset of a finite set.

By $T_{n}$ we denote that set of all elements $((\dol a_1 ^{(t)},\dol b_1 ^{(m_1)}),\ldots, (\dol a_n ^{(t)},\dol b_n ^{(m_n)})) \in G_{1}\times \ldots \times G_{n}$ such that 
\[(\hat{h}_u ^n (\dol a_n ^{(t)}), \hat{h}_u ^n (\dol b_n ^{(m_n)}))=
(\dol a_u ^{(t)},\dol b_u ^{(m_u)}) \text{ for any } u < n. \]
Since $\hat{h}_u ^n$ is a homomorphism, $\dol a_n ^{(t)} \subset \dol b_n ^{(m_n)}$  implies $\dol a_u ^{(t)} \subset \dol b_u ^{(m_u)}$ for every $u < n$. Therefore $T_{n}$ is non-empty.

Using Lemma \ref{FiniteSequencesLemma}, for $q^n_u:G_n\to G_u$ given by $q^n_u(\dol a_n ^{(t)},\dol b_n ^{(m_n)})=(\hat{h}_u ^n (\dol a_n ^{(t)}), \hat{h}_u ^n (\dol b_n ^{(m_n)}))$, we get that there is a sequence $((\dol a_n ^{(t)}, \dol b_n ^{(m_n)})) \in \prod_{n \in \N} G_{n}$ such that $\hat{h}_n ^{n+1} (\dol a_{n+1} ^{(t)})= a_n ^{(t)}$ and $\hat{h}_n ^{n+1}(\dol b_{n+1} ^{(m_{n+1})})=b_n ^{(m_n)}$ for all $n \in \N$.
Then $(\dol b_1^{(m_1)},\dol b_2^{(m_2)},\dots) \leq \mathbf{b}$.
So, by the definition of $G_n^{(t)}$'s, we obtain that for every $t \in \w$ there is $(m_n)\in \w^{\N}$ such that $(\dol a_1 ^{(t)}, \dol a_2 ^{(t)}, \ldots) \leq (\dol b_1 ^{(m_1)}, \dol b_2 ^{(m_2)}, \ldots)$. (We should write $(m_n^{(t)})$, since the sequence $(m_n)$ depends on $t$. But we want our proof to be readable).

As $q$ is a quotient map for principal elements (this follows immediately from the fact that $f$ is quotient), thus we will find $(\dol c_1 ^{(t)}, \dol c_2 ^{(t)}, \ldots)$ and $(\dol d_1 ^{(m_1)}, \dol d_2 ^{(m_2)}, \ldots)$ in $\mathbb{O(P)}$ such that $(\dol c_1 ^{(t)}, \dol c_2 ^{(t)}, \ldots) \leq (\dol d_1 ^{(m_1)}, \dol d_2 ^{(m_2)}, \ldots)$ and $q(\dol c_1 ^{(t)}, \dol c_2 ^{(t)}, \ldots)=(\dol a_1 ^{(t)}, \dol a_2 ^{(t)}, \ldots), q(\dol d_1 ^{(m_1)}, \dol d_2 ^{(m_2)}, \ldots)=(\dol b_1 ^{(m_1)}, \dol b_2 ^{(m_2)}, \ldots)$. Let $\mathcal{C}$ be the set of all $(\dol c_1 ^{(t)}, \dol c_2 ^{(t)}, \ldots)$ and $\mathcal{D}$ be the set of all corresponding $(\dol d_1 ^{(m_1)}, \dol d_2 ^{(m_2)}, \ldots)$.

We know that for each $(\dol b_1 ^{(m)}, b_2 ^{(m)}, \dots) \in \mathcal{B}$ we can find $(\dol e_1^{(m)}, \dol e_2^{(m)}, \dots) \in \mathbb{O(P)}$ such that $q(\dol e_1^{(m)}, \dol e_2^{(m)}, \dots)=(\dol b_1 ^{(m)}, b_2 ^{(m)}, \dots)$. Let $\mathcal{E}$ be the set of all corresponding $(\dol e_1^{(m)}, \dol e_2^{(m)}, \dots)$. Note that $\sup(\mathcal{C}) \leq \sup(\mathcal{D}) \leq \sup(\mathcal{D}) \vee \sup(\mathcal{E})$ and $q(\sup(\mathcal{C}))=\mathbf{a}$. Also $q(\sup(\mathcal{D}) \vee \sup(\mathcal{E}))=q(\sup(\mathcal{D})) \vee q(\sup(\mathcal{E}))=\sup(q[\mathcal{D}]) \vee \sup(q[\mathcal{E}])=\sup(\mathcal{B})=\mathbf{b}$.

At last we show that $q$ is continuous. Take any basic set $U_{(z_1, z_2, \dots, z_n)} \cap \mathbb{O}(H)$ and let $(r_1, r_2, \dots) \in q^{-1}[U_{(z_1, z_2, \dots, z_n)} \cap \mathbb{O}(H)]=q^{-1}[U_{(z_1, z_2, \dots, z_n)}] \cap \mathbb{O(P)}$. Consider the following cases:
\begin{itemize}
    \item[(i)] if $(r_1, r_2, \dots)$ is zero, then immediately from the definitions of $q$ and induced mapping we obtain   
    $$q^{-1}[U_{(z_1, z_2, \dots, z_n)} \cap \mathbb{O}(H)]=q^{-1}[\{(\emptyset, \emptyset, \dots) \}]=\{(\emptyset, \emptyset, \dots) \}.$$ Note that $$\{(\emptyset, \emptyset, \dots) \}=(\{\emptyset \} \times P_2 \times \dots) \cap \mathbb{O(P)}$$  so this set is open in $\mathbb{O(P)}$ and $(r_1, r_2, \dots) \in \{(\emptyset, \emptyset, \dots) \} \subset q^{-1}[U_{(z_1, z_2, \dots, z_n)}] \cap \mathbb{O(P)}$.
    
    \item[(ii)] if $(r_1, r_2, \dots)$ is a non-zero element, then take a family of principal elements $\{(\dol x_1 ^{(t)}, \dol x_2 ^{(t)}, \dots)\}_{t \in \w}$ such that $(r_1, r_2, \dots)=\sup\{(\dol x_1 ^{(t)}, \dol x_2 ^{(t)}, \dots): t \in \w\}$. We will prove that $(\{r_1\} \times \{r_2\} \times \dots \times \{r_{i_n}\} \times P_{i_n +1} \times \dots) \cap \mathbb{O(P)} \subset q^{-1}[U_{(z_1, z_2, \dots, z_n)} \cap \mathbb{O}(H)]$. Let $(l_1, l_2, \dots) \in (\{r_1\} \times \{r_2\} \times \dots \times \{r_{i_n}\} \times P_{i_n +1} \times \dots) \cap \mathbb{O(P)}$ and take a family of principal elements $\{(\dol y_1^{(t)}, \dol y_2^{(t)}, \dots): t \in \w \}$ with $\sup\{(\dol y_1^{(t)}, \dol y_2^{(t)}, \dots): t \in \w \}=(l_1, l_2, \dots)$. Then 
    $$\bigcup_{t \in \w} \dol y_{i_k} ^{(t)}=l_{i_k}=r_{i_k}=\bigcup_{t \in \w} \dol x_{i_k} ^{(t)} \text{ for } k \leq n,$$ so
    $$z_k=\bigcup_{t \in \w} \dol f_{k}(x_{i_k}^{(t)})=\bigcup_{t \in \w} \dol f_{k}(y_{i_k}^{(t)}) \text{ for } k \leq n.$$
    Since $$q(l_1, l_2, \dots)=(\bigcup_{t \in \w} \dol f_{1}(y_{i_1}^{(t)}), \bigcup_{t \in \w} \dol f_{2}(y_{i_2}^{(t)}), \dots, \bigcup_{t \in \w} \dol f_{n}(y_{i_n}^{(t)}), \bigcup_{t \in \w} \dol f_{n+1}(y_{i_{n+1}}^{(t)}), \dots)=$$ $$(z_1, z_2, \dots, z_n, \bigcup_{t \in \w} \dol f_{n+1}(y_{i_{n+1}}^{(t)}), \dots)$$ we get $(l_1, l_2, \dots) \in q^{-1}[U_{(z_1, z_2, \dots, z_n)} \cap \mathbb{O}(H)]$, which means $$(\{r_1\} \times \{r_2\} \times \dots \times \{r_{i_n}\} \times P_{i_n +1} \times \dots) \cap \mathbb{O(P)} \subset q^{-1}[U_{(z_1, z_2, \dots, z_n)} \cap \mathbb{O}(H)].$$
\end{itemize}
This shows that $q^{-1}[U_{(z_1, z_2, \dots, z_n)} \cap \mathbb{O}(H)]$ is a neighborhood of all its points, which makes it open. Finally $q$ is continuous.

\end{proof}
\section{The category of finite lattices with order and join-irreducible preserving quotient maps}\label{Section_last}
In this section we are going to define a category in which the sequence $\langle\{\O(P_n)\}_{n \in \N},\{\hat{p}^n_k\}_{k<n}\rangle$ of order ideals related to $\langle\{P_n\}_{n \in \N},\{p^k_n\}_{k<n}\rangle$ (the \text{\Fraisse} sequence \eqref{SequanceOfUniwersalProjectivePosets})  is a \text{\Fraisse} sequence itself.
To define that category we need the notion of isomorphic arrows: we say that a pair $(g_1,g_2)$ is the arrow from $f:A \to B$ to $f':A' \to B'$, if we the following diagram
\begin{equation*}
    \begin{tikzcd}
     A \arrow [d, "g_1"] \arrow[r, "f"] & B \arrow[d, "g_2"] \\
     A' \arrow[r, "f'"] & B' \\
    \end{tikzcd}
\end{equation*}
 commutes.  
The composition of these arrows is  component-wise, i.e. $(g_1, g_2)\circ (h_1, h_2)=(g_1 \circ h_1, g_2 \circ h_2)$. 
We say that $(g_1, g_2)$ is an isomorphism, if there are $(h_1, h_2)$, such that $(g_1 \circ h_1, g_2 \circ h_2)=(\id_A, \id_B)$ and $(h_1 \circ g_1, h_2 \circ g_2)=(\id_{A'}, \id_{B'})$.
If there is isomorphism between $f:A \to B$ and $f': A' \to B'$, then we say $f$ and $f'$ are isomorphic. Clearly, if $g_1$ and $g_2$ are isomorphisms, then $(g_1,g_2)$ is also isomorphism.
\begin{thm}\label{Join_preserving_mappings_that_preserve_JIRRE_Are_IQM's}
Let $L,T$ be finite distributive lattices, $p: L \to T$ be a mapping satisfying the following conditions:
\begin{itemize}
    \item[(i)] $p_{|\J(L)}$ is a quotient mapping;
    \item[(ii)] $p(\J(L))=\J(T)$;
    \item[(iii)] $p$ is a quotient mapping that is join-preserving.
\end{itemize}
Then $\hat{p}_0$ is isomorphic to $p$, where $p_0=p_{|\J(L)}$

\end{thm}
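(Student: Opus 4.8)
The plan is to exhibit the two Birkhoff isomorphisms as the components of the desired isomorphism of arrows. First I would set $p_0:=p_{|\J(L)}$; by (i) and (ii) this is a quotient map of the poset $\J(L)$ onto the poset $\J(T)$, so Theorem \ref{Posets_and_O(P)} produces the join-preserving quotient map $\hat p_0:\O(\J(L))\to\O(\J(T))$, which is exactly the arrow we must compare with $p$. I would then take $g_1:=\eta_L$ and $g_2:=\eta_T$ to be the Birkhoff maps $\eta_L(a)=\{x\in\J(L):x\leq a\}$ and $\eta_T(b)=\{y\in\J(T):y\leq b\}$ of Theorem \ref{Birkhoff's_Theorem}; these are lattice isomorphisms, hence order-isomorphisms by Lemma \ref{Lattice_And_Order_Isomorphisms}. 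Since both components are isomorphisms, the remark preceding the theorem guarantees that the pair $(\eta_L,\eta_T)$ is automatically an isomorphism of arrows \emph{once} we check that the square with top edge $p$, bottom edge $\hat p_0$, and vertical edges $\eta_L,\eta_T$ commutes. Thus the whole content of the proof reduces to the single identity $\eta_T\circ p=\hat p_0\circ\eta_L$.

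To verify this identity I would first dispose of the case $a=0_L$: using that $p$ is onto and join-preserving one gets $p(0_L)\vee t=p(0_L\vee a)=p(a)=t$ for every $t\in T$ (choosing $a$ with $p(a)=t$), hence $p(0_L)\leq t$ for all $t$, so $p(0_L)=0_T$; both sides of the identity then return $\emptyset$. For $a\neq 0_L$, I would take the canonical decomposition $\{\dol x_1,\dots,\dol x_n\}$ of the nonempty down-set $\eta_L(a)\in\O(\J(L))$ furnished by Lemma \ref{DownsetLemma2}, so that $x_1,\dots,x_n$ are the maximal join-irreducibles below $a$. Applying $\hat p_0$, using that it preserves unions and sends each principal down-set $\dol x_k$ to $\dol p(x_k)$ (Theorem \ref{Posets_and_O(P)}), yields $\hat p_0(\eta_L(a))=\bigcup_{k}\dol p(x_k)=\{y\in\J(T):y\leq p(x_k)\text{ for some }k\}$. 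On the other side, since $\eta_L$ is a lattice isomorphism with $\eta_L(x_k)=\dol x_k$, injectivity forces $a=\bigvee_k x_k$, so join-preservation of $p$ together with (ii) gives $p(a)=\bigvee_k p(x_k)$ with each $p(x_k)\in\J(T)$. Then $\eta_T(p(a))=\{y\in\J(T):y\leq p(a)\}$ equals $\{y\in\J(T):y\leq p(x_k)\text{ for some }k\}$ by distributivity of $T$ and Lemma \ref{Lemma_On_Irreducible_Elements}. The two descriptions coincide, so the square commutes.

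The only genuinely delicate step is the matching of these two sets of join-irreducibles: it hinges on recovering the purely lattice-theoretic decomposition $a=\bigvee_k x_k$ from Birkhoff's isomorphism, pushing it through the join-preserving map $p$, and then invoking Lemma \ref{Lemma_On_Irreducible_Elements} in the distributive lattice $T$ to convert ``$y$ lies below a join'' into ``$y$ lies below one of the joinands''. Everything else — well-definedness of $\hat p_0$, the zero case, and the principle that two isomorphisms assemble into an isomorphism of arrows — is routine given the results already established.
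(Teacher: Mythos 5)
Your proof is correct and takes essentially the same route as the paper: both exhibit the Birkhoff isomorphisms $\eta_L,\eta_T$ (the paper's $\mathbf{B}_L,\mathbf{B}_T$) as the two components of the arrow isomorphism, reduce the theorem to commutativity of the square $\eta_T\circ p=\hat p_0\circ\eta_L$, and verify it by decomposing a nonzero element as the join of the join-irreducibles coming from the canonical decomposition and pushing that join through $p$. The only cosmetic differences are your derivation of $p(0_L)=0_T$ (via join-preservation and surjectivity, rather than the paper's ``nonzero maps to nonzero, and $p$ is onto'' argument) and your explicit appeal to Lemma \ref{Lemma_On_Irreducible_Elements} to expand $\eta_T\bigl(\bigvee_k p(x_k)\bigr)$ as a union, where the paper gets the same identity by simply invoking that $\mathbf{B}_T$ is a lattice isomorphism.
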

\begin{proof}
We will show that the following diagram 
\begin{equation*}
    \begin{tikzcd}
    T \arrow[d,"\mathbf{B}_T"] & L \arrow[l, "p"'] \arrow[d, "\mathbf{B}_L"] \\
    \O(\J(T)) & \arrow[l, "\hat{p}_0"'] \O(\J(L)) \\
    \end{tikzcd}
\end{equation*}
is commutative,
where $\mathbf{B}_T$ and $\mathbf{B}_L$ are the isomorphisms from Birkhoff's Theorem (Theorem \ref{Birkhoff's_Theorem}).
Take any $x \in \J(L)$. Then $\mathbf{B}_T(p(x))=\dol p(x)$ and $\hat{p}_0(\mathbf{B}_L(x))=\hat{p}_0(\dol x)= \dol p(x)$, so the diagram commutes for join-irreducible elements.

Now let $a\in L\setminus\J(L)$ be non-zero. Then $\mathbf{B}_L(a)=\bigcup_{i=1}^n \dol a_i$, thus $\hat{p}_0(\mathbf{B}_L(a))=\bigcup_{i=1}^n \dol p(a_i)$. See that $\mathbf{B}_T(p(a))=\mathbf{B}_T(p(\mathbf{B}_L^{-1}(\bigcup_{i=1}^n \dol a_i)))=\mathbf{B}_T(p(\bigvee_{i=1}^n a_i))=\bigcup_{i=1}^n 
\mathbf{B}_T(p(a_i))=\bigcup_{i=1}^n \dol p(a_i)$.

Now see that any non-zero elements of $L$ are mapped to non-zero elements of $T$ by $p$, and $p$ is onto, thus $p(\mathbf{0}_L)=\mathbf{0}_T$. Hence $\hat{p}_0(\mathbf{B}(\mathbf{0}_L))=\hat{p}_0(\emptyset)=\emptyset = \mathbf{B}(\mathbf{0}_T)=\mathbf{B}(p(\mathbf{0}_L))$.
This proves that the diagram commutes. Since $\mathbf{B}$ is the isomorphism, $p$ and $\hat{p}_0$ are isomorphic.
\end{proof}

Theorem \ref{Join_preserving_mappings_that_preserve_JIRRE_Are_IQM's} shows that we can identify mappings satisfying conditions (i)--(iii)  with induced quotient mappings. Conditions (i)--(iii) allows us to have the inner definition of induced quotient mappings (up to isomorphism). Using this we define a category of finite distributive lattices with mappings isomorphic to induced quotient mappings. From now on we do not distinguish mappings from that isomorphic to them.

Let $V$ be a countable class of all finite distributive lattices. Formally, one can define $V$ as a set of all order ideals of finite posets on $\N$. Let $A$ be the class of all induced quotient mappings between finite distributive lattices; we say that $f$ is an arrow from $A$ to $B$, provided $f$ is an induced quotient mapping from $B$ onto $A$. Then $\dom f$ is the range of $f$ and $\cod f$ is the domain of $f$.   
The composition $g\bullet f$ of two arrows, $f$ from $A$ to $B$ and  $g$ from $B$ to $C$, is $f \circ g:C\to A$, where $\circ$ is the usual mappings composition. Then $\L:=\langle V,A,\dom,\cod,\bullet \rangle$ is a category.

\subsection{$\O(P_n)$ is a \text{\Fraisse} sequence in $\L$}
Let $\langle \{P_n\}_{n \in \N},\{p^n_k\}_{k < n}\rangle$ be the inverse system defined in Section \ref{UniversalProjectivePoset}. We will prove that $\langle \{\O(P_n)\}_{n \in \N},\{\hat{p}^n_k\}_{k < n}\rangle$ is a \text{\Fraisse} sequence in $\L$.

Let $L=\O(P)$ be a finite, distributive lattice. We know there exists $n \in \N$ and quotient mapping $q:P \to P_n$. Then $\hat{q}$ is an induced quotient mapping from $L$ onto $\O(P_n)$. This shows that the \hyperlink{UProperty}{(U)} condition holds.

Take any $n \in \N$ and an induced quotient mapping $h:L\to \O(P_n)$. There is a quotient $f:\J(L)\to P_n$ with $h=\hat{f}$. 
There is $m>n$ and $g$ such that the following diagram
\begin{equation*}
    \begin{tikzcd}
        \O(P_n) & \arrow[l, "h=\hat{f}"] L=\O(\J(L)) & \arrow[l, "\hat{g}"] \O(P_m) \\
        P_n \arrow[u] & \J(L) \arrow[u] \arrow[l, "f"] & P_m \arrow[u] \arrow[l, "g"] \\
    \end{tikzcd}
\end{equation*}
commutes, where $g \circ f=p_n^m$ and up arrows are canonical poset embeddings  into order ideals. By Lemma \ref{InducedMappingsComposition}, $\hat{g} \circ \hat{f}=\hat{p}_n^m$, which shows property \hyperlink{AProperty}{(A)} and finishes the proof.

\subsection{A different approach to the inverse limit of $\langle \{\O(P_n)\}_{n \in \N},\{\hat{p}^n_k\}_{k < n}\rangle$.}
Let $\langle \{P_n\}_{n \in \N},\{p^n_k\}_{k\leq n}\rangle$ be the inverse system defined in Section \ref{UniversalProjectivePoset}, which inverse limit is $\mathbb{P}$. We know that $P_n$ is isomorphic to $\dot\bigcup_{i<2\cdot 4^{n-1}}\mathbf{2}$. Moreover for any two $x,y\in P_n$ with $x<_n y$, there are $x_i,y_i\in P_{n+1}$, $i=1,2,3,4$, with $x_i<_{n+1} y_i$, and such that
\begin{itemize}
    \item $p_{n}(x_1)=p_n(x_2)=x$, $p_n(y_1)=p_n(y_2)=y$, that is $p_n$ maps both $\mathbf{2}$-components $\{x_1,y_1\}$ and $\{x_2,y_2\}$ onto a $\mathbf{2}$-component $\{x,y\}$;
    \item $p_{n}(x_3)=p_n(y_3)=x$, that is $p_n$ maps a $\mathbf{2}$-component $\{x_3,y_3\}$ onto $x$;
    \item $p_{n}(x_4)=p_n(y_4)=y$, that is $p_n$ maps a $\mathbf{2}$-component $\{x_4,y_4\}$ onto $y$.
\end{itemize}
Note that exactly those properties of $\langle (P_n),(p^n_k)_{k\leq n}\rangle$ where needed to prove that it is a Fraisse sequence in category of finite posets, see Theorem ... and its proof.  

Let $T_n=\{x\in\{0,1,2,3\}^n:x(0)=0,1\}$. Note that $\vert T_n\vert = 2\cdot 4^{n-1}$ is the same as the number of $\mathbf{2}$-components of $P_n$. We use $T_n$ to represent $\mathbf{2}$-components of $P_n$ in the following way. $T_1=\{0,1\}$ and 0 and 1 represents two $\mathbf{2}$-components of $P_1$. For $c\in T_n$ let $\{x,y\}$ with $x<y$ be a $\mathbf{2}$-component of $P_n$. Then $c\hat{\;\;}0$ and $c\hat{\;\;}1$ represent those two $\mathbf{2}$-components of $P_{n+1}$ which are mapped onto $\{x,y\}$, $c\hat{\;\;}2$ represents $\mathbf{2}$-component of $P_{n+1}$ which is mapped onto $x$, and $c\hat{\;\;}3$ represents $\mathbf{2}$-component of $P_{n+1}$ which is mapped onto $y$.

Now, let us represent $\mathcal{O}(P_n)$ as $\{0,1,2\}^{T_n}$. The idea is the following. Any down-set can be represent as a union of principal down-sets. Any principal down-set in $P_n$ is of the form $\{x\}$ or $\{x,y\}$ where $c:=\{x,y\}$ with $x<y$ is a $\mathbf{2}$-component. Now, a down-set $A\in\mathcal{O}(P_n)$ can be represent as a function $f:T_n\to\{0,1,2\}$ such that $f(c)=2$ is $A$ contains $\{x,y\}$, $f(c)=1$ if $A$ contains $\{x\}$ but does not $\{x,y\}$, and $f(c)=0$ if $A$ does not contain $\{x\}$.    

For every $f,g \in \{0, 1, 2\}^{T_n}$ we define $(f \vee g)(c)=\max\{f(c), g(c)\}$ and $(f \wedge g)(c)=\min\{f(c), g(c)\}$ for every $c\in T_n$. Clearly $\{0, 1, 2\}^{T_n}$ is a lattice, with $\vee$ and $\wedge$ as join and meet, respectively.

Note that the set of all join-irreducible elements $\J(\{0, 1, 2\}^{T_{n}})$ in $\{0, 1, 2\}^{T_n}$ consists of those $f\in \{0, 1, 2\}^{T_{n}}$ for which there is exactly one $c\in T_{n}$ with $f(c)\neq 0$; consequently $f(c) \in \{1, 2\}$.  
For given $f \in \J(\{0, 1, 2\}^{T_{n}})$ 
we denote the unique $c\in T_{n}$ with $f(c)\neq 0$ by $c_f$. We also say that $f$ is supported by $c$ if $c=c_f$.

The following fact formalizes the connection between $\O(P_n)$ and $\{0, 1, 2\}^{T_m}$.
\begin{lem}
	Let $\psi:\mathcal{O}(\dot\bigcup_{\mathbf{x} \in T_n}\mathbf{2}) \to \{0, 1, 2\}^{T_n}$ be defined step by step as follows
	\begin{itemize}
	\item $\psi(\emptyset)=0$ (zero function)
	
	\item For every $\mathbf{2}$-component $\{x, y\}$ with $x < y$ and $c_0\in T_n$ which represents that  $\mathbf{2}$-component we put 
	\[\psi( \dol x)(c)= \begin{cases}
0, & c \neq c_0 \\
1, & c=c_0, 				
\end{cases}\;\;\text{ and }\;\;
\psi( \dol y)(c)= \begin{cases}
0, & c \neq c_0 \\
2, & c=c_0, 				
\end{cases}
\]
	
	\item For every non-empty join-irreducible element $A\in\mathcal{O}(\dot\bigcup_{\mathbf{x} \in T_n}\mathbf{2})$ and its canonical decomposition $\{\dol a_i: i<k\}$ we define $\psi(A)=\bigvee_{i=1} ^k \psi(\dol a_i)$
	\end{itemize}
	Then $\psi$ is a lattice isomorphism.
\end{lem}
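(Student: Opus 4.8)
The plan is to identify $\psi$ with the natural component-wise coding bijection and then invoke Lemma \ref{Lattice_And_Order_Isomorphisms}, which reduces checking that $\psi$ is a lattice isomorphism to checking that it is an order-isomorphism. Write $P:=\dot\bigcup_{c\in T_n}\mathbf{2}$, and for each $c\in T_n$ let $x_c<y_c$ be the two elements of the $\mathbf{2}$-component indexed by $c$. Since the components are pairwise incomparable, a subset $A\subseteq P$ is a down-set if and only if for every $c$ the intersection $A\cap\{x_c,y_c\}$ equals one of $\emptyset$, $\{x_c\}$, $\{x_c,y_c\}$ (the only excluded possibility, $\{y_c\}$, fails downward closure). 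This lets me define $\Phi:\O(P)\to\{0,1,2\}^{T_n}$ by letting $\Phi(A)(c)$ be $0$, $1$, or $2$ according to which of these three cases occurs. I claim $\psi=\Phi$.

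To prove $\psi=\Phi$, first note that $\psi$ is well defined: by Lemma \ref{DownsetLemma2} every non-empty down-set has a \emph{unique} canonical decomposition, so the third clause of the definition is unambiguous. I then compare the maps clause by clause. On $\emptyset$ both give the zero function. On the principal down-sets $\dol x_c=\{x_c\}$ and $\dol y_c=\{x_c,y_c\}$ the function $\Phi$ is supported only on $c$ with value $1$ and $2$ respectively, matching $\psi(\dol x_c)$ and $\psi(\dol y_c)$. For a general non-empty $A$ with canonical decomposition $\{\dol a_i:i<k\}$ I must show $\Phi(A)=\bigvee_{i}\psi(\dol a_i)$, where the join in $\{0,1,2\}^{T_n}$ is the pointwise maximum. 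Fixing $c$, the set $A\cap\{x_c,y_c\}$ is the union of the sets $\dol a_i\cap\{x_c,y_c\}$, each of which lies in the three-element chain $\emptyset\subset\{x_c\}\subset\{x_c,y_c\}$; the union of members of a chain is their largest member, and passing to codes this reads $\Phi(A)(c)=\max_i\Phi(\dol a_i)(c)=(\bigvee_i\psi(\dol a_i))(c)$. Hence $\psi=\Phi$.

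It remains to show $\Phi$ is an order-isomorphism. For bijectivity I exhibit the inverse: send $f\in\{0,1,2\}^{T_n}$ to the set containing $x_c$ whenever $f(c)\geq 1$ and $y_c$ whenever $f(c)=2$; this is a down-set, and the two assignments are mutually inverse. For the order condition, because down-sets are determined component-wise, $A\subseteq B$ holds if and only if $A\cap\{x_c,y_c\}\subseteq B\cap\{x_c,y_c\}$ for every $c$; since the coding $\emptyset\mapsto 0$, $\{x_c\}\mapsto 1$, $\{x_c,y_c\}\mapsto 2$ is order-preserving along the chain above, this is equivalent to $\Phi(A)(c)\leq\Phi(B)(c)$ for all $c$, i.e.\ $\Phi(A)\leq\Phi(B)$. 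Thus $\Phi=\psi$ is an order-isomorphism, and Lemma \ref{Lattice_And_Order_Isomorphisms} yields that it is a lattice isomorphism.

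The only genuinely delicate point is the general-element computation, namely that forming set-theoretic unions of down-sets corresponds to taking the pointwise maximum of their codes. This hinges entirely on the component structure of $P$: because each component is a single $\mathbf{2}$, the restrictions of down-sets to a fixed component form a chain of length three, and on a chain the union of a family coincides with its supremum, which is exactly what the coordinatewise $\max$ records. Everything else is a routine unwinding of definitions.
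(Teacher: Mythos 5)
Your proof is correct, but it takes a genuinely different route from the paper's. The paper works intrinsically with $\psi$ as defined: it first proves join-preservation by a case analysis on principal down-sets ($\dol x\subsetneq\dol y$ versus $\dol x\parallel\dol y$) followed by induction over canonical decompositions; this gives that $\psi$ is order-preserving. It then proves order-reflection by reducing to principal down-sets and invoking Lemma \ref{Lemma_On_Irreducible_Elements} (the characterization of join-irreducibles in a distributive lattice) to match each $\dol a_i$ in the decomposition of $A$ with some $\dol b_{j_i}$ in that of $B$; injectivity then follows from preserving order both ways, and surjectivity is shown by decomposing an arbitrary $f\in\{0,1,2\}^{T_n}$ into join-irreducibles and lifting each one to a principal down-set. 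You instead identify $\psi$ with the explicit componentwise coding bijection $\Phi$, exhibit its inverse, and verify the order condition coordinate by coordinate. Your route is more elementary: it needs neither Lemma \ref{Lemma_On_Irreducible_Elements} nor Lemma \ref{CanonicalDecompositionLemma}, because it fully exploits the special structure of $\dot\bigcup_{c\in T_n}\mathbf{2}$ --- down-sets are determined by their traces on the $\mathbf{2}$-components, and those traces form three-element chains on which union coincides with maximum. What the paper's argument buys in exchange is generality of template: it mirrors the proof of Theorem \ref{Posets_and_O(P)} and would survive for order ideals of posets without this product structure, whereas your componentwise analysis is tied to disjoint unions of $\mathbf{2}$'s. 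A small bonus of your write-up is that you make explicit why $\psi$ is well defined (uniqueness of the canonical decomposition, Lemma \ref{DownsetLemma2}), a point the paper leaves implicit; both proofs share the final reduction to order-isomorphisms via Lemma \ref{Lattice_And_Order_Isomorphisms}.
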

We will call $\psi$ the canonical isomorphism between $\mathcal{O}(\dot\bigcup_{\mathbf{x} \in T_n}\mathbf{2})$ and $\{0, 1, 2\}^{T_n}$.
\begin{proof}
By Lemma \ref{Lattice_And_Order_Isomorphisms} we only need to show that $\psi$ is an order-isomorphism.

Firstly we show that it is join-preserving.

See that for every $x,y \in \dot\bigcup_{\mathbf{x} \in T_n}\mathbf{2}$ we have that $\psi(\dol x \cup \dol y)=\psi(\dol x) \vee \psi(\dol y)$.
To see it we need to consider the following two cases:
\begin{itemize}
	\item If $\dol x \subsetneq \dol y$ then $x <_n y$ by Lemma \ref{DownsetLemma1}. Then $\psi(\dol x \cup \dol y)=\psi( \dol y)=g$ and $g(c_g)=2$. See that $\psi(\dol x)=f$ where $f$ is such that $f(c_g)=1$ and $f(c)=0$ for remaining $c$'s. Then $f \vee g=g$, and consequently $\psi(\dol x \cup \dol y)=g=f \vee g=\psi(\dol x) \vee \psi(\dol y)$.
	\item If $\dol x \parallel \dol y$, then immediately from the definition of $\psi$ we have $\psi(\dol x \cup \dol y)=\psi(\dol x ) \vee \psi(\dol y)$.
\end{itemize}
By induction it can be proven that $\psi(\bigcup_{i=1} ^k \dol x_i)=\bigvee_{i=1} ^k \psi(\dol x_i)$. This implies that $\psi(A \cup B)= \psi(A) \vee \psi(B)$ (cf. proof of Theorem \ref{Posets_and_O(P)}). Thus $\psi$ is join-preserving. Consequently $A\subset B$ implies $\psi(A)\leq\psi(B)$   for any $A,B\in \O(\dot\bigcup_{\mathbf{x} \in T_n}\mathbf{2})$.

Now we will prove that $\psi(A)\leq\psi(B)$ implies $A\subset B$ for any $A,B\in \O(\dot\bigcup_{\mathbf{x} \in T_n}\mathbf{2})$.
We start proving it for principal down-sets. Let $x ,y \in P_n$ be such that $\psi(\dol x) \leq \psi(\dol y)$.
Let $f=\psi(\dol x),g=\psi(\dol y)$. Then $f \leq g$  means that $c_f=c_g$. Thus $x, y$ are in one $\mathbf{2}$-component of $P_n$. Since $f(c_f) \leq g(c_g)$, then $x \leq_{P_n} y$. Consequently $\dol x \subset \dol y$ by Lemma \ref{DownsetLemma1}.

Now we consider the general case. Let $A=\bigcup_{i=1} ^k \dol a_i, B=\bigcup_{i=1} ^l \dol b_i \in \O(\dot\bigcup_{\mathbf{x} \in T_n}\mathbf{2})$ and assume that $\bigvee_{i=1}^k \psi(\dol a_i)=\psi(A) \leq \psi(B)=\bigvee_{i=1}^l \psi(\dol b_i)$. By Lemma \ref{Lemma_On_Irreducible_Elements} for every $\dol a_i$ we can find $\dol b_{j_i}$ such that $\psi(\dol a_i) \leq \psi(\dol b_{j_i})$. Then $\dol a_i \subset \dol b_{j_i}$ for every $i \in \{1, \dots, k\}$. Thus $A \subset B$. 

The fact that $\psi$ is an injection follows from preserving order both ways.

Now we prove that $\psi$ is surjective.
Take any $f \in \{0, 1, 2\}^{T_n}$. Then $f=\bigvee_{i=1} ^k f_i$ where $f_1, \dots, f_k \in \J(\{0, 1, 2\}^{T_n})$. For every $f_i$ we can find $\dol x_i$ such that $x_i$ is in $\mathbf{2}$-component represented by $c_{f_i}$. Depending on value of $f(c_{f_i})$ we take $x_i$ to be lower or upper element of $\mathbf{2}$-component $c_{f_i}$. Then $\psi(\dol x_i)=f_i$. Therefore $\psi(\bigcup_{i=1} ^k \dol x_i)=\bigvee_{i=1} ^k f_i=f$.
\end{proof}
 Let $\psi$ be the canonical isomorphism between $\O(P_n)$ and $\{0, 1, 2\}^{T_n}$, and $\phi$ be the canonical isomorphism between $\O(P_{n+1})$ and $\{0, 1, 2\}^{T_{n+1}}$.

We will define $q_n ^{n+1}:\{0,1,2\}^{T_{n+1}}\to\{0,1,2\}^{T_{n}}$ such that diagram
\begin{equation}\label{Different_Form_Of_IQM}
\begin{tikzcd}
\O(P_n) \arrow[d, "\psi"]  & \O(P_{n+1}) \arrow[l, "\widehat{p}_n ^{n+1}"']  \arrow[d, "\phi"] \\
\{0, 1, 2\}^{T_n}					& \{0, 1, 2\}^{T_{n+1}}  \arrow[l,"q_n ^{n+1}"]  \\
\end{tikzcd}
\end{equation}
commutes.
As $\mathcal{O}(P_n)$ is isomorphic to $\{0,1,2\}^{T_n}$, we can express every non-zero, non join-irreducible element of $\{0,1,2\}^{T_n}$ as a join of join-irreducible, non-comparable elements of $\{0,1,2\}^{T_n}$. We only need to define $q_n ^{n+1}$ for join-irreducible elements of $T_{n+1}$ and expand it using the aforementioned property. 
Let $f \in \J(\{0, 1, 2\}^{T_{n+1}})$. Then $f(c)\neq 0\iff c=c_f$. We define $q_n ^{n+1}(f)=g$ where 
\[g({c_f}\upharpoonright n)= \begin{cases}
f(c_f) & \text{ if } c_f(n) \in \{0, 1\} \\
1 & \text{ if } c_f(n)=2\\
2 & \text{ if } c_f(n)=3
\end{cases}\;\;\text{ and }\;\;g(c)=0\iff c\neq c_f\upharpoonright n.
\]
Then $g \in \J(\{0, 1, 2\}^{T_{n}})$.
Additionally, for $f \equiv 0$ we put $q_n ^{n+1}(f) \equiv 0 \in \{0, 1, 2\}^{T_n}$.

In general, for $f \in \{0 ,1, 2\}^{T_{n+1}}$ we have \[f=\bigvee_{c\in T_{n+1}} f_c=\max_{c\in T_{n+1}}f_c\] where $f_c$ is join-reducible supported by $c$. This implies that 
$$q_n ^{n+1}(f)(c)=\max \big\{f(c\hat{\;\;}0), f(c\hat{\;\;}1), \min \{f(c\hat{\;\;} 2), 1\}, 2\cdot\min\{f(c\hat{\;\;}3), 1\}\big\}$$
for every $f\in\{0,1,2\}^{T_n}$ and $c\in T_n$.

We have proven the following.

\begin{prop}
The lattice $\mathbb{O(P)}$ is isomorphic to the inverse limit of inverse system $\langle\{\{0,1,2\}^{T_n}\}_{n \in \N},\{q^n_k\}_{k<n}\rangle$.
\end{prop}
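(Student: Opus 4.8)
The plan is to assemble the level-wise canonical isomorphisms into a single coordinatewise map and then invoke the standard principle that an isomorphism of inverse sequences induces an isomorphism of their limits. Write $\psi_n\colon\O(P_n)\to\{0,1,2\}^{T_n}$ for the canonical isomorphism of the preceding lemma (applied to the copy $P_n\cong\dot\bigcup_{i<2\cdot 4^{n-1}}\mathbf{2}$), and define
\[\Theta\colon\mathbb{O}(P)\longrightarrow\varprojlim\langle\{\{0,1,2\}^{T_n}\}_{n\in\N},\{q^n_k\}_{k<n}\rangle,\qquad \Theta(a_1,a_2,\dots)=(\psi_1(a_1),\psi_2(a_2),\dots).\]
I will show that $\Theta$ is a well-defined bijection which preserves and reflects the (coordinatewise) order, and conclude by Lemma \ref{Lattice_And_Order_Isomorphisms} that it is a lattice isomorphism.

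The essential ingredient is the commutativity of diagram \eqref{Different_Form_Of_IQM}, i.e. $\psi_n\circ\hat{p}_n^{n+1}=q_n^{n+1}\circ\psi_{n+1}$ for every $n$. First I extend this to arbitrary $k<n$: since $\hat{p}_k^n=\hat{p}_k^{k+1}\circ\dots\circ\hat{p}_{n-1}^n$ by Lemma \ref{InducedMappingsComposition} and $q_k^n=q_k^{k+1}\circ\dots\circ q_{n-1}^n$ by definition, gluing the one-step squares via Observation \ref{TwoCommutingDiagrams} yields $\psi_k\circ\hat{p}_k^n=q_k^n\circ\psi_n$. Now for $(a_1,a_2,\dots)\in\mathbb{O}(P)$ we have $\hat{p}_n^{n+1}(a_{n+1})=a_n$, so $q_n^{n+1}(\psi_{n+1}(a_{n+1}))=\psi_n(\hat{p}_n^{n+1}(a_{n+1}))=\psi_n(a_n)$; hence $\Theta(a_1,a_2,\dots)$ indeed lands in the target inverse limit.

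Injectivity of $\Theta$ is immediate from the injectivity of each $\psi_n$. For surjectivity, given a thread $(b_n)_n$ in the target limit, set $a_n:=\psi_n^{-1}(b_n)$; the same commuting square, together with $q_n^{n+1}(b_{n+1})=b_n$ and the injectivity of $\psi_n$, forces $\hat{p}_n^{n+1}(a_{n+1})=a_n$, so $(a_n)_n\in\mathbb{O}(P)$ and $\Theta(a_n)=(b_n)$. Finally, the orderings on both limits are coordinatewise (the relation $E$ of the Preliminaries), and each $\psi_n$ is an order-isomorphism; therefore $(a_n)\le(a'_n)$ iff $a_n\subset a'_n$ for all $n$ iff $\psi_n(a_n)\le\psi_n(a'_n)$ for all $n$ iff $\Theta(a_n)\le\Theta(a'_n)$. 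Thus $\Theta$ is an order-isomorphism, and since $\mathbb{O}(P)$ is a lattice its order-isomorphic image is a lattice as well, so Lemma \ref{Lattice_And_Order_Isomorphisms} upgrades $\Theta$ to a lattice isomorphism.

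There is no genuine obstacle left at this stage: the substance---that each $\O(P_n)$ is isomorphic to $\{0,1,2\}^{T_n}$ and that these isomorphisms intertwine $\hat{p}_n^{n+1}$ with $q_n^{n+1}$---was already established in the canonical-isomorphism lemma and in diagram \eqref{Different_Form_Of_IQM}. The only points demanding care are bookkeeping the two directions of the order equivalence and promoting the single-step commuting square to all $k<n$. I would also remark in passing that, because $\Theta$ acts coordinatewise between profinite spaces with finite discrete factors, it is simultaneously a homeomorphism, so the isomorphism is in fact topological.
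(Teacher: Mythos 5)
Your proposal is correct and follows essentially the same route as the paper: the paper's ``proof'' of this proposition is precisely the preceding canonical-isomorphism lemma together with the definition of $q_n^{n+1}$ making diagram \eqref{Different_Form_Of_IQM} commute, after which it simply declares the result proven, leaving implicit the coordinatewise assembly $\Theta(a_1,a_2,\dots)=(\psi_1(a_1),\psi_2(a_2),\dots)$ that you carry out explicitly. Your write-up just fills in the routine verification (well-definedness, bijectivity, order-preservation in both directions, and the upgrade to a lattice isomorphism via Lemma \ref{Lattice_And_Order_Isomorphisms}) that the paper omits.
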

We hope that this representation of $\mathbb{O(P)}$ will allow to study its properties in future.

\bibliography{bibliografia.bib}
\bibliographystyle{siam}

\end{document}